\newcommand{\Tr}{\mathrm{Tr}} 
\numberwithin{equation}{section}
\newcounter{count}
\newcommand{\num}{\stepcounter{count}\the\value{count}}
\renewcommand{\mod}{\ \mathrm{mod}\ }
\newtheorem{theorem}{Theorem}[section]
\newtheorem{lemma}[theorem]{Lemma}
\newtheorem{corollary}[theorem]{Corollary}
\newtheorem{proposition}[theorem]{Proposition}
\newtheorem{question}[theorem]{Question}
\theoremstyle{remark}
\newtheorem{remark}[theorem]{Remark}
\theoremstyle{definition}
\newtheorem{notation}[theorem]{Notation}
\begin{document}

\title[Transcendency of Mills-type function]{Transcendency of variants of Mills' constant}

\author[K. Saito]{Kota Saito}
\address{Kota Saito\\ Department of Mathematics\\ College of Science $\&$ Technology \\ Nihon University\\Kanda\\ Chiyoda-ku\\ Tokyo\\
101-8308\\ Japan} 
\email{saito.kota@nihon-u.ac.jp}

\thanks{The previous address of the author is  ``Faculty of Pure and Applied Sciences, University of Tsukuba, 1-1-1 Tennodai, Tsukuba, Ibaraki, 305-8577, Japan'' }

\subjclass[2020]{11J72, 11J81}
\keywords{prime-representing constants, Mills' constant, transcendental number theory}

\begin{abstract} Let  $\lfloor x\rfloor$ denote the integer part of $x$. For every sequence $(C_k)_{k\ge 1}$ of positive integers, we define $\xi(C_k)$ as the smallest real number $\xi>1$ such that $\lfloor \xi^{C_k} \rfloor$ is a prime number for every positive integer $k$. The number $\xi(3^k)$ is called Mills' constant. Recently, the author showed that $\xi(3^k)$ is irrational; however, the transcendency remains open. In this paper, we show that Mills' constant is transcendental under the Density Hypothesis of the Riemann zeta function. Furthermore, we obtain four classes of sequences $(C_k)_{k\ge 1}$ for which we can verify the arithmetic properties of $\xi(C_k)$. For simplicity, we give four representative examples belonging to each class:
\begin{itemize}
\item[(A)] $\xi(\lfloor b^k\rfloor)$ is irrational for every real number $b\geq 1+\sqrt{2}$;  
\item[(B)] $\xi((1+\sqrt{2})^k+(1-\sqrt{2})^k)$ is transcendental; 
\item[(C)] $\xi(r3^k-1)$ is transcendental for every integer $r\geq 4.003\times 10^{14}$;
\item[(D)] $\xi(3^{k-\lfloor (\log k)^{1/2} \rfloor}2^{\lfloor (\log k)^{1/2}\rfloor})$ is transcendental. 
\end{itemize}
\end{abstract} 

\maketitle

\section{Introduction}\label{Section:Intro}
Let $\mathbb{N}$ be the set of all positive integers. For all real numbers $x$, we define $\lfloor x\rfloor$ as the greatest integer less than or equal to $x$. In 1947, Mills \cite{Mills} constructed a real number $A>1$ such that 
\begin{equation}\label{Condition:Mills}
\text{$\lfloor A^{3^k} \rfloor$ is a prime number for all $k\in \mathbb{N}$}. 
\end{equation}
After that, mathematicians started to study variants and generalisations of Mills' result. For example, Wright \cite{Wright51} found a real number $\mu>1$ such that all of 
\[
\lfloor 2^\mu \rfloor, \quad \lfloor 2^{2^\mu} \rfloor, \quad  \lfloor 2^{2^{2^\mu}} \rfloor, \quad \cdots
\]
are prime numbers. We refer the readers to  \cite{Dudley} which is a good survey for quickly reviewing early works on prime-representing functions.  

The smallest real number $A$ satisfying \eqref{Condition:Mills} exists, and it is called Mills' constant (see \cite{Finch}). We will prove the existence of such a number in Lemma~\ref{Lemma:existence}. Assuming the Riemann Hypothesis (for short RH), Caldwell and Cheng \cite{CaldwellCheng} gave a decimal approximation to Mills' constant as follows: 
\[
1.30637\: 78838\: 63080\: 69046\: 86144\: 92602\cdots.
\]
Proving the irrationality of this number turned out to elude some previous efforts by Alkauskas and Dubickas \cite{AlkauskasDubickas} and by the author and Takeda \cite{SaitoTakeda}. However, the author \cite[Theorem~1.2]{Saito25} recently resolved that Mills' constant is irrational. Furthermore, the author \cite[Proposition~5.2]{Saito25} gave a sufficient condition to obtain the transcendency as follows.   
\begin{proposition}\label{Proposition:sufficient}
Assume that there exists a real number $\theta\in(0,1/2)$ such that for every sufficiently large real number $x$, we find a prime number $p$ satisfying $x\leq p \leq x+x^\theta$. Then, Mills' constant is transcendental.   
\end{proposition}

Under RH, Carneiro, Milinovich, and Soundararajan \cite[Theorem~1.5]{CMS} showed that  for all real numbers $x\geq 4$, there exists a prime number $p$ such that 
\begin{equation}\label{Equation:CMS}
p\in \left[x,x+\frac{22}{25}\sqrt{x}\log x\right]. 
\end{equation}
Therefore, even if RH is true, Proposition~\ref{Proposition:sufficient} cannot imply the transcendency of Mills' constant. Thus, in \cite[Question~5.3]{Saito25}, the author asked the following question. 

\begin{question}\label{Question:under_RH}
Assuming the Riemann hypothesis, is Mills' constant transcendental?
\end{question}

In this paper, we reveal that the answer is ``YES''.  Let us prepare several notations to present more general and precise results. Let $(C_k)_{k\ge 1}$ be a positive real sequence. We always write $c_1=C_1$ and $c_{k+1}=C_{k+1}/C_k$ for all $k\in \mathbb{N}$. Therefore, we have
\[
C_k=c_1 c_2\cdots c_k
\]
for all $k\in \mathbb{N}$. Let $\mathcal{P}$ be the set of all prime numbers. We define 
\[
\mathcal{W}(C_k)=\mathcal{W}((C_k)_{k\ge 1})=\{A>1\colon \lfloor A^{C_k} \rfloor\in \mathcal{P} \text{  for every $k\in \mathbb{N}$}   \}.
\]
For simplicity, we usually use the notation $\mathcal{W}(C_k)$ instead of $\mathcal{W}((C_k)_{k\ge 1})$, and so we remark that $\mathcal{W}(C_k)$ is independent of $k$. For instance, if $(C_k)_{k\ge 1}=(3^k)_{k\ge 1}$, then
\[
\mathcal{W}(3^k)= \{A >1\colon \lfloor A^{3^k} \rfloor\in \mathcal{P} \text{  for every $k\in \mathbb{N}$}\}. 
\]
By \cite[Lemma~3.1]{Saito25},  if $\mathcal{W}(C_k)$ is non-empty, then the smallest element of $\mathcal{W}(C_k)$ exists, say $\xi(C_k)$. Essentially by Matom\"{a}ki \cite[Theorem~3]{Matomaki}, we have the sufficient condition that $\mathcal{W}(C_k)$ is non-empty (\textit{i.e.} $\xi(C_k)$ exists) as follows.

\begin{theorem}\label{Theorem:Matomaki}
Let $(C_k)_{k\ge 1}$ be a positive real sequence satisfying 
\begin{enumerate}\renewcommand{\theenumi}{\arabic{enumi}}
\renewcommand{\labelenumi}{(\theenumi)} 
\item \label{Matomaki1} $c_1>0$\textup{;} 
\item\label{Matomaki2} $c_{k+1}\geq 2$ for all $k\in \mathbb{N}$.
\end{enumerate}
Then, $\mathcal{W}(C_k)$ is non-empty and $\xi(C_k)$ exists.
\end{theorem}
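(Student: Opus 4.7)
The plan is to view Theorem~\ref{Theorem:Matomaki} as a mild extension of Matom\"aki's Theorem~3, whose original version treats the case $c_1\geq 2$, and then reduce the remaining case $0<c_1<2$ by a scaling argument, in line with the footnote. So my first task would be to recall or reproduce the argument in the case $c_1\geq 2$, and my second task would be to carry out the normalisation reducing $0<c_1<2$ to this case.

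For the case $c_1 \geq 2$, the route is a standard Mills-type nested-interval construction. I would build inductively primes $p_k$ and half-open intervals $I_k=[u_k,v_k)$ with $u_k=p_k^{1/C_k}$ and $v_k\leq (p_k+1)^{1/C_k}$ in such a way that $\lfloor A^{C_k}\rfloor=p_k$ for every $A\in I_k$ and $I_{k+1}\subseteq I_k$. The inductive step requires producing a prime $p_{k+1}$ inside
\[
[u_k^{C_{k+1}},\, v_k^{C_{k+1}}) \subseteq [p_k^{c_{k+1}},\, (p_k+1)^{c_{k+1}}),
\]
an interval of length on the order of $p_k^{c_{k+1}-1}$. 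For $c_{k+1}=3$ this length is of size $p_k^{2}$ and is covered by Ingham-type prime-gap bounds, but for $c_{k+1}$ as small as $2$ one is in the Legendre regime, where unconditional prime-gap results are insufficient. This is the main obstacle of the proof, and it is precisely the point that Matom\"aki resolves: by permitting a modest shrinking of $v_k$ at each step one can steer the induction into intervals in which a prime is guaranteed by her results on primes in almost all short intervals, yielding the desired $p_{k+1}$. Once $(p_k,I_k)_{k\ge 1}$ is built, $(u_k)$ is non-decreasing, $(v_k)$ is non-increasing with $v_k-u_k\to 0$, and the common limit $A:=\lim_k u_k$ satisfies $\lfloor A^{C_k}\rfloor=p_k$ for every $k$, so $A\in\mathcal{W}(C_k)$.

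To reduce the case $c_1\in(0,2)$ to the one just treated, pick any $\tilde c_1\geq 2$ and set $C_k':=(\tilde c_1/c_1)\,C_k$. Its ratios are $c_1'=\tilde c_1\geq 2$ and $c_{k+1}'=c_{k+1}\geq 2$, so the previous step provides some $A\in\mathcal{W}(C_k')$. Defining $B:=A^{\tilde c_1/c_1}>1$, one has $B^{C_k}=A^{C_k'}$, hence $\lfloor B^{C_k}\rfloor$ is prime for every $k\in\mathbb{N}$ and $B\in\mathcal{W}(C_k)$. Finally, since $\mathcal{W}(C_k)$ is non-empty, the existence of the smallest element $\xi(C_k)$ follows from \cite[Lemma~3.1]{Saito25}, completing the proof.
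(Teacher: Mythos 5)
Your proposal is correct and takes essentially the same nested-interval approach as the paper: construct primes $p_k$ satisfying $p_k^{c_{k+1}} \leq p_{k+1} < (p_k+1)^{c_{k+1}} - 1$ by invoking Matom\"{a}ki's short-interval prime results, take $A = \lim_{k\to\infty} p_k^{1/C_k}$, and then apply \cite[Lemma~3.1]{Saito25} for the smallest element. The only minor divergence is in handling $c_1$: you normalise it by rescaling $C_k' = (\tilde c_1/c_1)C_k$ and $B = A^{\tilde c_1/c_1}$ (exactly the observation of \cite[Remark~3.4]{SaitoTakeda} that the paper's footnote cites), whereas the paper proves the statement directly for all $c_1>0$ by applying Lemma~\ref{Lemma-Matomaki3}, in which the starting prime $p_1$ is located in an interval $[X, X+X^\eta]$ found via the prime number theorem and a pigeonhole decomposition, so $c_1$ never enters the construction and no rescaling step is required.
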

 
\begin{remark}
Actually, Matom\"{a}ki showed that $\mathcal{W}(C_k)$ is uncountable, nowhere dense, and has Lebesgue measure $0$ if a sequence $(c_k)_{k\ge 1}$ of real numbers satisfies $c_1\geq 2$ and \eqref{Matomaki2}. There is no essential difference between the initial conditions $c_1>0 $ and $c_1\geq 2$ (see \cite[Remark~3.4]{SaitoTakeda}). 
\end{remark} 
 
It is natural to ask whether $\xi(C_k)$ is irrational or not. If the number is irrational, then we should further ask whether it is transcendental or not. A Pisot number plays a key role in answering the question.  We say that a real algebraic integer $\beta>1$ is a \textit{Pisot number} if all conjugates of $\beta$ over $\mathbb{Q}$ lie in the open unit disk of $\mathbb{C}$ except for itself. An algebraic number is  \textit{quadratic} or \textit{cubic} if its degree is $2$ or $3$, respectively.  An algebraic number is \textit{totally real} if all its Galois conjugates over $\mathbb{Q}$ are real. 

For example, the golden ratio is a totally real quadratic Pisot number. The unique real root of $X^3-X-1$ is a cubic Pisot number. In \cite[Theorem~1.5]{Saito25}, the author proved the following result. 

\begin{theorem}\label{Theorem:Previous}
Let $(C_k)_{k\ge 1}$ be a positive real sequence satisfying  \eqref{Matomaki1}, \eqref{Matomaki2} in Theorem~\ref{Theorem:Matomaki}, and $c_{k}\in \mathbb{N}$ for all $k\in \mathbb{N}$. Then, the smallest real number of $\mathcal{W}(C_k)$ exists, say $\xi$. If $\limsup_{k\to \infty} c_{k+1}\geq 4$, then $\xi$ is transcendental.  If $\limsup_{k\to \infty} c_{k+1}=3$, then either $\xi$ is transcendental, or there exists $m\in \mathbb{N}$ such that  $\xi^{C_m}$ is a cubic Pisot number. 
\end{theorem}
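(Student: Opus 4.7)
The plan is to argue by contradiction: suppose $\xi$ is algebraic, and show this forces $\limsup_{k\to\infty} c_{k+1} \leq 3$, with cubic-Pisot structure in the boundary case. By the earlier irrationality theorem (Theorem~1.2 of \cite{Saito25}), $\xi$ has degree $d \geq 2$ over $\mathbb{Q}$; let $\xi = \xi_1, \ldots, \xi_d$ be its Galois conjugates and $a \in \mathbb{N}$ the leading coefficient of the minimal polynomial, so that $a\xi$ is an algebraic integer. Write $\alpha_k := \xi^{C_k}$, $p_k := \lfloor \alpha_k\rfloor$, and $\epsilon_k := \alpha_k - p_k \in (0,1)$. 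The identity $\alpha_{k+1} = \alpha_k^{c_{k+1}}$, combined with the integrality of $p_k$, $c_{k+1}$, $p_{k+1}$ and the binomial theorem, yields the exact recursion
\[
\epsilon_{k+1} = \sum_{i=1}^{c_{k+1}}\binom{c_{k+1}}{i}p_k^{c_{k+1}-i}\epsilon_k^{i} - R_k, \qquad R_k := p_{k+1} - p_k^{c_{k+1}} \in \mathbb{Z}_{\geq 0}.
\]

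For the Liouville-type lower bound, observe that $a^{C_k}\alpha_k$ is an algebraic integer, so $a^{C_k}\epsilon_k = a^{C_k}\alpha_k - a^{C_k}p_k$ is a \emph{nonzero} algebraic integer (nonzero by irrationality of $\xi$); hence $|N(a^{C_k}\epsilon_k)| \geq 1$. Bounding each conjugate $|a^{C_k}(\xi_j^{C_k} - p_k)|$ by $a^{C_k}(|\xi_j|^{C_k} + \xi^{C_k})$ yields $\epsilon_k \geq \eta^{-C_k}$ for some constant $\eta = \eta(\xi) > 1$ controlled by $a$, $d$, and the Mahler measure of $\xi$.

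The opposing upper bound on $\epsilon_k$ is the crux, and where I expect the main obstacle. The strategy is to use the minimality of $\xi$: any downward perturbation $\xi' < \xi$ must break the primality condition at some index, which combined with the recursion pins $\xi^{C_k}$ close to $p_k$ from above. Quantitatively, once $R_k$ is fixed by the primality of $p_{k+1}$, the value of $\epsilon_k$ is confined to an interval of length on the order of $1/(c_{k+1}p_k^{c_{k+1}-1})$; coupling this with the minimality of $\xi$ (which selects the smallest valid choice) and iterating the recursion gives an upper bound of the shape $\epsilon_k \lesssim \xi^{-(c_{k+1}-1)C_k + o(C_k)}$.

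Combining the two bounds gives $\xi^{c_{k+1}-1} \leq \eta^{1+o(1)}$ along a subsequence on which $c_{k+1}$ attains its limsup. If $\limsup c_{k+1} \geq 4$, a sharper analysis of the conjugate contributions in $\eta$ shows $\eta < \xi^{3}$, producing a contradiction and forcing $\xi$ to be transcendental. If $\limsup c_{k+1} = 3$, the inequality $\xi^{2} \leq \eta$ saturates exactly when $d = 3$ and all non-principal conjugates of $\xi$ lie inside the open unit disk; after possibly replacing $\xi$ by $\xi^{C_m}$ to clear denominators and ensure the algebraic integer structure, $\xi^{C_m}$ is then a cubic Pisot number. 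The \emph{main obstacle} is establishing the tight upper bound on $\epsilon_k$: the minimality of $\xi$ is the only available mechanism, and converting it into a quantitative estimate while carefully tracking the free integer parameter $R_k$ and the nonlinear recursion across iterations is the technical heart of the proof.
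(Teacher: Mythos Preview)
Your proposal has a fatal gap at precisely the step you flag: the claim that ``a sharper analysis of the conjugate contributions in $\eta$ shows $\eta<\xi^{3}$.'' The Liouville bound $|N(a^{C_k}\epsilon_k)|\ge 1$ yields $\epsilon_k\ge \eta^{-C_k}$ with $\eta$ of order $a^{d}\prod_{j\ge 2}\max(|\xi_j|,\xi)$, hence $\eta\gtrsim \xi^{\,d-1}$ even when $a=1$. Nothing in your outline bounds $d$; comparing with your upper bound gives only $\limsup_k c_{k+1}\le d+o(1)$, which is the wrong direction for a contradiction. The assertion that the inequality ``saturates exactly when all non-principal conjugates lie in the open unit disk'' is the Pisot conclusion itself and cannot be read off a norm inequality.

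The paper (which deduces this statement from Theorem~\ref{Theorem:Main2} and ultimately Proposition~\ref{Proposition:General}) replaces Liouville by two deep inputs you do not invoke. The Pisot structure is forced by a result of Dubickas built on Corvaja--Zannier, hence on the Subspace Theorem: for algebraic $\alpha>1$, either some $\alpha^{s_m}$ is a Pisot number or $\|\alpha^{s_k}\|>e^{-\epsilon s_k}$ eventually for every $\epsilon>0$ (Theorem~\ref{Theorem-Dubickas1}). The degree bound then comes from Baker's theorem on linear forms in logarithms, via the lower bound $|\beta_2^{n}+\cdots+\beta_\ell^{n}|\ge |\beta_2|^{n}n^{-\lambda}$ for the conjugates of a Pisot $\beta$ (Lemma~\ref{lemma-Dubickas2}). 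A secondary issue: your upper bound $\epsilon_k\lesssim \xi^{-(c_{k+1}-1)C_k}$ is too strong. Minimality of $\xi$ combined with Baker--Harman--Pintz on primes in short intervals gives only $\{\xi^{C_k}\}\lesssim \xi^{-((1-\theta)c_{k+1}-1)C_k}$ with $\theta=21/40$, and only after a dichotomy (Lemma~\ref{lemma:key1}) that separately handles the case where $\|\xi^{C_k}\|$, rather than $\{\xi^{C_k}\}$, is small.
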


This theorem implies that $\xi(3^k)$ is irrational, that is, Mills' constant is irrational.  In addition, $\xi(c^k)$ is transcendental for all integers $c\geq 4$. In this paper, we first improve the theorem in the case when $c_{k+1}$ is eventually equal to $3$. To assert our main result, we consider the following condition \eqref{dag} for a real positive number $\theta$: there are real numbers $x_0>0$ and $d>0$ such that for every $x\ge x_0$, we have
\begin{equation}\label{dag} \tag{$\dag$}
\# (\mathcal{P} \cap [x, x+x^\theta]) \geq \frac{dx^\theta}{\log x}.
\end{equation} 
Baker, Harman, and Pintz \cite{BakerHarmanPintz} provided the best-known result on \eqref{dag}. 
\begin{theorem}\label{Theorem:BHP}
The condition \eqref{dag} is true for $\theta=21/40$.
\end{theorem}

By applying symbolic dynamics, we obtain the following main result in this paper.

\begin{theorem}\label{Theorem:Main5}  Let $(C_k)_{k\ge 1}$ be a positive real sequence satisfying  \eqref{Matomaki1}, \eqref{Matomaki2} in Theorem~\ref{Theorem:Matomaki}, and $c_{k}\in \mathbb{N}$ for all $k\in \mathbb{N}$. Then, the smallest real number of $\mathcal{W}(C_k)$ exists, say $\xi$. If $c_{k+1}=3$ for every sufficiently large $k\in \mathbb{N}$, then either $\xi$ is transcendental, or there exists $m\in \mathbb{N}$ such that  $(\beta\coloneqq) \xi^{C_m}$ is a totally real cubic Pisot number. If the latter is true, then  we have 
\begin{equation}\label{Inequality:Theorem:Main5}
|\beta_3|< -\beta_2\leq \min (|\beta_3|^{17/23}, \beta^{-17/40}), 
\end{equation}
where $\beta_2$ and $\beta_3$ denote the real conjugates of $\beta$ satisfying $\beta>1>|\beta_2|>|\beta_3|$. Furthermore, if \eqref{dag} is true for every fixed $\theta>1/2$, then $\xi$ is transcendental.
\end{theorem}

The condition \eqref{Inequality:Theorem:Main5} seems to give more restrictions on the potential of Pisot numbers, but there are infinitely many totally real cubic Pisot numbers $\beta$ satisfying  \eqref{Inequality:Theorem:Main5} by using Dubickas' result \cite[Theorem]{Dubickas2004} with $d\coloneqq 3$, $\lambda_1\coloneqq -s^{-17/40}/2$, $\lambda_2\coloneqq s^{-17/40}/4$, and $\epsilon\coloneqq s^{-17/40}/8$, where $s$ runs over sufficiently large even numbers. 

Let $N(\sigma, T)$ be the number of zeroes $\rho=\beta+i\gamma$ of the Riemann zeta function in the rectangle $\sigma \leq \beta\leq 1$ and $0<\gamma\leq T$. It is conjectured that for every fixed $\epsilon>0$, we have 
\[
N(\sigma, T) \ll_\epsilon T^{2(1-\sigma)+\epsilon }
\]
uniformly for $1/2\leq  \sigma \leq 1$ as $T\to \infty$. The conjecture is called the \textit{Density Hypothesis} (for short DH). By Ingham's result \cite[Theorem~1]{Ingham}, if DH is true, then 
\[
\lim_{x\to \infty}  \frac{\# (\mathcal{P} \cap [x, x+x^\theta])}{x^\theta/\log x }=1 
\]
for any fixed $\theta>1/2$. We note that RH implies DH. Therefore, Theorem~\ref{Theorem:Main5} leads to the affirmative answer to Question~\ref{Question:under_RH}. 

\begin{theorem}
If DH is true \textup{(}more strongly if RH is true\textup{)}, then Mills' constant is transcendental. 
\end{theorem}

In the case when $\limsup_{k\to \infty} c_{k+1}\leq 3$,  can we prove that $\xi(C_k)$ is transcendental without any hypothesis on the Riemann zeta function?  In this paper, we also aim to obtain several classes of sequences $(C_k)_{k\ge 1}$ satisfying that $\limsup_{k\to \infty} c_{k+1}$ is small, but we can verify the arithmetic properties of $\xi(C_k)$. For simplicity, we propose four representative examples belonging to each class. We will give more general results in the next section.

\begin{theorem} \label{Theorem:main:example}
 The following statements are true:
 \begin{enumerate}\renewcommand{\theenumi}{\Alph{enumi}}
\renewcommand{\labelenumi}{(\theenumi)} 
\item \label{TypeA}$\xi(\lfloor b^k\rfloor)$ is irrational for every fixed real number $b\geq 1+\sqrt{2}$\textup{;}
\item \label{TypeB}$\xi((1+\sqrt{2})^k+(1-\sqrt{2})^k)$ is transcendetal\textup{;}
\item \label{TypeC}$\xi(r3^k-1)$ is transcendental for every fixed integer $r\geq 4.003\times 10^{14}$\textup{;}
\item \label{TypeD}$\xi(3^{k- \lfloor \log k \rfloor}2^{\lfloor \log k \rfloor} )$ is transcendental.  
\end{enumerate}
\end{theorem}

In \eqref{TypeA}, \eqref{TypeB}, and \eqref{TypeC}, we extend the condition $c_k=C_{k}/C_{k-1}\in \mathbb{N}$ to $c_k\in \mathbb{Q}\cap (0,\infty)$. 
Alkauskas and Dubickas \cite{AlkauskasDubickas} have already given such an extension. They constructed a specific transcendental number in $\mathcal{W}(C_k)$ if  $(c_k)_{k\ge 1}$ is a sequence of rational numbers satisfying $c_1=1$, $c_{k+1}>2.1053$ $(k=1,2,3,\ldots)$, $\limsup_{k\to \infty} c_{k+1}=\infty$, and $C_{k}=c_1\cdots c_k \in \mathbb{N}$. However, they did not discuss the case $\limsup_{k\to \infty} c_{k+1}<\infty$ or the arithmetic properties of the smallest element of $\mathcal{W}(C_k)$.

\begin{notation} For all $x\in \mathbb{R}$, $\{x\}$ denotes the fractional part of $x$ \textit{i.e.} $\{x\}=x-\lfloor x\rfloor$. For all finite sets $A$, $\# A$ denotes the number of elements in $A$.  We say that $f(x)\ll g(x)$ for all $x\in X$ if there exists $C>0$ such that $|f(x)| \leq C g(x)$ for $x\in X$. If $C$ depends on some parameters $y_1,\ldots, y_\ell$, we write $f(x)\ll_{y_1,\ldots, y_d} g(x)$.      
\end{notation}

\section{Types~A,B,C,D and Proof of Theorem~\ref{Theorem:main:example}}\label{Section:TypeABC}

In this section, we exhibit four classes of $(C_k)_{k\ge 1}$ which correspond to \eqref{TypeA}, \eqref{TypeB}, \eqref{TypeC}, \eqref{TypeD} in Theorem~\ref{Theorem:main:example}, say Type~A, B, C, and D. These names are symbolic, and so they have no deeper meaning.
\subsection{Type A} 

\begin{theorem}\label{Theorem:Main1}
Suppose that $(c_k)_{k\ge 1}$ is a real sequence satisfying 
\begin{enumerate}\renewcommand{\theenumi}{A\arabic{enumi}}
\renewcommand{\labelenumi}{(\theenumi)} 
\item \label{A1}$c_1\geq 1$\textup{;}
\item \label{A2}$c_{k+1}\geq 2$ for all $k\in \mathbb{N}$\textup{;}
\item \label{A3}$\limsup_{k\to \infty} c_{k+1}>40/19$\textup{;}
\item \label{A4}$C_k=c_1\cdots c_k\in \mathbb{N}$ for all $k\in \mathbb{N}$.
\end{enumerate}
Then, $\xi=\xi(C_k)$ exists. Furthermore, either $\xi$ is transcendental, or there exists $m\in \mathbb{N}$ such that $\xi^{C_m}$ is a Pisot number of degree $\ell$ satisfying 
\begin{equation}\label{equation:TypeA:1}
1\leq \ell \leq \displaystyle{1+ \left(\frac{19}{40}\limsup_{k\to \infty} c_{k+1} -1 \right)^{-1}}.
\end{equation}
In particular, $\xi$ is irrational.
\end{theorem}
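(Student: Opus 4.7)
The approach mirrors and refines the proof of Theorem~\ref{Theorem:Previous}. The key new ingredient is the Baker--Harman--Pintz prime-gap estimate, whose exponent $0.525$ is precisely responsible for the threshold $40/19 = 1/(1-0.525)$ appearing in \eqref{A3}.

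Existence of $\xi$ is immediate from Theorem~\ref{Theorem:Matomaki}, since \eqref{A1}--\eqref{A2} supply its hypotheses. For the dichotomy, write $p_k = \lfloor \xi^{C_k} \rfloor \in \mathcal{P}$. The minimality of $\xi$ in $\mathcal{W}(C_k)$ forces $\xi^{C_{k+1}}$ to sit just above the smallest prime exceeding $p_k^{c_{k+1}}$; Baker--Harman--Pintz yields $p_{k+1} - p_k^{c_{k+1}} \ll p_k^{0.525\,c_{k+1}}$. Applying the mean value theorem to $t \mapsto t^{c_{k+1}}$ on $(p_k,\xi^{C_k})$ then gives
\[
0 \le \{\xi^{C_k}\} \ \ll\ \frac{p_k^{1-0.475\,c_{k+1}}}{c_{k+1}},
\]
which, by \eqref{A3}, is a negative power of $p_k$ along an infinite subsequence of indices.

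Next, suppose $\xi$ is algebraic, with $\mathbb{Q}$-conjugates $\xi=\xi_1,\xi_2,\dots,\xi_d$. Since $C_k\in\mathbb{N}$ by \eqref{A4}, the conjugates of $\xi^{C_k}$ lie among $\{\xi_j^{C_k}\}$. Arguing in the spirit of \cite[\S4--5]{Saito25}, i.e.\ via a Pisot-style Galois-trace analysis suitably adapted to rational $c_k$, the above fractional-part bound forces $|\xi_j^{C_m}|<1$ for every $j$ that induces a nontrivial conjugate of $\xi^{C_m}$, provided $m$ is chosen large enough. Hence $\alpha:=\xi^{C_m}$ is a Pisot number of some degree $\ell$. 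The degree bound now comes from the classical fact that the constant term of the minimal polynomial of $\alpha$ is a nonzero integer, giving $\prod_{j=2}^{\ell}|\alpha_j|\ge\alpha^{-1}$ and therefore $\max_{j\ge 2}|\alpha_j|\ge\alpha^{-1/(\ell-1)}$, combined with a Gelfond/Parseval-type lower bound producing integers $N$ with
\[
\{\alpha^N\} \ \gg\ \alpha^{-N/(\ell-1)} \ =\ \xi^{-N C_m/(\ell-1)}.
\]
Selecting $N = C_k/C_m$ along a subsequence of $k$ for which $C_m \mid C_k$ and $c_{k+1}$ approaches $\limsup c_{k+1}$, and comparing with the upper bound above, yields
\[
\xi^{-C_k/(\ell-1)} \ \ll\ \xi^{C_k(1-0.475\,c_{k+1})},
\]
which after taking logarithms, dividing by $C_k$, and passing to the $\limsup$ rearranges into $0.475\limsup c_{k+1}\le\ell/(\ell-1)$, equivalent to \eqref{equation:TypeA:1}. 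Irrationality of $\xi$ is then automatic: $\xi\in\mathbb{Q}$ would force $\ell=1$ and $\xi^{C_m}\in\mathbb{N}$, which a short argument using \eqref{A2} rules out.

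The main obstacle is extracting the Pisot condition when $\xi$ is not a priori an algebraic integer and $c_k$ is only rational: one cannot cite the Hardy--Pisot theorem verbatim and must carefully manage the denominators of the $c_k$'s while selecting the index $m$ so that $\xi^{C_m}$ becomes an algebraic integer whose nontrivial Galois conjugates sit inside the open unit disk. A secondary obstacle is justifying the Gelfond-type lower bound along a subsequence dense enough to capture $\limsup c_{k+1}$, so as to produce the precise threshold $40/19$ rather than a looser constant; this requires controlling possible cancellations among the conjugates $\alpha_j^N$.
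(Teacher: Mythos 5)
Your high-level road map is right: the threshold $40/19=1/(1-21/40)$ does come from Baker--Harman--Pintz, the smallness of $\{\xi^{C_k}\}$ does drive a Pisot conclusion, and a Dubickas-style lower bound for $|\beta_2^n+\cdots+\beta_\ell^n|$ (Lemma~\ref{lemma-Dubickas2}) combined with the upper bound is exactly how the degree inequality \eqref{equation:TypeA:1} falls out. However, two steps you gloss over are precisely where the paper has to work, and as written your argument has genuine gaps.

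First, the claim that ``minimality of $\xi$ forces $\xi^{C_{k+1}}$ to sit just above the smallest prime exceeding $p_k^{c_{k+1}}$'' is not a one-line consequence: replacing $p_{k+1}$ by a smaller prime near $p_k^{c_{k+1}}$ does produce a candidate limit $\zeta<\xi$, but it is not automatic that $\zeta\in\mathcal{W}(C_k)$, because the integer parts $\lfloor\zeta^{C_j}\rfloor$ for the finitely many indices $j$ below the surgery point could change. The paper's Lemma~\ref{lemma:key1} handles this by introducing the gap $\delta=\min_{k\le k_1}(\xi^{C_k}-p_k)$ (strictly positive thanks to the side hypothesis $\xi^{C_m}\notin\mathbb{N}$), choosing the surgery index $m$ so large that $\zeta$ is within $\delta$ of $\xi$ on those lower levels, and keeping the whole thing as a dichotomy: either you get the prime-gap bound along $I$ (Case II), or $\|\xi^{C_k}\|$ is already extraordinarily small along an infinite set (Case I), and Lemma~\ref{lemma:Case-A} shows the latter directly gives transcendence. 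Without this dichotomy and the $\delta$-argument, your prime-gap bound is unjustified.

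Second, your degree bound requires choosing $N=C_k/C_m$ along a subsequence with $C_m\mid C_k$, but no such divisibility is assumed in \eqref{A1}--\eqref{A4} --- that hypothesis only appears in Type~B. What actually makes this work in the paper is Corvaja--Zannier via Dubickas (Theorem~\ref{Theorem-Dubickas1}) together with Lemma~\ref{lemma-BD1}: from $\|\xi^{C_k}\|\le K\xi^{-C_k t_k}$ one deduces that $\xi^g$ is Pisot for the \emph{smallest} such $g$, and that $g\mid C_k$ for all sufficiently large $k\in I$ is a \emph{conclusion} (any infinite subsequence of $k$ with $g\nmid C_k$ would, by Corvaja--Zannier again and then $\gcd$-closure of Pisot exponents, contradict the minimality of $g$). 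One then raises $\beta=\xi^g$ to the power $C_k/g$ --- which is an integer by this conclusion --- and applies Lemma~\ref{lemma-Dubickas2}; this is Lemma~\ref{Lemma:PisotNumber} in the paper. Your proposal never invokes Corvaja--Zannier, and so has no mechanism to produce the integer exponents needed for the trace/Dubickas lower bound; as written the exponent $C_k/C_m$ is not an integer in general. Finally, you also do not separately dispose of the case $\xi^{C_m}\in\mathbb{N}$ (which the Proposition excludes by hypothesis): the paper treats it first, noting it gives $\ell=1$ in \eqref{equation:TypeA:1} and forces $\xi$ to be an algebraic integer that cannot be rational.
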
 
We will prove Theorem~\ref{Theorem:Main1} in Section~\ref{Section:TypeA}.  A feature of Type~A is that it does not assume algebraic conditions on $(C_k)_{k\ge 1}$ such as divisibility and congruence conditions.   This theorem implies the irrationality of $\xi(\lfloor \alpha b^k  \rfloor)$, although the algebraic properties of $\lfloor \alpha b^k  \rfloor$ are highly unclear for given specific real numbers $b$ and $\alpha$. 
 
\begin{corollary}\label{Corollary:b-adic}Let $b$ and $\alpha$ be real numbers with $b>40/19=2.10526\cdots$ and $\alpha\geq \max (\frac{1}{b(b-2)}, \frac{1}{b})$. Then, $\xi=\xi(\lfloor \alpha b^{k}\rfloor )$ exists. Furthermore, either $\xi$ is transcendental, or there exists $k\in \mathbb{N}$ such that $\xi^{\lfloor \alpha b^k\rfloor}$ is a Pisot number of degree $\ell$ satisfying
\[
1\leq \ell \leq 1+ \left(\frac{19}{40}b-1 \right)^{-1}
\]
In particular,  $\xi$ is irrational.
\end{corollary}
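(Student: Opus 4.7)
The plan is to apply Theorem~\ref{Theorem:Main1} to the sequence $C_k = \lfloor \alpha b^k \rfloor$, reducing the corollary to verifying the four hypotheses \eqref{A1}--\eqref{A4} for this particular choice. Condition \eqref{A4} is immediate, since $\lfloor \cdot \rfloor$ produces integers and $\alpha b^k \geq \alpha b \geq 1$ whenever $\alpha \geq 1/b$; the same inequality at $k=1$ gives $c_1 = \lfloor \alpha b \rfloor \geq 1$, which is \eqref{A1}.

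The only step with any real content is \eqref{A2}. I would use the elementary bounds $\lfloor \alpha b^{k+1} \rfloor > \alpha b^{k+1} - 1$ and $\lfloor \alpha b^k \rfloor \leq \alpha b^k$ to estimate
\[
c_{k+1} = \frac{\lfloor \alpha b^{k+1} \rfloor}{\lfloor \alpha b^k \rfloor} > b - \frac{1}{\alpha b^k},
\]
and then impose $b - 1/(\alpha b^k) \geq 2$, i.e.\ $\alpha b^k (b-2) \geq 1$. Since $b > 40/19 > 2$, the left-hand side is increasing in $k$, so the worst case is $k=1$; hence the hypothesis $\alpha \geq 1/(b(b-2))$ is exactly what is needed to enforce $c_{k+1} \geq 2$ for every $k \in \mathbb{N}$.

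For \eqref{A3} I would pair $\lfloor \alpha b^{k+1} \rfloor \leq \alpha b^{k+1}$ with $\lfloor \alpha b^k \rfloor \geq \alpha b^k - 1$ to obtain $c_{k+1} \to b$ as $k \to \infty$, so that $\limsup_{k \to \infty} c_{k+1} = b > 40/19$. With all four hypotheses in hand, Theorem~\ref{Theorem:Main1} simultaneously yields the existence of $\xi = \xi(\lfloor \alpha b^k \rfloor)$ and the dichotomy between transcendency and $\xi^{C_m}$ being a Pisot number; substituting $\limsup_{k \to \infty} c_{k+1} = b$ into \eqref{equation:TypeA:1} reproduces the stated bound on $\ell$.

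The main obstacle, such as it is, lies only in pinning down the sharp threshold in \eqref{A2}; no transcendence input is required beyond Theorem~\ref{Theorem:Main1} itself. One small sanity check I would perform is that the two terms in $\max(1/(b(b-2)), 1/b)$ are both genuinely needed: the first dominates precisely when $b < 3$ (relevant near the lower endpoint $b = 40/19$), while the second controls the $k=1$ instance of \eqref{A1}/\eqref{A4}. Once this is confirmed, the conclusion is a direct application of Theorem~\ref{Theorem:Main1}.
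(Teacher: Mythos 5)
Your proposal is correct and follows essentially the same approach as the paper: verify \eqref{A1}--\eqref{A4} for $C_k = \lfloor \alpha b^k \rfloor$ and then invoke Theorem~\ref{Theorem:Main1}. The estimates used for \eqref{A2} are the same as the paper's (the paper writes $c_{k+1} \geq \frac{\alpha b^{k+1}-1}{\alpha b^k} \geq b - \frac{1}{\alpha b} \geq 2$, which is your $k=1$ worst-case bound), and your observation that $\lim_{k\to\infty} c_{k+1} = b$ matches the paper's justification of \eqref{A3}.
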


\begin{proof} Let $C_k=\lfloor \alpha b^k\rfloor$ for every $k\in \mathbb{N}$. We recall that $c_1=C_1$, and $c_{k+1}=C_{k+1}/C_k$ for every $k\in \mathbb{N}$. Then, it is clear that $(c_k)_{k\ge 1}$ satisfies \eqref{A3} and \eqref{A4} in Theorem~\ref{Theorem:Main1} since $\lim_{k\to \infty} c_{k+1}=b>40/19$ and $C_k=\lfloor \alpha b^k\rfloor\in \mathbb{N}$ for all $k\in \mathbb{N}$. Furthermore, by $\alpha\geq \max (\frac{1}{b(b-2)}, \frac{1}{b})$, we have 
\begin{gather*}
c_1=C_1=\lfloor \alpha b\rfloor\geq \lfloor (1/b) \cdot b \rfloor=1,\\
c_{k+1}=\frac{C_{k+1}}{C_k}=\frac{\lfloor \alpha b^{k+1}\rfloor }{\lfloor \alpha b^k \rfloor} \geq \frac{\alpha b^{k+1}-1}{\alpha b^k}  \geq  b- \frac{1}{\alpha b} \geq b-\frac{1}{\frac{b}{b(b-2)} } =2 
\end{gather*}
for all $k\in \mathbb{N}$. Thus, $(c_k)_{k\ge 1}$ satisfies \eqref{A1} and \eqref{A2}. Therefore, Theorem~\ref{Theorem:Main1} implies Corollary~\ref{Corollary:b-adic}. 
\end{proof}
 
\begin{proof}[Proof of \eqref{TypeA} in Theorem~\ref{Theorem:main:example}] For all real numbers $b>2$, we observe that 
\[
1\geq \frac{1}{b(b-2)} \iff b^2-2b-1\geq 0 \iff  b\geq 1+\sqrt{2}.  
\]
Applying Corollary~\ref{Corollary:b-adic} with $\alpha=1$ and $b\geq  1+\sqrt{2}$, we obtain \eqref{TypeA}. 
\end{proof}

\subsection{Type B}

\begin{theorem}\label{Theorem:Main2}
Suppose that $(c_k)_{k\ge 1}$ is a real sequence satisfying 
\begin{enumerate}\renewcommand{\theenumi}{B\arabic{enumi}}
\renewcommand{\labelenumi}{(\theenumi)} 

\item \label{B1}$c_1\geq 1$\textup{;}
\item \label{B2}$c_{k+1}\geq 2$ for all $k\in \mathbb{N}$\textup{;}
\item \label{B3}$\limsup_{k\to \infty} c_{k+1}>40/19$\textup{;}
\item \label{B4}$C_k=c_1\cdots c_k\in \mathbb{N}$ for all $k\in \mathbb{N}$.
\end{enumerate}
Then, $\xi=\xi(C_k)$ exists. Let $\epsilon$ be a sufficiently small positive real number such that $I\coloneqq \{k\in \mathbb{N}\colon c_{k+1}\geq 40/19 +\epsilon\}$ is infinite.  We further suppose that 
\begin{enumerate}
\setcounter{enumi}{4}
\renewcommand{\theenumi}{B\arabic{enumi}}
\renewcommand{\labelenumi}{(\theenumi)} 
\item \label{B5}for all $m\in \mathbb{N}$ there exists $k\in I$ with $k>m$ such that $C_m\mid C_k$.
\end{enumerate} 
Then, either $\xi$ is transcendental, or there exists $g\in \mathbb{N}$ such that $\xi^{g}$ is a Pisot number of degree $\ell$ satisfying
\[
3\leq \ell \leq 1+ \left(\frac{19}{40}\limsup_{k\to \infty} c_{k+1} -1 \right)^{-1},
\]  
and for every sufficiently large $k\in I$, we have $g\mid C_k$. In addition, we suppose that 
\begin{enumerate}
\setcounter{enumi}{5}\renewcommand{\theenumi}{B\arabic{enumi}}
\renewcommand{\labelenumi}{(\theenumi)} 
\item \label{B6} for all $m\in \mathbb{N}$ and $L\in \mathbb{N}$ there exists $k\in I$ with $k>m$ such that 
\[
C_k \equiv C_m \mod LC_m.
\]
\end{enumerate}
Then, $\xi$ is transcendental.  
\end{theorem}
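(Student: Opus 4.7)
The plan is to argue by contradiction using the first half of Theorem~\ref{Theorem:Main2}. Assume $\xi$ is not transcendental; then $\beta := \xi^{g}$ is a Pisot number of some degree $\ell \ge 3$, and $g \mid C_k$ for every sufficiently large $k \in I$. Writing $\beta=\beta_1,\beta_2,\dots,\beta_\ell$ for the Galois conjugates and $T_n := \sum_{i=1}^\ell \beta_i^n \in \mathbb{Z}$ for the trace, $T_n$ satisfies a linear recurrence whose characteristic polynomial is the minimal polynomial of $\beta$. Since $|\beta_i| < 1$ for $i \ge 2$, the tail $\delta_n := T_n - \beta^n = \sum_{i \ge 2}\beta_i^n$ satisfies $|\delta_n| < 1$ for $n$ sufficiently large, whence, setting $n_k := C_k/g$,
\[
p_k := \lfloor \xi^{C_k}\rfloor = \lfloor \beta^{n_k}\rfloor = T_{n_k} - \epsilon_k, \qquad \epsilon_k \in \{0,1\},
\]
with $\epsilon_k$ determined by the sign of $\delta_{n_k}$.

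Fix a large $m \in I$ with $g \mid C_m$ and write $p := p_m$, $N := n_m$. For $m$ sufficiently large, $p$ is coprime to the constant term of the minimal polynomial of $\beta$, so the sequence $(T_n \bmod p)$ is purely periodic with some period $\pi = \pi(p)$. Applying (B6) with $L$ chosen so that $g\pi \mid LC_m$ (possible because $g \mid C_m$) yields $k \in I$ with $k > m$ and $n_k \equiv N \pmod \pi$, so that $T_{n_k} \equiv T_N \pmod p$. Using $T_N = p + \epsilon_m$ one obtains the crucial congruence
\[
p_k \equiv \epsilon_m - \epsilon_k \pmod p.
\]
If $\epsilon_k = \epsilon_m$, then $p \mid p_k$; combined with the primality of $p_k$ and $p_k > p$, this is the sought contradiction.

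The main obstacle is to arrange $\epsilon_k = \epsilon_m$, because the congruence $n_k \equiv N \pmod \pi$ alone is compatible with either value of $\epsilon_k \in \{0,1\}$. My plan is to apply (B6) repeatedly with $L$ running through multiples of the original choice, producing infinitely many admissible $k \in I$ with $n_k \equiv N \pmod \pi$ and $n_k \to \infty$, and then to show that the sign of $\delta_{n_k}$ cannot remain permanently opposite to that of $\delta_N$. The degree bound $\ell \ge 3$ forces $\beta$ to have at least two auxiliary conjugates, and I would split into cases according to the structure of the dominant one among $\beta_2,\dots,\beta_\ell$: if it is real positive, then $\delta_n > 0$ eventually and $\epsilon_k = 1 = \epsilon_m$ automatically; if it is real negative, the sign of $\delta_n$ depends only on the parity of $n$, which can be pinned to that of $N$ by inserting a factor of $2$ into $L$; if it is a complex-conjugate pair $re^{\pm i\theta}$, then a Weyl-type equidistribution for $n\theta \bmod 2\pi$ along the progression $n \equiv N \pmod \pi$ (using that $\theta/\pi$ is irrational for the nontrivial complex conjugates of a Pisot number) yields both signs of $\delta_{n_k}$ for infinitely many $k$. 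Selecting such a $k$ closes the contradiction.
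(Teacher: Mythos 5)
Your overall skeleton — pass to the trace recurrence of $\beta$, use periodicity of the trace modulo a prime, and apply (B\ref{B6}) to force a divisibility contradiction — is exactly the route the paper takes. But you have a genuine gap where you treat the sign of the tail $\delta_n$ as something that must be chased by a case analysis, and your resolution of the hardest case does not close.

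The key fact you are missing is that the sign issue is not a genuine obstacle at all, because it is killed by the \emph{minimality} of $\xi$, not by any structure of the conjugates. Proposition~\ref{Proposition:General}\eqref{Result:General:1} gives the fractional-part bound
\[
\{ \xi^{C_k} \} \leq \frac{2}{c_{k+1}\lfloor \xi^{C_k}\rfloor^{(1-\theta)c_{k+1}-1}},
\]
valid for all sufficiently large $k\in I$, and combined with $g\mid C_k$ this forces $\{\beta^{C_k/g}\}<1/2$. Lemma~\ref{Lemma:Trace} then yields Proposition~\ref{Proposition:General}\eqref{Result:General:4}: $\Tr(\beta^{C_k/g}) = \lfloor \xi^{C_k}\rfloor$ exactly, i.e.\ in your notation $\epsilon_k = 0$ for every sufficiently large $k \in I$, no cases needed. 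This fractional-part bound is a consequence of the dichotomy in Lemma~\ref{lemma:key1}, which ultimately rests on the fact that if $\{\xi^{C_k}\}$ were frequently large, one could build a smaller element of $\mathcal{W}(C_k)$ and contradict minimality. You cannot recover this information from the ``first half of Theorem~\ref{Theorem:Main2}'' alone, which only records the degree bound and the eventual divisibility $g\mid C_k$; you would need to invoke the underlying Proposition~\ref{Proposition:General}.

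Because you do not have the trace identity, your case analysis is forced, and the complex-conjugate case is not adequately handled. You invoke Weyl equidistribution of $n\theta \bmod 2\pi$ along the progression $n\equiv N \pmod{\pi}$, but (B\ref{B6}) does not supply an arithmetic progression of admissible $n_k$: it only produces, for each $m$ and $L$, \emph{some} $k\in I$ with $C_k\equiv C_m \pmod{LC_m}$. The resulting $n_k$ form a sparse, uncontrolled subsequence of the progression, and there is no reason equidistribution should persist along it, nor that you can prescribe the sign of $\cos(n_k\theta + \phi)$. Once you import Proposition~\ref{Proposition:General}\eqref{Result:General:4}, your congruence $p_k\equiv \epsilon_m-\epsilon_k \pmod p$ collapses to $p\mid p_k$ immediately, and the rest of your argument closes as intended.
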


\begin{remark}
This theorem implies Theorem~\ref{Theorem:Previous}. Therefore, Theorem~\ref{Theorem:Main2} is an extension of Theorem~\ref{Theorem:Previous} that is the previous main result in \cite{Saito25}.
\end{remark}

We will prove Theorem~\ref{Theorem:Main2} in Section~\ref{Section:TypeB}. We note that \eqref{B1} to \eqref{B4} are coincedent with \eqref{A1} to \eqref{A4}. Thus, by adding the algebraic conditions \eqref{B5} and \eqref{B6}, we can find stronger arithmetic properties of $\xi(C_k)$. A certain class of linear recurrence sequences satisfies \eqref{B1} to \eqref{B6}. 

\begin{corollary}\label{Corollary:Recurrence} 
Let $d$ be an integer greater than or equal to $2$. Let $a_1,\ldots, a_{d-1}$ be integers, and let  $a_0\in \{-1,1\}$.  Let $(R_k)_{k\ge 1}$ be a sequence of positive integers satisfying 
\begin{enumerate}\renewcommand{\theenumi}{B\arabic{enumi}'}
\renewcommand{\labelenumi}{(\theenumi)} 
\item \label{B1'}$R_1\geq 1$\textup{;}
\item \label{B2'}$R_{k+1}/R_k\geq 2$ for all $k\in \mathbb{N}$\textup{;}
\item \label{B3'}$\liminf_{k\to \infty} R_{k+1}/R_k> 40/19$\textup{;}
\item \label{B4'}$R_{k+d}= a_{d-1}R_{k+d-1}+a_{d-2} R_{k+d-2}+ \cdots + a_1 R_{k+1}+a_0R_{k}$\quad  for all $k\in \mathbb{N}$.
\end{enumerate}
Then, the number $\xi(R_k)$ exists, and it is transcendental.
\end{corollary}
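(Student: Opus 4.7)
The plan is to reduce Corollary~\ref{Corollary:Recurrence} directly to Theorem~\ref{Theorem:Main2} by taking $C_k = R_k$. Then $c_1 = R_1$ and $c_{k+1} = R_{k+1}/R_k$, and the identity $C_k = c_1 \cdots c_k = R_k$ is automatic. Conditions \eqref{B1}--\eqref{B4} follow immediately from \eqref{B1'}--\eqref{B4'}. Because $\liminf_{k\to\infty} R_{k+1}/R_k > 40/19$, I can fix $\epsilon > 0$ small enough that $R_{k+1}/R_k \geq 40/19 + \epsilon$ for every sufficiently large $k$, and hence the set $I$ appearing in Theorem~\ref{Theorem:Main2} is cofinite in $\mathbb{N}$.

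The real work is to verify the arithmetic conditions \eqref{B5} and \eqref{B6}. The key input is that the linear recurrence \eqref{B4'} is encoded by the companion matrix
\[
M = \begin{pmatrix} 0 & 1 & 0 & \cdots & 0 \\ 0 & 0 & 1 & \cdots & 0 \\ \vdots & \vdots & & \ddots & \vdots \\ 0 & 0 & 0 & \cdots & 1 \\ a_0 & a_1 & a_2 & \cdots & a_{d-1} \end{pmatrix},
\]
which sends $(R_k,R_{k+1},\ldots,R_{k+d-1})^{T}$ to $(R_{k+1},\ldots,R_{k+d})^{T}$. Expanding the determinant along the first column gives $\det M = (-1)^{d+1}a_0 = \pm 1$, so $M\in \mathrm{GL}_d(\mathbb{Z})$. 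Consequently, for every positive integer $N$, the reduction $M \bmod N$ lies in $\mathrm{GL}_d(\mathbb{Z}/N\mathbb{Z})$, and so multiplication by $M$ is a bijection on the finite set $(\mathbb{Z}/N\mathbb{Z})^{d}$. The orbit of $(R_1,\ldots,R_d)^{T} \bmod N$ under $M$ is therefore a cycle, and the scalar sequence $(R_k \bmod N)_{k\ge 1}$ is \emph{purely} periodic with some period $T(N)\in \mathbb{N}$.

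With this tool in hand, \eqref{B6} is immediate. Given $m,L\in \mathbb{N}$, put $N = L R_m$ and $T = T(N)$, and choose $t\in \mathbb{N}$ large enough that $k \coloneqq m + tT$ lies in $I$, which is possible because $I$ is cofinite. Pure periodicity yields $R_k \equiv R_m \pmod{LR_m}$, which is \eqref{B6}. Condition \eqref{B5} follows by specialising to $L=1$, since then $R_m \equiv 0 \pmod{R_m}$ reads $R_m \mid R_k$. All hypotheses of Theorem~\ref{Theorem:Main2} are now satisfied, and its final conclusion delivers the transcendence of $\xi(R_k)$.

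The argument has no real obstacle; the only delicate point is that one needs \emph{pure} periodicity rather than merely eventual periodicity, so that the congruence $R_k\equiv R_m$ propagates from an arbitrary starting index $m$. This is exactly why the hypothesis $a_0\in\{-1,1\}$ is imposed: without it, $M$ would fail to be invertible modulo divisors of $a_0$, and the congruence step would collapse for those moduli.
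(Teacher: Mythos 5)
Your proof is correct and follows essentially the same route as the paper: reduce to Theorem~\ref{Theorem:Main2} with $C_k=R_k$, observe that (B1')--(B4') give (B1)--(B4), show that $(R_k \bmod q)_{k\ge1}$ is purely periodic because $a_0\in\{-1,1\}$ is a unit, deduce (B6) (and hence (B5) with $L=1$). The only cosmetic difference is that you re-derive the pure periodicity via the companion matrix $M\in\mathrm{GL}_d(\mathbb{Z})$, whereas the paper invokes its Lemma~\ref{Lemma:Periodic} (which proves the same fact by a pigeonhole-plus-backward-propagation argument using the inverse of $a_0$ mod $q$); you are also slightly more explicit than the paper in ensuring that the chosen index $k=m+tT$ actually lies in $I$, which is the correct thing to do since (B6) demands $k\in I$.
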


We will prove Corollary~\ref{Corollary:Recurrence} in Section~\ref{Section:TypeB}. 

\begin{proof}[Proof of \eqref{TypeB} in Theorem~\ref{Theorem:main:example}]
 Let $R_k=(1+\sqrt{2})^k +(1-\sqrt{2})^k$ for all $k\in \mathbb{N}$. Then, by basic calculation, we have $R_1=2$, $R_2=4$, and 
 \[
 R_{k+2}=2R_{k+1}+R_k
 \]
  for all $k\in \mathbb{N}$. Therefore, $(R_k)_{k\ge 1}$ satisfies \eqref{B1'} to \eqref{B4'}. By applying Corollary~\ref{Corollary:Recurrence}, $\xi(R_k)=\xi( (1+\sqrt{2})^k +(1-\sqrt{2})^k)$ is transcendental. 
\end{proof}

\subsection{Type~C} 

For all integers $a_1,a_2,\ldots, a_k$, we define $\gcd(a_1,\ldots a_k)$ as the gratest positive integers $d$ such that $d \mid a_1$, $d\mid a_2$, $\ldots$ , $d \mid a_k$. For all infinite sequences $(a_k)_{k\ge 1}$ of integers, $\gcd (a_1, a_{2},\ldots, a_k)$ is decreasing for $k=1,2,\ldots$, and hence we define 
\[
\gcd (a_1, a_{2},\ldots)=\lim_{k\to \infty } \gcd (a_1, a_{2},\ldots, a_k),
\] 
which is well-defined. We also see that $\gcd (a_m, a_{m+1},\ldots)$ is increasing for $m=1,2,\ldots$, and so we define the \textit{asymptotic greatest common divisor of} $(a_k)_{k\ge 1}$ by
\[
\lim_{m\to \infty}\gcd (a_m, a_{m+1},\ldots)\in \mathbb{N} \cup \{\infty\}
\]
and it is denoted by $\mathrm{agcd}((a_k)_{k\ge 1})=\mathrm{agcd}(a_k)$.

\begin{theorem}\label{Theorem:Main3}Let $c$ be a real number grater than or equal to $3$. Assume that there exists $x_0\geq 1$ such that for all real numbers $x\geq x_0$,  
\begin{equation}\label{P1}
\mathcal{P} \cap [x, x+cx^{1-1/c}]\neq \emptyset. 
\end{equation}
Suppose that $(c_k)_{k\ge 1}$ is a sequence of real numbers satisfying
\renewcommand{\theenumi}{C\arabic{enumi}}
\renewcommand{\labelenumi}{(\theenumi)}   
\begin{enumerate}
\item \label{C1}$c_1\geq 1$\textup{;}
\item \label{C2}$c_{k+1}\geq c$ for all $k\in \mathbb{N}$\textup{;}
\item \label{C3}$C_k=c_1\cdots c_k \in \mathbb{N}$ for all $k\in \mathbb{N}$\textup{;}
\item \label{C4}for all $m\in \mathbb{N}$ there exists $k\in \mathbb{N}$ such that $C_m \mid C_k$\textup{;}
\item \label{C5}$\mathrm{agcd} (C_k) <\infty$. 
\end{enumerate}
Then, $\xi=\xi(C_k)$ exists. Furthermore, either $\xi$ is transcendental or there exists $g\in \mathbb{N}$ such that $g\mid \mathrm{agcd} (C_k)$ and $\xi^g$ is a cubic Pisot number less than or equal to
\[
 (2x_0^{1/c_2}+1)^{g/c_1}.
\]
\end{theorem}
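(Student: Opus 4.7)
The plan is the following. Existence of $\xi=\xi(C_k)$ is immediate from Theorem~\ref{Theorem:Matomaki}, since (C1) gives $c_1>0$ and (C2) together with $c\geq 3$ gives $c_{k+1}\geq 2$. For the quantitative upper bound on $\xi^g$, I would apply (P1) at $x=x_0$ to obtain a prime $P\in[x_0,\,x_0+cx_0^{1-1/c}]$; iterating (P1) starting from $P$ produces an explicit element $\eta\in\mathcal{W}(C_k)$ with $\eta^{C_2}\in[P,P+1)$, so minimality gives $\xi^{C_2}\leq P+1$. A direct computation using $c\geq 3$ and $x_0\geq 1$ then shows $(P+1)^{1/c_2}\leq 2x_0^{1/c_2}+1$, hence $\xi^{c_1}\leq 2x_0^{1/c_2}+1$, and raising to the $g/c_1$ power yields the stated bound on $\xi^g$.

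Next, assume that $\xi$ is algebraic, and let $g_0$ be the smallest positive integer such that $\xi^{g_0}$ is an algebraic integer. Writing $g^\ast=\mathrm{agcd}(C_k)<\infty$ (by (C5)), I would argue that $g_0\mid g^\ast$: otherwise one could find arbitrarily large $k$ with $g^\ast\mid C_k$ but $g_0\nmid C_k$, and along such $k$ the number $\xi^{C_k}=(\xi^{g^\ast})^{C_k/g^\ast}$ carries a nontrivial denominator in $\mathbb{Q}(\xi)$ that conflicts with the condition $\xi^{C_k}\in[P_k,P_k+1)$ via a fractional-part argument analogous to those in \cite{Saito25}. Setting $g=g_0$, we then have $g\mid g^\ast$ and, by (C4), $g\mid C_k$ for every sufficiently large $k$; for such $k$, $\xi^{C_k}=(\xi^g)^{C_k/g}$ is an algebraic integer whose Galois conjugates are $\sigma_j(\xi^g)^{C_k/g}$ for the embeddings $\sigma_1=\mathrm{id},\sigma_2,\ldots,\sigma_d$ of $\mathbb{Q}(\xi^g)$. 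The rational integer $T_k:=\sum_{j=1}^{d}\sigma_j(\xi^g)^{C_k/g}$ satisfies
\[
T_k-P_k=\{\xi^{C_k}\}+\sum_{j=2}^{d}\sigma_j(\xi^g)^{C_k/g}.
\]

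The central step is to prove that $T_k=P_k$ for all sufficiently large admissible $k$: this identification immediately forces $|\sigma_j(\xi^g)|<1$ for each $j\geq 2$, by contrasting the boundedness $\{\xi^{C_k}\}\in[0,1)$ with the exponential growth of the tail sum that would occur were some $|\sigma_j|\geq 1$. Hence $\xi^g$ becomes a Pisot number of some degree $d$. The equality $T_k=P_k$ itself would be obtained by a squeezing argument: the greedy structure of minimality together with (P1) applied at level $k+1$ furnishes a quantitative upper bound on $\{\xi^{C_{k+1}}\}$, and any discrepancy $T_k\neq P_k$ would entail $|T_k-P_k|\geq 1$ and in turn a growth of the tail sum incompatible with the prime-gap constraint on two consecutive levels. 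Finally, to force $d=3$: the cases $d=1$ and $d=2$ are excluded by standard algebraic divisibility obstructions (powers of a rational integer, respectively terms of a binary linear recurrence, cannot be prime for every $k$, by Bang--Zsigmondy-type arguments), while the explicit decay rate of the tail sum matched against the prime-gap estimate with $c\geq 3$ rules out $d\geq 4$, paralleling the cubic-Pisot alternative of Theorem~\ref{Theorem:Previous}. The hardest steps are the squeezing that establishes $T_k=P_k$ and the exclusion of $d\geq 4$; both rely on quantitative Diophantine estimates carefully adapted to the rationally-parametrised sequence $(c_k)_{k\geq 1}$.
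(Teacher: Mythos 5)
Your proposal re-derives from scratch most of the Pisot-number machinery that the paper's proof obtains immediately by quoting Theorem~\ref{Theorem:Main2}: conditions (C1)--(C5) visibly imply (B1)--(B5) (with $I=\mathbb{N}$ because $c_{k+1}\ge c\ge 3 > 40/19 + \epsilon$), so Theorem~\ref{Theorem:Main2} already hands you a $g\in\mathbb{N}$ with $g\mid C_k$ eventually and $\xi^g$ Pisot of degree $\ell$ with $3\le \ell\le 1+(\tfrac{19}{40}c-1)^{-1}<4$, i.e., $\ell=3$, plus $g\mid\mathrm{agcd}(C_k)$. All that remains is the explicit upper bound via Lemma~\ref{Lemma:Const}. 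Your plan instead rebuilds the trace/Galois-conjugate analysis, the $T_k=P_k$ identification, and the exclusions of $d=1,2$ and $d\ge 4$; these are exactly Proposition~\ref{Proposition:General}(ii)--(iv), Lemma~\ref{Lemma:PisotNumber}, and the degree-bound computation that Theorem~\ref{Theorem:Main2} packages. The longer route is not wrong in spirit, but two of its steps have concrete gaps.

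First, the explicit competitor $\eta\in\mathcal{W}(C_k)$: applying (P1) at $x=x_0$ gives a prime $P$ near $x_0$, and iterating (P1) upward from $P$ controls $\lfloor\eta^{C_k}\rfloor$ only for $k\ge 2$. Membership in $\mathcal{W}(C_k)$ also requires $\lfloor\eta^{C_1}\rfloor$ to be prime, and that floor is roughly $P^{1/c_2}<x_0$, where (P1) gives nothing. The paper's Lemma~\ref{Lemma:Const} starts the iteration at level $1$ from a prime $p_1>x_0^{1/c_2}$, and Theorem~\ref{Theorem:Main3}'s proof supplies such a $p_1\le 2x_0^{1/c_2}$ by Bertrand's postulate; this is precisely what yields the clean bound $\xi\le(p_1+1)^{1/C_1}\le(2x_0^{1/c_2}+1)^{1/c_1}$. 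Without something to pin down level $1$, your $\eta$ need not lie in $\mathcal{W}(C_k)$.

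Second, the exclusion of $d\ge 4$. You attribute it to ``the prime-gap estimate with $c\ge 3$,'' which in context reads as (P1). But (P1) corresponds to an exponent $\theta=1-1/c$, and plugging $t_k=(1-\theta)c_{k+1}-1=c_{k+1}/c-1$ into the degree bound of Lemma~\ref{Lemma:PisotNumber} gives $\limsup t_k\ge 0$, which imposes no upper bound on $\ell$ at all when $c_{k+1}\equiv c$. What actually forces $\ell\le 3$ in the paper is the \emph{unconditional} Baker--Harman--Pintz exponent $\theta=21/40$ fed through Proposition~\ref{Proposition:General} and Theorem~\ref{Theorem:Main2}: one gets $\ell\le 1+(\tfrac{19}{40}\cdot 3-1)^{-1}=1+\tfrac{40}{17}<4$. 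Condition (P1) is used only for the explicit numerical bound on $\xi$, not for cubicity. Unless you invoke BHP (or an equivalent) explicitly, the $d\ge 4$ exclusion does not go through. The same point bears on the ``squeezing'' step $T_k=P_k$: the quantitative control on $\{\xi^{C_{k+1}}\}$ that makes this work is Lemma~\ref{lemma:xi-fractional}, which again rests on the (\dag) condition with $\theta=21/40$, not on (P1).
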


In the case $c=3$, by applying \cite[Theorem~1.1]{MTY} and \cite[Section~4]{Cully-Hugill}, Mossinghoff, Trudgian, and Yang \cite[Section~2]{MTY} showed that \eqref{P1} is true for all 
\[
x\geq \exp(3\exp(32.76)).
\]

\begin{proof}[Proof of \eqref{TypeC} in Theorem~\ref{Theorem:main:example}]
Fix an arbitrary integer $r\ge 1$. Let $C_k=r 3^k-1$ for every $k\in \mathbb{N}$. Then, \eqref{C1} and \eqref{C3} are clearly valid. Let us verify \eqref{C2}, \eqref{C4}, and \eqref{C5}.

  For \eqref{C2}, we obtain $C_{k+1}=r 3^{k+1}-1 = 3(r3^k-1)+2\geq 3 C_{k}$. 

For \eqref{C4}, it follows that $\gcd(r,C_m)=1$ for every $m\in \mathbb{N}$ since $C_m=r3^m-1$. Thus, 
\[
3^m\equiv r^{-1} \mod C_m,
\] 
where $r^{-1}$ denotes the inverse of $r$ in $\mathbb{Z}/C_m \mathbb{Z}$. Let $\varphi(n)$ be the number of positive integers up to $n$ which is coprime to $n$. Euler's theorem implies that 
\[
r 3^{m\varphi(C_m)+m}-1= r(3^m)^{\varphi(C_m)+1 }-1 \equiv r \cdot r^{-1} -1 \equiv 0 \mod C_m.
\]
Therefore, we have $C_m\mid C_k$ for $k=m\varphi(C_m)+m$, which is \eqref{C4}. 

For \eqref{C5}, it suffices to show that  
\begin{equation}\label{equation:typeC:1}
\mathrm{agcd}(C_k) =\left\{ \begin{aligned}
 2\quad & \text{if $r$ is odd,}\\
1 \quad   & \text{if $r$ is even}.   
\end{aligned}\right. 
\end{equation}
 We now take an arbitrary prime factor $p$ of $C_m=r3^m-1$. Then, $3^m\equiv r^{-1} \mod p$ since  $\gcd(r,p)=1$. Therefore, we have
\begin{equation}\label{equation:typeC:2}
C_{m+1}=r3^{m+1 }-1 =r (3^m)\cdot 3 -1  \equiv 3-1\equiv 2 \mod p.
\end{equation}
If $r$ is even, then we have $\gcd(C_m,C_{m+1})=1$ for every $m\in \mathbb{N}$, and hence $\mathrm{agcd}(C_k)=1$.  

Suppose that $r$ is odd. Then, by \eqref{equation:typeC:2}, it is clear that the prime factor of $\gcd(C_m,C_{m+1})$ is only $2$. Furthermore, if $r\equiv 1 \mod 4$, then 
\[
C_{2k+1}=r3^{2k+1}-1\equiv (-1)^{2k+1}-1 \equiv -2 \not\equiv 0\mod 4
\]
for all $k\in \mathbb{N}$. If $r\equiv 3 \mod 4$, then 
\[
C_{2k}=r3^{2k}-1\equiv 3(-1)^{2k}-1 \equiv 2\not\equiv 0 \mod 4
\]
for all $k\in \mathbb{N}$. Therefore, $\gcd(C_m, C_{m+1})=2$ for all $m\in \mathbb{N}$, and hence we have  \eqref{equation:typeC:1} and \eqref{C5}.  

By applying Theorem~\ref{Theorem:Main3}, $\xi=\xi(r 3^k-1)$ exists, and either $\xi$ is transcendental or there exists $g\in \{1,2\}$ such that $\xi^g$ is a cubic Pisot number satisfying 
\[
\xi^g \leq (2x_0^{1/c_2} +1 )^{g/c_1}\leq (2x_0^{1/3} +1 )^{2/(3r-1) }.
\]
Let $\kappa$ be the unique real root of $X^3-X-1$, which is approximately equal to 1.324717$\cdots$. Then by Siegel's result \cite{Siegel}, $\kappa$ is the smallest Pisot number. Therefore, if we have
 \[
 (2x_0^{1/3} +1 )^{2/(3r-1)}< \kappa\quad \iff \quad  \frac{2\log (2x_0^{1/3} +1 )}{3\log\kappa} +\frac{1}{3}< r ,
\]
then $\xi$ is transcendental. Choosing $x_0=\exp(3\exp(32.76))$ by  \cite[Section~2]{MTY} and using Mathematica, we obtain
\[
\frac{2\log (2x_0^{1/3} +1 )}{3\log\kappa} +\frac{1}{3}=400296054181891.5\cdots.  \qedhere
\] 

\end{proof}

It is natural to expect that  \eqref{P1} with $c=3$ holds for all $x\ge 1$. If it were true, then we could replace $4.003\times 10^{14}$ with $1$ as follows.
\begin{corollary}\label{Corollary:AssumeRH}
Assume that \eqref{P1} with $c=3$ is true for all real numbers $x\geq 1$. Then, for every fixed integer $r\geq 1$, the number $\xi(r3^k-1)$ exists, and it is transcendental. 
\end{corollary}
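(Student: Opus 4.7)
The plan is to apply Theorem~\ref{Theorem:Main3} to the sequence $C_k = r3^k - 1$ with $c = 3$ and $x_0 = 1$, where the choice $x_0 = 1$ is permitted exactly by the hypothesis of the corollary. Setting $c_1 = 3r-1$ and $c_{k+1} = C_{k+1}/C_k = 3 + 2/(r3^k - 1)$, conditions \eqref{C1}, \eqref{C2}, \eqref{C3} are immediate. For \eqref{C4}, since $\gcd(3, r3^m - 1) = 1$, I let $d$ be the multiplicative order of $3$ modulo $r3^m - 1$; then $r3^{m+d} - 1 \equiv r3^m - 1 \equiv 0 \pmod{r3^m - 1}$, hence $C_m \mid C_{m+d}$. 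For \eqref{C5}, $\gcd(C_1, C_2) = \gcd(3r-1, 9r-1)$ divides $9r - 1 - 3(3r-1) = 2$, so $\mathrm{agcd}(C_k) \leq 2$ is finite (in fact it equals $2$ if $r$ is odd and $1$ if $r$ is even). Theorem~\ref{Theorem:Main3} thus yields the dichotomy: either $\xi = \xi(r3^k - 1)$ is transcendental, or there exists $g \mid \mathrm{agcd}(C_k)$, so $g \in \{1,2\}$, such that $\xi^g$ is a cubic Pisot number with $\xi^g \leq 3^{g/(3r-1)}$.

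To exclude the cubic Pisot alternative I would invoke Siegel's theorem that the smallest Pisot number is the plastic constant $\rho_0 = 1.3247\ldots$, the real root of $X^3 - X - 1$. If $\xi^g$ were Pisot, then $\xi^g \geq \rho_0$ combined with the bound above gives $3r - 1 \leq g \log 3 / \log \rho_0 < 7.82$, forcing $r \leq 2$. For $r = 2$ the numbers $C_k = 2\cdot 3^k - 1$ are all odd, so $\mathrm{agcd}(C_k) = 1$ and $g = 1$; then $\xi \leq 3^{1/5} < \rho_0$, a contradiction. Thus for every integer $r \geq 2$ the Pisot alternative is excluded on pure size grounds and $\xi$ is transcendental.

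The main obstacle is the remaining case $r = 1$. Here $\mathrm{agcd}(C_k) = 2$, so $g \in \{1,2\}$, and the bound $\xi^g \leq 3^{g/2}$ only yields $\xi \leq \sqrt{3}$, which a priori allows $\xi$ (or $\xi^2$) to coincide with a cubic Pisot number in $[\rho_0, \sqrt{3}]$ (respectively $[\rho_0, 3]$). To dispatch this case I would exploit the minimality of $\xi$ more carefully: if $\xi^g = \beta$ were cubic Pisot with the other conjugates $\beta_1, \beta_2$ in the open unit disk, then $\lfloor \beta^m \rfloor$ would differ from the integer trace $t_m = \beta^m + \beta_1^m + \beta_2^m$ by at most $1$ for all sufficiently large $m$, and $(t_m)$ obeys a fixed three-term linear recurrence over $\mathbb{Z}$. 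Combining this rigid algebraic structure with the prime-density hypothesis \eqref{P1} at $x_0 = 1$, one should be able to exhibit a real number strictly smaller than $\xi$ still belonging to $\mathcal{W}(3^k - 1)$, contradicting the definition of $\xi$. This delicate step is exactly what distinguishes the present corollary from Corollary~\ref{Corollary:explicit}, where the enormous lower bound on $r$ already forces $3^{g/(3r-1)} < \rho_0$ so that the Pisot alternative is excluded purely on size grounds.
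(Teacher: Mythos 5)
Your setup and your disposal of the cases $r\geq 2$ match the paper's argument: you correctly apply Theorem~\ref{Theorem:Main3} with $c=3$, $x_0=1$, verify the hypotheses \eqref{C1}--\eqref{C5}, reduce via Theorem~\ref{Theorem:Main3} to the dichotomy involving a cubic Pisot number $\xi^g\leq 3^{g/(3r-1)}$ with $g\in\{1,2\}$, and then invoke Siegel's theorem that the smallest Pisot number is $\kappa\approx 1.3247$ to rule out $r\geq 3$ by size, and $r=2$ after noting $g=1$ (since $\mathrm{agcd}(C_k)=1$ for even $r$). Your verification of \eqref{C4} via the multiplicative order of $3$ is a minor cosmetic variant of the paper's use of Euler's theorem, and your observation that $\gcd(C_1,C_2)\mid 2$ is a clean shortcut to \eqref{C5}.

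However, there is a genuine gap in the case $r=1$, which is precisely the hard case. You acknowledge the gap yourself and propose a strategy: exploit the three-term recurrence satisfied by the traces $t_m=\beta^m+\beta_1^m+\beta_2^m$ together with the prime-density hypothesis to construct a number strictly smaller than $\xi$ in $\mathcal{W}(3^k-1)$. But you do not carry this out, and as written it is not a proof. In fact this is not how the paper proceeds at all. The paper instead observes that the size bound $\xi^g\leq 3^{g/2}\leq 3$ reduces everything to a \emph{finite} list: by Lemma~\ref{Lemma:Akiyama} there are exactly $20$ cubic Pisot numbers $\leq 3$ (Table~\ref{Table:Pisot}), obtained via Akiyama's characterization of minimal polynomials of cubic Pisot numbers. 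The paper then checks, for each candidate $\alpha_j$ (or $\alpha_j^{1/2}$ in the $g=2$ subcase), that some $\lfloor \xi^{3^k-1}\rfloor$ is $1$ or composite. This finite computation is what actually closes the $r=1$ case, whereas your recurrence-plus-density idea is left as a heuristic. You would need to either carry out your sketch rigorously (which seems substantially harder than what the paper does) or replace it with the explicit enumeration-and-check argument. As it stands, the proposal proves the corollary only for $r\geq 2$.
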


We will prove Corollary~\ref{Corollary:AssumeRH} in Appendix~\ref{AppendixA}. Assuming RH,  \eqref{Equation:CMS} implies that \eqref{P1} with $c=3$ is true for all real numbers $x\geq 1$. As a partial result, Cully-Hugill \cite{Cully-Hugill} proved that \eqref{P1} with $c=155$ is true for all $x\ge 1$. More recently, she and Johnston  \cite{CullyHugillJohnston1,CullyHugillJohnston2} proved that  \eqref{P1} with $c=90$ is true for all $x\ge 1$.

\subsection{Type~D} Let us focus on a sequence $(c_k)_{k\ge 1}$ satisfying $c_k\in \{2,3\}$ for every sufficiently large $k\ge 1$. As long as there is no confusion, we write $a_1a_2\cdots a_n$ as $(a_1,\ldots,a_n)$. Further, for every $\nu\in \mathbb{Z}_{\ge 0}\cup \{\infty\}$, we define 
\[
3^\nu=(\underbrace{3,\ldots, 3}_{\nu}),
\]
 where $3^0=\emptyset$ and $3^\infty = (3,3,3,\ldots)$. For example, 
\[
2\ 3^4\ 2\ 3^0\ 2\ 3^\infty =(2,3,3,3,3,2,2,3,3,3,\ldots).
\]
If a sequence $(c_k)_{k\ge 1}$ satisfies $c_k\in \{2,3\}$ for every sufficiently large $k$, then there exist $k_0>0$ and $\nu_1,\nu_2,\ldots\in \mathbb{Z}_{\ge 0} \cup \{\infty\}$ such that
\begin{equation}\label{Definition:nu}
(c_k)_{k\ge k_0}= 3^{\nu_1}\ 2\ 3^{\nu_2}\ 2\ 3^{\nu_3}\ \cdots.
\end{equation}

\begin{theorem}\label{Theorem:Main4} Suppose that $(c_k)_{k\ge 1}$ is a sequence of positive integers satisfying 
\renewcommand{\theenumi}{D\arabic{enumi}}
\renewcommand{\labelenumi}{(\theenumi)}   
\begin{enumerate}
\item \label{D1}$c_{k+1}\geq  2$ for all $k\in \mathbb{N}$\textup{;}
\item \label{D2}there exsits $k_0>0$ such that $c_{k+1}\in \{2,3\}$ for all $k\ge k_0$\textup{;} 
\item \label{D3}$c_{k}=2$ and $c_{k+1}=3$ for infinitely many $k$\textup{;}
\item \label{D4}for $(\nu_j)_{j\ge 1}$ defined in \eqref{Definition:nu}, we have
\begin{equation}\label{Condtion:D4}
\limsup_{N\to \infty} \frac{\nu_{N+1}}{\sum_{j=1}^N\nu_{j} + \frac{\log 2}{\log 3} N}=\infty. 
\end{equation}
\end{enumerate}
Then, $\xi(C_k)$ exists, and it is transcendental. 
\end{theorem}

We note that the constant $\log 2/\log 3$ in \eqref{Condtion:D4} is natural since 
\[
3^{\sum_{j=1}^N\nu_{j} + \frac{\log 2}{\log 3} N}=2\times 3^{\nu_1}\times2\times 3^{\nu_2} \times \cdots \times 2\times 3^{\nu_N }
\]
and we will apply this relation in the proof of Theorem~\ref{Theorem:Main4}.
\begin{remark}
The sequence $(\nu_j)_{j\ge 1}$ depends on the choice of $k_0$, but the left-hand side of \eqref{Condtion:D4} does not depend on  $k_0$. 
\end{remark}

\begin{proof}[Proof of \eqref{TypeD} in Theorem~\ref{Theorem:main:example}]
Let $C_k=3^{k-\lfloor (\log k)^{1/2}\rfloor} 2^{\lfloor (\log k)^{1/2} \rfloor}$ for every $k\in \mathbb{N}$. Then, we observe that $c_1=3$ and for every $k\in \mathbb{N}$
\begin{equation}
c_{k+1}=\frac{C_{k+1}}{C_k}= \begin{cases}
3 & \text{if } e^{j^2}\leq k <  e^{(j+1)^2}-1 \text{ for some $j\in \mathbb{Z}_{\ge 0}$,}\\
2 & \text{if }  e^{(j+1)^2}-1 \leq k < e^{(j+1)^2}  \text{ for some $j\in \mathbb{Z}_{\ge 0}$.}
\end{cases} 
\end{equation}
Therefore, $(\nu_j)_{j\ge 1}$ defined in \eqref{Definition:nu} satisfies $\nu_j= e^{(j+1)^2}-e^{j^2} +O(1)$, and hence  
\[
\frac{\nu_{N+1}}{\sum_{j=1}^N\nu_{j} + \frac{\log 2}{\log 3} N} \gg \frac{e^{(N+2)^2}-e^{(N+1)^2} +O(1)  }{ e^{(N+1)^2} +O(N)  }\gg e^{(N+2)^2-(N+1)^2} \to \infty
\]
as $N\to \infty$. Therefore, by Theorem~\ref{Theorem:Main4}, we obtain \eqref{TypeD} in Theorem~\ref{Theorem:main:example}.
\end{proof}

\section{A general result}

In this section, we present a key proposition which implies Theorems~\ref{Theorem:Main1}, \ref{Theorem:Main2}, and \ref{Theorem:Main3}.  Before claiming, let $\Tr(\beta)$ be the trace of $\beta$ over $\mathbb{Q}$ for every algebraic number $\beta$, that is, $\Tr(\beta)=\beta_1+\cdots+\beta_\ell$, where $\beta_1,\ldots,\beta_\ell$ denote all the conjugates of $\beta$ over $\mathbb{Q}$. 

\begin{proposition}\label{Proposition:General}
Let $\theta$ be a real number in $[1/2,1)$ satisfying \eqref{dag}. Suppose that $(c_k)_{k\ge 1}$ is a sequence of real numbers satisfying 
\begin{enumerate}\renewcommand{\theenumi}{G\arabic{enumi}}
\renewcommand{\labelenumi}{(\theenumi)} 
\item \label{G1}$c_1\ge 1$\textup{;} 
\item \label{G2}$c_{k+1}\geq 2$ for all $k\in \mathbb{N}$\textup{;} 
\item \label{G3}$\limsup_{k\to\infty}c_{k+1}>  1/(1-\theta)$\textup{;}
\item \label{G4}$C_k=c_1\cdots c_k\in \mathbb{N}$ for every sufficiently large $k\in \mathbb{N}$.
\end{enumerate}
Then, $\xi=\xi(C_k)$ exists. We further suppose that for every  $m\in \mathbb{N}$, we have
\begin{equation}\label{ineq:key1-assump1}
 \xi^{C_m}\notin \mathbb{N},
\end{equation}
and we suppose that $\xi$ is algebraic. Let $\epsilon$ be a fixed small real number with 
\begin{equation}\label{Condition:epsilon}
0<\epsilon<\limsup_{k\to\infty}c_{k+1}-1/(1-\theta). 
\end{equation}
Let $I=\{k\in \mathbb{N} \colon c_{k+1}\geq 1/(1-\theta)+\epsilon\}$. Then, the following properties are true\textup{:}
\begin{enumerate}\renewcommand{\theenumi}{\roman{enumi}}
\renewcommand{\labelenumi}{(\theenumi)}
\item \label{Result:General:1}We have 
\[
\{ \xi^{C_k} \} \leq \frac{2}{c_{k+1}\lfloor \xi^{C_k}\rfloor^{(1-\theta)c_{k+1}-1}  }
\]
for every sufficiently large $k\in I$\textup{;} 
\item \label{Result:General:2}There exists $g\in \mathbb{N}$ such that  $\xi^g=\beta$ is a Pisot number of degree $\ell$ with 
\[
2\leq \ell \leq 1+ \biggl((1-\theta)\: \underset{\substack{k\to \infty}}{\limsup}\: c_{k+1}-1\biggl)^{-1}
\]
and we have $g \mid C_k$ for every sufficiently large $k\in I$\textup{;} 
\item \label{Result:General:3} If $\ell=2$, then $\xi^g=\beta=(1+\sqrt{5})/2$ and $C_k/g$ is an odd prime number for every sufficiently large $k\in I$\textup{;} 
\item \label{Result:General:4} We have $\Tr(\beta^{C_{k}/g }) =\lfloor \xi^{C_{k}} \rfloor$ for every sufficiently large $k\in I$. 
\end{enumerate}
\end{proposition}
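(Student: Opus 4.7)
The existence of $\xi$ is immediate from Theorem~\ref{Theorem:Matomaki}: conditions (G1)--(G2) are exactly (M1)--(M2), and the fact that (G4) only requires $C_k\in\mathbb{N}$ eventually is handled by a shift of index. For the fractional-part estimate (i), I would argue by contradiction from the minimality of $\xi$. Fix a large $k\in I$, write $u_k=\lfloor\xi^{C_k}\rfloor\in\mathcal{P}$, and suppose $\{\xi^{C_k}\}>\delta:=2/\bigl(c_{k+1}u_k^{(1-\theta)c_{k+1}-1}\bigr)$. The mean value theorem shows that $[u_k^{c_{k+1}},(u_k+\delta/2)^{c_{k+1}}]$ has length at least $(u_k^{c_{k+1}})^\theta$, so by (\dag) it contains a prime $p'_{k+1}$. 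Iterating this at indices $k+2,k+3,\ldots$ via (\dag), in the spirit of Matom\"{a}ki's construction as adapted in \cite{Saito25}, produces a real number $\xi'<\xi$ with $\lfloor(\xi')^{C_{k'}}\rfloor\in\mathcal{P}$ for all $k'\geq k$. At the finitely many low indices $k'<k$, the change is of order $\delta/\xi^{C_k-C_{k'}}$, which is far smaller than the positive fractional parts $\{\xi^{C_{k'}}\}$ (positive thanks to $\xi^{C_{k'}}\notin\mathbb{N}$) once $k$ is large enough, so primality at these indices is preserved. Hence $\xi'\in\mathcal{W}(C_k)$, contradicting the minimality of $\xi$.

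Assume now that $\xi$ is algebraic. Part (i) yields, for large $k\in I$,
\[
0<\xi^{C_k}-u_k\leq \xi^{-C_k((1-\theta)c_{k+1}-1)},
\]
so $\xi^{C_k}$ is exceptionally close to the integer $u_k$. A Pisot-type characterization in the spirit of \cite[Theorem~1.5]{Saito25} extracts $g\in\mathbb{N}$ such that $\beta:=\xi^g$ is a Pisot number of some degree $\ell$, with $g\mid C_k$ for every sufficiently large $k\in I$. Writing $n=C_k/g$ and listing the conjugates $\beta=\beta_1,\beta_2,\ldots,\beta_\ell$, we have
\[
\xi^{C_k}-\Tr(\beta^n)=-\sum_{i=2}^\ell \beta_i^n,
\]
which is $o(1)$ as $k\to\infty$ since $|\beta_i|<1$ for $i\geq 2$. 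The integer $\Tr(\beta^n)$ thus coincides with the unique integer within distance $1$ of $\xi^{C_k}$, namely $u_k$, giving (iv). The degree upper bound combines three inputs: the integrality $|N(\beta)|\geq 1$ gives $\max_{i\geq 2}|\beta_i|\geq \beta^{-1/(\ell-1)}$; a Baker-type lower bound on sums of powers of algebraic conjugates gives $|\sum_{i\geq 2}\beta_i^n|\gg_\beta \max_{i\geq 2}|\beta_i|^n/n^{O(1)}$; and (i) combined with (iv) upper-bounds $|\sum_{i\geq 2}\beta_i^n|$ by $\xi^{-C_k((1-\theta)c_{k+1}-1)}$. These together yield $\beta^{-n/(\ell-1)}\lesssim \xi^{-C_k((1-\theta)c_{k+1}-1)}$, which, using $\beta^n=\xi^{C_k}$, collapses in the limit to $1/(\ell-1)\geq(1-\theta)c_{k+1}-1$, giving the stated upper bound on $\ell$. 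The lower bound $\ell\geq 2$ is immediate: if $\ell=1$ then $\beta\in\mathbb{N}$ and $\xi^{C_k}=\beta^n\in\mathbb{N}$ for large $k\in I$, contradicting \eqref{ineq:key1-assump1}.

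For (iii), $\ell=2$ forces $\limsup c_{k+1}\leq 2/(1-\theta)$, and the trace $T_n=\Tr(\beta^n)=\beta^n+\beta'^n$ satisfies a binary integer linear recurrence. The classical divisibility $T_m\mid T_n$ whenever $m\mid n$ (excluding small exceptional indices of such Lucas-type sequences) forces $n=C_k/g$ to be an odd prime in order for $T_n=u_k\in\mathcal{P}$ to hold for infinitely many $k\in I$. Minimality of $\xi$ then pins $\beta$ to the smallest quadratic Pisot number compatible with this primality requirement at all admissible indices, namely the golden ratio $(1+\sqrt{5})/2$. The main technical obstacle is the Pisot-type characterization invoked in (ii): extracting a single integer $g$ with $g\mid C_k$ for every sufficiently large $k\in I$ (rather than merely the weaker statement that some power of $\xi$ is a Pisot number) is the algebraic heart of the argument, requiring the fast decay of $\{\xi^{C_k}\}$ along the geometrically growing subsequence $(C_k)_{k\in I}$ to be translated into uniform algebraic structure; this essentially refines the lemmas of \cite{Saito25}.
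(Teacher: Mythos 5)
Your outline for parts~(ii)--(iv) is roughly aligned with the paper: extract a Pisot power via a Corvaja--Zannier/Dubickas criterion (Theorem~\ref{Theorem-Dubickas1}), bound the degree via the norm $|\beta_1\cdots\beta_\ell|\ge 1$ together with a Baker-type lower bound on $|\beta_2^n+\cdots+\beta_\ell^n|$ (Lemma~\ref{lemma-Dubickas2}), and read off the trace identity (iv). Part~(iii) is stated somewhat vaguely --- the paper does not ``pin $\beta$ by minimality'' but rather uses the factorization $\beta_1^{rs}+\beta_2^{rs}=(\beta_1^r+\beta_2^r)(\cdots)$ applied with $r=1$ to deduce $\beta_1+\beta_2=1$ and hence $\beta_1=(1+\sqrt5)/2$ --- but the spirit is close.

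However, there is a genuine gap in your treatment of part~(i): you claim the fractional-part bound follows purely from minimality of $\xi$, via a direct perturbation at a single large $k\in I$. This cannot be correct, because the bound does \emph{not} follow from minimality alone --- the paper's Proposition uses the algebraicity of $\xi$ precisely to establish (i). The subtle point is the verification that your perturbed $\xi'$ still has $\lfloor(\xi')^{C_{k'}}\rfloor=p_{k'}$ at indices $k'<k$. Your estimate ``the change at $k'$ is of order $\xi^{-(C_k-C_{k'})}$, which is far smaller than $\{\xi^{C_{k'}}\}$ once $k$ is large'' does not go through uniformly: as $k$ grows, the set of low indices $k'<k$ grows with it, and the minimum of $\{\xi^{C_{k'}}\}$ over $k'<k$ has no a priori positive lower bound independent of $k$. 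In particular, $\{\xi^{C_{k'}}\}$ could tend to $0$ along a subsequence (indeed (i) itself forces this along $I$), and at $k'=k-1$ the perturbation is only $\lesssim\xi^{-C_{k-1}(c_k-1)}$, which need not be smaller than $\{\xi^{C_{k-1}}\}$. The paper resolves this via the dichotomy of Lemma~\ref{lemma:key1}: either $\|\xi^{C_k}\|\le 1/(c_{k+1}\lfloor\xi^{C_k}\rfloor^{c_{k+1}-1})$ for infinitely many $k$ (Case I), or the controlled prime-gap estimate holds along $I$ (Case II). If Case I fails, Lemma~\ref{lemma:IdealIneq} supplies the ``iterability'' $p_{k}^{c_{k+1}}\le p_{k+1}<(p_k+1)^{c_{k+1}}-1$ for \emph{all} $k\ge k_1$ with a \emph{fixed} threshold $k_1$, so one only needs to check the perturbation against the finitely many (fixed) indices $k'\le k_1$ --- this is where the minimality argument closes. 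But Case~I may genuinely occur (it is not a contradiction with minimality), and ruling it out is precisely where algebraicity is used: Lemma~\ref{lemma:Case-A} shows Case~I would force $\xi^{C_m}$ to be a Pisot number of degree $1$ or $2$, which is then eliminated. Your proposal never invokes algebraicity for (i) and never establishes the iterability condition away from the perturbed index, so the construction of $\xi'\in\mathcal W(C_k)$ is not justified. You need the dichotomy, or an equivalent device, and you need the algebraicity hypothesis already at this stage.
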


The rest of the paper is organised as follows. In Section~\ref{Section:Existence}, we quickly review the proof of Theorem~\ref{Theorem:Matomaki}. In Section~\ref{Section:Lemmas}, we prepare auxiliary results for proving Proposition~\ref{Proposition:General}. In Section~\ref{Section:ProofGenProp}, we complete the proof of Proposition~\ref{Proposition:General}. In Sections~\ref{Section:TypeA}, \ref{Section:TypeB}, and \ref{Section:TypeC},  we prove Theorems~\ref{Theorem:Main1}, \ref{Theorem:Main2}, and \ref{Theorem:Main3}, respectively. At last, we prove Theorems~\ref{Theorem:Main5} and \ref{Theorem:Main4} in Section~\ref{Section:TypeD}

\section{Proof of Theorem~\ref{Theorem:Matomaki}}\label{Section:Existence}

\begin{lemma}[{\cite[Lemma~4.1]{SaitoTakeda} and \cite[Lemma~3.1]{Saito25}}]\label{Lemma:existence}
Let $(C_k)_{k\ge 1}$ be a sequence of positive real numbers. If $\mathcal{W}(C_k)$ is non-empty, then the smallest element of $\mathcal{W}(C_k)$ exists. 
\end{lemma}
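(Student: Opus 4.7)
The plan is to set $A^{*}:=\inf \mathcal{W}(C_k)$ and to verify directly that $A^{*}\in \mathcal{W}(C_k)$, so that $A^{*}$ is automatically the smallest element. First I would observe that $\mathcal{W}(C_k)$ is bounded below by $2^{1/C_1}$, since any $A\in \mathcal{W}(C_k)$ must satisfy $\lfloor A^{C_1}\rfloor \geq 2$ in order for $\lfloor A^{C_1}\rfloor$ to be prime; in particular, $A^{*}\geq 2^{1/C_1}>1$ is a well-defined real number, and it is the unique candidate for the minimum.

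The main step is a minimising-sequence argument. I would choose $A_n\in \mathcal{W}(C_k)$ with $A_n\downarrow A^{*}$; if some $A_n$ already equals $A^{*}$ we are done, so assume $A_n> A^{*}$ strictly. Fix $k\in \mathbb{N}$. By continuity of $x\mapsto x^{C_k}$ on $(0,\infty)$ we have $A_n^{C_k}\downarrow (A^{*})^{C_k}$ from strictly above, and since the floor function is right-continuous this gives
\[
\lfloor A_n^{C_k}\rfloor \;=\; \lfloor (A^{*})^{C_k}\rfloor \qquad \text{for all sufficiently large } n.
\]
The identity holds both when $(A^{*})^{C_k}\notin \mathbb{Z}$ (trivially, since $A_n^{C_k}$ lies in the same open integer interval for large $n$) and when $(A^{*})^{C_k}=m\in \mathbb{Z}$ (because the strict inequality $A_n^{C_k}>m$ forces $A_n^{C_k}\in(m,m+1)$ for large $n$). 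Because the left-hand side is prime by assumption, the stable value $\lfloor (A^{*})^{C_k}\rfloor$ is prime as well, and since $k$ was arbitrary, $A^{*}\in \mathcal{W}(C_k)$.

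I do not expect a substantive obstacle in this argument. The only delicate point is the behaviour of $\lfloor\cdot\rfloor$ in the borderline case $(A^{*})^{C_k}\in \mathbb{Z}$, and this is resolved precisely by the strict inequality $A_n>A^{*}$, which prevents the floor from dropping to $m-1$ in the limit. No further analytic or arithmetic input is required, which is consistent with this being an existence lemma that underlies the more substantial transcendency arguments later in the paper.
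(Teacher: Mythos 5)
Your argument is correct and is essentially the paper's own proof: both take a decreasing sequence $A_n\downarrow A^{*}=\inf\mathcal{W}(C_k)$ and invoke the right-continuity of $\lfloor\cdot\rfloor$ composed with $x\mapsto x^{C_k}$ to transfer primality of $\lfloor A_n^{C_k}\rfloor$ to $\lfloor (A^{*})^{C_k}\rfloor$ for each fixed $k$. Your explicit case split on whether $(A^{*})^{C_k}$ is an integer just unpacks the right-continuity step that the paper states in one line.
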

\begin{proof}
Since $\mathcal{W}(C_k)$ is non-empty and lower bounded,  its infimum exists, say $\zeta$. Let $k$ be a fixed positive integer. By the definition of the infimum, we find a sequence $A_1>A_2>\cdots$ of $\mathcal{W}(C_k)$ satisfying $\lim_{j\to \infty} A_j=\zeta$. By the right-hand side continuity of $\lfloor \cdot \rfloor$, we obtain  
\[
\lfloor \zeta^{C_k}\rfloor = \lfloor\lim_{j\to \infty}  A_j^{C_k}\rfloor = \lfloor A_j^{C_k}\rfloor\in \mathcal{P}. 
\]
 Therefore, $\zeta$ is the smallest element of $\mathcal{W}(C_k)$. 
\end{proof}

\begin{lemma}\label{Lemma-Matomaki3}
Let $(e_k)_{k\ge 2}$ be a real sequence with $e_k\geq 2$ for all $k\geq 2$. Let $\epsilon_0$ be a sufficiently small positive real number.  Then, there exist real numbers  $X_1=X_1(\epsilon_0)>0$ and $d_2>0$ such that for all real numbers $X$ and $\eta$ satisfying $X\geq X_1$ and $\eta\in [1/2,1-\epsilon_0]$, if we have
\begin{equation}\label{eq:NumberPrimes1}
\#([X,X+X^\eta] \cap \mathcal{P} ) \geq  \frac{d_2 X^\eta}{\log X}, 
\end{equation}
then we can find a sequence  $(q_{k})_{k\ge 1}$ of prime numbers such that $X\leq q_1\leq  X+X^\eta$ and for all $k\in \mathbb{N}$, we have  
\begin{equation}\label{Inequality-Matomaki}
q_{k}^{e_{k+1}} \leq q_{k+1}< (q_{k}+1)^{e_{k+1}}-1 .   
\end{equation}
\end{lemma}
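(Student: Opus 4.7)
My plan is to build $(q_k)_{k\ge 1}$ inductively. First I would take $X_1$ large enough (in terms of $\epsilon_0$) so that $d_2 X^\eta/\log X \ge 1$ whenever $X\ge X_1$ and $\eta\in[1/2,1-\epsilon_0]$; then \eqref{eq:NumberPrimes1} supplies at least one prime in $[X,X+X^\eta]$, and I take $q_1$ to be any such prime.

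For the inductive step, given $q_k$ I seek a prime $q_{k+1}$ in $[q_k^{e_{k+1}},(q_k+1)^{e_{k+1}}-1)$. Setting $Y = q_k^{e_{k+1}}$, the mean value theorem yields
\[
(q_k+1)^{e_{k+1}} - 1 - q_k^{e_{k+1}} \ \ge\ e_{k+1}\, Y^{\,1-1/e_{k+1}} - 1 \ \ge\ 2\sqrt{Y} - 1,
\]
using $e_{k+1}\ge 2$. In the favorable regime $e_{k+1}\ge 40/19$ we have $1-1/e_{k+1}\ge 21/40$, so Theorem~\ref{Theorem:BHP} (Baker--Harman--Pintz), applied at scale $Y$, immediately produces the required prime $q_{k+1}$ inside the child interval.

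The main obstacle will be the narrow regime $e_{k+1}\in[2,40/19)$, where the child interval has length only in $[Y^{1/2},Y^{21/40})$, just short of what Theorem~\ref{Theorem:BHP} handles pointwise. To cover this case, rather than greedily committing to a single $q_1$, I would retain the entire supply of $\gtrsim d_2 X^\eta/\log X$ primes in $[X,X+X^\eta]$ furnished by \eqref{eq:NumberPrimes1} and view them as the level-$1$ nodes of a tree $T$ whose level-$(k+1)$ descendants of a level-$k$ node $q_k$ are the primes in $[q_k^{e_{k+1}},(q_k+1)^{e_{k+1}}-1)$. Each node has finitely many children, so by K\"onig's lemma an infinite branch of $T$ (which is precisely the sequence the lemma requires) exists provided $T$ contains chains of every finite length. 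I would verify this last point by a counting argument at each level: summing the lengths of the child intervals over the parent primes, the total coverage at level $k+1$ grows polynomially in $X$, and combining the initial density from \eqref{eq:NumberPrimes1} with a Brun--Titchmarsh-type average over parents (and Theorem~\ref{Theorem:BHP} applied on sub-intervals of sufficient length) should give at least one prime in the union once $X_1$ is chosen large enough in terms of $\epsilon_0$. Making this averaging quantitative and uniform in $k$ is the technical heart of the argument; once it is in hand, extracting an infinite branch of $T$ produces $(q_k)_{k\ge 1}$ with the required nested inequalities.
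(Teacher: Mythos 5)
You correctly diagnose the obstruction: in the regime $e_{k+1}$ close to $2$ the interval $[q_k^{e_{k+1}},(q_k+1)^{e_{k+1}}-1)$ has length only $\approx e_{k+1}\,q_k^{\,e_{k+1}-1}$, i.e.\ $\approx Y^{1-1/e_{k+1}}$ with $Y=q_k^{e_{k+1}}$, which can be as small as $\approx 2\sqrt{Y}$, below what any known \emph{pointwise} short-interval prime theorem (including the Baker--Harman--Pintz exponent $21/40$ of Theorem~\ref{Theorem:BHP}) can handle. So a greedy choice of $q_k$ is hopeless, and some averaging/tree structure is indeed needed. The paper itself does not reprove this lemma but deduces it from \cite[Lemma~3.2]{SaitoTakeda}, which in turn rests on Matom\"aki's Lemma~9 of \cite{Matomaki} together with \cite[Lemma~1.2]{Matomaki2007}; your sketch is therefore attempting to recreate precisely the content of those cited results.

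However, the sketch has a genuine gap, and it is not a small one. First, the K\"onig-lemma reduction is fine, but verifying that the tree has a node at every level is not achievable by showing merely ``at least one prime in the union of child intervals'' level by level: if at level $k$ there were only one node $q_k$, the union at level $k+1$ would have measure only $\approx 2\sqrt{q_k^{e_{k+1}}}$ and no counting argument would produce a prime. What one must actually propagate is a \emph{quantitative} density, say $\gg (\text{interval length})/\log$, of primes at each level — that is the hypothesis \eqref{eq:NumberPrimes1} and the point of the uniform constant $d_2$. Second, the tool you invoke to supply the averaging step is off: Brun--Titchmarsh gives \emph{upper} bounds on primes in intervals and cannot show that most child intervals contain primes. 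What is actually needed is an exceptional-set bound for primes in intervals $[y,y+\sqrt{y}]$ (ultimately resting on zero-density estimates for the Riemann zeta-function), showing that the set of $q$ for which $[q^{e},(q+1)^{e}-1]$ fails to contain $\gg q^{e-1}/\log q^{e}$ primes is of lower order than the total number of primes in $[X,X+X^\eta]$. Carrying this out uniformly in $e\in[2,\infty)$ and in $\eta\in[1/2,1-\epsilon_0]$ is the technical content of \cite[Lemma~9]{Matomaki}, and without it your argument does not close. Your own final sentence concedes this is the ``technical heart''; it is, and it is not filled in.
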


\begin{proof}
This lemma is deduced from \cite[Lemma~3.2]{SaitoTakeda} with $X\coloneqq X$ and $Y\coloneqq X+X^\eta$.  We note that it essentially follows from Matom\"{a}ki's results  \cite[Lemma~9]{Matomaki} and  \cite[Lemma~1.2]{Matomaki2007}. 
\end{proof}

\begin{proof}[Proof of Theorem~\ref{Theorem:Matomaki}] Let $\epsilon_0$ be a positive real number, and let $\eta\in[1/2,1-\epsilon_0]$. Let $X_1=X_1(\epsilon)$ be as in Lemma~\ref{Lemma-Matomaki3}.  Let $Y_0$ be a sufficiently large real number with $Y_0\geq X_1$, and we take $Y\geq Y_0$.  By the prime number theorem, we have
\begin{equation}\label{ineq:Matomaki-0}
\#([Y,2Y)\cap \mathcal{P}) \geq  \frac{Y}{2\log Y}
\end{equation}
since $Y\geq Y_0$ and $Y_0$ is sufficiently large. 
By decomposing $[Y,2Y)$ into at least $2Y^{1-\eta}$ intervals with length $Y^\eta/2$, there exist an interval $[X,X+X^\eta] \subseteq [Y,2Y)$ and a constant $d_3>0$ such that 
\[
\# ([X, X+X^\eta]\cap \mathcal{P}) \geq  \frac{d_3 X^\eta}{\log X}, 
\]
Since $X\geq Y \geq Y_0 \geq X_1$,  by applying Lemma~\ref{Lemma-Matomaki3} with $(e_k)_{k\ge 2} \coloneqq (c_k)_{k\ge 2}$, there exists a sequence $(p_k)_{k\ge 1}$ of prime numbers  such that for every $k\in \mathbb{N}$, we have 
\begin{equation}\label{ineq:Matomaki-3}
p_k^{c_{k+1}} \leq p_{k+1} < (p_k+1)^{c_{k+1}}-1.
\end{equation}
Therefore, since $C_k=c_1\cdots c_k$, for every $k\in \mathbb{N}$, by \eqref{ineq:Matomaki-3} we have
\begin{equation*}\label{ineq:Matomaki-4}
p_k^{1/C_k} \leq  p_{k+1}^{1/C_{k+1}} < (p_{k+1}+1)^{1/C_{k+1}}<(p_k+1)^{1/C_k}.
\end{equation*}
By substituting $k=1,2,\ldots$, we deduce that 
\begin{equation}\label{ineq:Matomaki-5}
p_1^{1/C_1} \leq  p_{2}^{1/C_{2}}\leq p_{3}^{1/C_{3}}\leq \cdots  < (p_{3}+1)^{1/C_{3}}<(p_{2}+1)^{1/C_{2}}<(p_1+1)^{1/C_1}.
\end{equation}
Thus, both $\lim_{k\to \infty} p_k^{1/C_k}$ and $\lim_{k\to \infty} (p_k+1)^{1/C_k}$ exist, say $A$ and $A'$, respectively. Then, we have $A\leq A'$ by \eqref{ineq:Matomaki-5}. Furthermore, by \eqref{ineq:Matomaki-5}, we conclude that 
\[
p_k\leq A_1^{C_k}\leq A_2^{C_k}<p_k+1,
\]
which implies that $p_k=\lfloor A^{C_k} \rfloor$ for every $k\in \mathbb{N}$.  Therefore, we obtain $A\in \mathcal{W}(C_k)$. Since $\mathcal{W}(C_k)$ is non-empty, the number $\xi(C_k)$ exists by Lemma~\ref{Lemma:existence}.
 \end{proof}

%%%%%%%%%%%%%%%%%%%%%%%%%%%%%%%%%%%%%%%%%%%%

\section{Auxiliary results}\label{Section:Lemmas}

\subsection{A key dichotomy}
For all real numbers $x$, we define $\|x\|$ as the length between $x$ and the nearest integer of $x$.

\begin{lemma}\label{lemma:IdealIneq}
For all real numbers $x\geq 1$ and $c\geq 1$, if   
\[
\|x\|>  1/(c\lfloor x\rfloor^{c-1} ),
\]
then we have $\lfloor x \rfloor^{c} \leq \lfloor x^c \rfloor < (\lfloor x \rfloor+1)^c-1$.
\end{lemma}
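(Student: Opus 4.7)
The plan is to reduce the entire statement to the convexity of $t \mapsto t^c$ on $[0,\infty)$, which holds since $c \geq 1$. Write $n = \lfloor x\rfloor$ and $f = \{x\} = x - n \in [0,1)$. The key starting observation is that $\|x\| = \min(f,\, 1-f)$, so the hypothesis $\|x\| > 1/(c n^{c-1})$ simultaneously yields the two lower bounds
\[
f > \frac{1}{c n^{c-1}} \qquad \text{and} \qquad 1 - f > \frac{1}{c n^{c-1}}.
\]
I will use each one to establish one of the two inequalities in the conclusion.

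For the left inequality, I would apply the tangent line inequality for the convex function $t \mapsto t^c$ at $t = n$, obtaining $(n+f)^c \geq n^c + c n^{c-1} f$. Feeding in the lower bound on $f$ gives $x^c > n^c + 1$. Since $\lfloor x^c\rfloor > x^c - 1$ for any real number, I conclude $\lfloor x^c\rfloor > n^c$, which is (a strict form of) the desired bound $\lfloor x\rfloor^c \leq \lfloor x^c\rfloor$.

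For the right inequality, I would apply the same tangent line inequality, but based at $t = n+f$: by convexity, $(n+1)^c \geq (n+f)^c + c(n+f)^{c-1}(1-f)$, and since $(n+f)^{c-1} \geq n^{c-1}$ when $c \geq 1$, this chains to $(n+1)^c \geq x^c + c n^{c-1}(1-f) > x^c + 1$ using the second lower bound above. Hence $x^c < (n+1)^c - 1$, and so $\lfloor x^c\rfloor \leq x^c < (\lfloor x\rfloor+1)^c - 1$.

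There is no real obstacle here; the proof is essentially one convexity estimate applied at both ends of the interval $[n,n+1]$. The one point requiring a moment's thought is that the hypothesis is framed in terms of the distance $\|x\|$ to the \emph{nearest} integer rather than just $\{x\}$, and it is precisely this symmetric hypothesis that lets both the $f$ and $1-f$ sides of the convexity estimate clear the threshold $1$ simultaneously.
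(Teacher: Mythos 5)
Your proof is correct and rests on the same estimate as the paper's: the paper invokes the mean value theorem on $[\lfloor x\rfloor, x]$ and $[x, \lfloor x\rfloor+1]$ and then bounds the intermediate point from below, which is exactly your tangent-line inequality for the convex function $t\mapsto t^c$; the only difference is that the paper argues by contrapositive while you argue directly.
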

\begin{proof}
We observe that 
\begin{align*}
\lfloor x \rfloor^{c} > \lfloor x^c \rfloor&\implies \lfloor x\rfloor^c > x^c -\{x^c\}\\
& \implies 1\geq  x^c- \lfloor x\rfloor^c.
\end{align*}
By the mean value theorem, there exists a real number $y\in [\lfloor x \rfloor,x]$ such that 
\[
x^c- \lfloor x\rfloor ^c= c(x-\lfloor x\rfloor) y^{c-1} \geq c\{x\} \lfloor x\rfloor^{c-1}\geq c\|x\| \lfloor x\rfloor^{c-1},
\]
and hence
\begin{equation}\label{ineq:IdealIneq-1}
\lfloor x \rfloor^{c} > \lfloor x^c \rfloor \implies \|x\| \leq 1/(c\lfloor x\rfloor^{c-1} ).
\end{equation}
In addition, we have 
\begin{align*}
\lfloor x^c \rfloor\geq (\lfloor x \rfloor+1)^c-1 &\implies 1\geq (\lfloor x \rfloor+1)^c-\lfloor x^c\rfloor\\
&\implies 1\geq (\lfloor x \rfloor+1)^c-x^c.
\end{align*}
 By applying the mean value theorem again, there exists a real number  $z\in [ x,\lfloor x\rfloor+1]$ such that 
\[
(\lfloor x \rfloor+1)^c-x^c=c(\lfloor x \rfloor+1-x)z^{c-1}\geq c(1-\{x\}) x^{c-1} \geq c\|x\| \lfloor x\rfloor^{c-1}
\]
and hence 
\begin{equation}\label{ineq:IdealIneq-2}
\lfloor x^c \rfloor \geq (\lfloor x \rfloor+1)^c-1 \implies \|x\| \leq 1/(c\lfloor x\rfloor^{c-1} ).
\end{equation}
Combining \eqref{ineq:IdealIneq-1} and \eqref{ineq:IdealIneq-2}, we conclude Lemma~\ref{lemma:IdealIneq}.
\end{proof}

Let $\theta$ be a real number in $[1/2,1)$ satisfying \eqref{dag}. Throughtout this subsection, let $(C_k)_{k\ge 1}$ be a sequence of real numbers satisfying \eqref{G1}, \eqref{G2}, and \eqref{G3}. Theorem~\ref{Theorem:Matomaki} implies that  $\mathcal{W}(C_k)$ is non-empty and $\xi(C_k)$ exists. Let $\xi=\xi(C_k)$, and let $p_k=\lfloor \xi^{C_k}\rfloor$ for every $k\in \mathbb{N}$. 

\begin{lemma}\label{lemma:key1} Suppose that for every  $k\in \mathbb{N}$ we have \eqref{ineq:key1-assump1}.
Let $\epsilon$ and $I$ be as in Proposition~\ref{Proposition:General}. Then, the following \eqref{CaseI} or \eqref{CaseII} is true\textup{:}
\begin{enumerate}  \renewcommand{\theenumi}{\Roman{enumi}}
\renewcommand{\labelenumi}{(\theenumi)}
\item \label{CaseI}
for infinitely many integers $k$, we have   
\begin{equation*}\label{ineq:caseA}
\|\xi^{C_k}\|\leq \frac{1}{c_{k+1}\lfloor \xi^{C_k}\rfloor^{c_{k+1}-1}},
\end{equation*} 
\item \label{CaseII} there exists $k_0>0$ such that for every $k\in I \cap[k_0,\infty)$, we have 
\begin{equation*}\label{ineq:caseB}
p_{k}^{c_{k+1}} \leq p_{k+1} \leq p_{k}^{c_{k+1}}+p_k^{\theta c_{k+1}}.  
\end{equation*}
\end{enumerate}
\end{lemma}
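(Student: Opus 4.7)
The plan is a dichotomy. Assume (I) fails, so there is $k_1$ with $\|\xi^{C_k}\|>1/(c_{k+1}p_k^{c_{k+1}-1})$ for every $k\geq k_1$. Since $\xi^{C_k}\notin\mathbb{N}$ by \eqref{ineq:key1-assump1}, I may apply Lemma~\ref{lemma:IdealIneq} with $x=\xi^{C_k}$ and $c=c_{k+1}\geq 2$ (by \eqref{G2}); this immediately yields
\[
p_k^{c_{k+1}}\leq p_{k+1}<(p_k+1)^{c_{k+1}}-1
\]
for all $k\geq k_1$, which is the lower bound of (II) and will also be needed for the backward induction below.

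For the upper bound of (II), I would argue by contradiction against the minimality of $\xi$. Suppose, contrary to the claim, that there exist arbitrarily large $k\in I\cap[k_1,\infty)$ with $p_{k+1}>p_k^{c_{k+1}}+p_k^{\theta c_{k+1}}$, and fix such a $k$ with $p_k$ large. The membership $k\in I$ gives $c_{k+1}(1-\theta)\geq 1+\epsilon(1-\theta)>1$, so by the mean value theorem
\[
(p_k+1)^{c_{k+1}}-p_k^{c_{k+1}}\geq c_{k+1}p_k^{c_{k+1}-1}>p_k^{\theta c_{k+1}}+2
\]
once $p_k$ is sufficiently large. Hence $J:=[p_k^{c_{k+1}},\,p_k^{c_{k+1}}+p_k^{\theta c_{k+1}}]$ lies inside $[p_k^{c_{k+1}},\,(p_k+1)^{c_{k+1}}-1]$. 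By \textup{(\dag)} the interval $J$ contains at least $dp_k^{\theta c_{k+1}}/\log(p_k^{c_{k+1}})$ primes, so Lemma~\ref{Lemma-Matomaki3} applied with $X:=p_k^{c_{k+1}}$, $\eta:=\theta$ (fixing any $\epsilon_0\in(0,1-\theta)$), and $(e_j)_{j\geq 2}:=(c_{k+j})_{j\geq 2}$ produces primes $q_1\in J$ and $q_2,q_3,\ldots$ satisfying $q_j^{c_{k+j}}\leq q_{j+1}<(q_j+1)^{c_{k+j}}-1$ for every $j\geq 1$.

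Mimicking the proof of Theorem~\ref{Theorem:Matomaki}, the limit $\xi':=\lim_{j\to\infty}q_j^{1/C_{k+j}}$ exists and satisfies $\lfloor(\xi')^{C_{k+j}}\rfloor=q_j$ for all $j\geq 1$. Because $q_1\leq(p_k+1)^{c_{k+1}}-1$ and $q_1\geq p_k^{c_{k+1}}$, one gets $p_k^{1/C_k}\leq\xi'<(p_k+1)^{1/C_k}$, so $\lfloor(\xi')^{C_k}\rfloor=p_k$; a backward induction using the bounds $p_{j-1}^{c_j}\leq p_j<(p_{j-1}+1)^{c_j}-1$ from the first paragraph then gives $\lfloor(\xi')^{C_j}\rfloor=p_j$ for every $j\leq k$. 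Thus $\xi'\in\mathcal{W}(C_k)$. Finally,
\[
(\xi')^{C_{k+1}}\leq q_1+1\leq p_k^{c_{k+1}}+p_k^{\theta c_{k+1}}+1<p_{k+1}\leq\xi^{C_{k+1}},
\]
so $\xi'<\xi$, contradicting the minimality of $\xi$.

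The main obstacle will be coordinating the three ingredients coherently: ensuring that the window $J$ furnished by \textup{(\dag)} sits inside $[p_k^{c_{k+1}},(p_k+1)^{c_{k+1}}-1]$ (this is precisely what the quantitative spectral gap $c_{k+1}\geq 1/(1-\theta)+\epsilon$ buys), that the prime-counting lower bound from \textup{(\dag)} feeds correctly into Lemma~\ref{Lemma-Matomaki3} with the shifted sequence $(c_{k+j})_{j\geq 2}$, and that the resulting $\xi'$ reproduces the initial primes $p_1,\ldots,p_k$ exactly so that it lies in $\mathcal{W}(C_k)$ and can serve as a strictly smaller competitor to $\xi$.
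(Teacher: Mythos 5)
The overall architecture of your argument matches the paper's: assume \eqref{CaseI} fails, deduce $p_k^{c_{k+1}}\leq p_{k+1}<(p_k+1)^{c_{k+1}}-1$ for $k\geq k_1$ via Lemma~\ref{lemma:IdealIneq}, and, if the upper bound in \eqref{CaseII} also failed infinitely often on $I$, splice in a prime chain from Lemma~\ref{Lemma-Matomaki3} at a large pivot $k$ to produce a competitor $\xi'<\xi$ in $\mathcal{W}(C_k)$. The forward construction of $(q_j)$ and the passage to $\xi'$, the verification that the window $[p_k^{c_{k+1}},p_k^{c_{k+1}}+p_k^{\theta c_{k+1}}]$ sits inside $[p_k^{c_{k+1}},(p_k+1)^{c_{k+1}}-1]$ using $(1-\theta)c_{k+1}>1$, and the backward recovery of $p_j$ for $k_1\leq j\leq k$ are all sound.

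However, there is a genuine gap at the final backward step. The inequalities $p_{j-1}^{c_j}\leq p_j<(p_{j-1}+1)^{c_j}-1$ that drive your backward induction were established only for $j-1\geq k_1$, since they came from applying Lemma~\ref{lemma:IdealIneq} under the assumption $\|\xi^{C_{j-1}}\|>1/(c_j p_{j-1}^{c_j-1})$, which you only have for $j-1\geq k_1$. Thus your induction carries $\lfloor(\xi')^{C_j}\rfloor=p_j$ from $j=k$ down to $j=k_1$, but stops there; nothing you wrote shows $\lfloor(\xi')^{C_j}\rfloor=p_j$ for $1\leq j<k_1$, and without that you cannot conclude $\xi'\in\mathcal{W}(C_k)$. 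This is exactly the point where the hypothesis \eqref{ineq:key1-assump1} (that $\xi^{C_m}\notin\mathbb{N}$ for every $m$) is actually needed: the paper sets $\delta=\min\{\xi^{C_j}-p_j\colon 1\leq j\leq k_1\}>0$, chooses the pivot index (their $m$, your $k$) large enough that $\delta\geq(2^{-1/C_m}\xi)^{C_{k_1}-C_m}$, and then uses the mean value theorem together with $|\xi^{C_m}-\zeta^{C_m}|\leq 1$ to show $\xi^{C_j}-\zeta^{C_j}\leq\zeta^{C_{k_1}-C_m}\leq\delta\leq\xi^{C_j}-p_j$, hence $p_j\leq\zeta^{C_j}<\xi^{C_j}<p_j+1$ for all $j\leq k_1$. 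You should add this closing argument. Also note, as a small point, that your stated reason for invoking \eqref{ineq:key1-assump1} — to apply Lemma~\ref{lemma:IdealIneq} — is a misattribution; that lemma needs no such hypothesis (if $x\in\mathbb{N}$ then $\|x\|=0$ and its premise is vacuously unsatisfiable). The hypothesis is needed only for the positivity of $\delta$ above.
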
 
 
\begin{proof}
Assume that both \eqref{CaseI} and \eqref{CaseII} are false. Since \eqref{CaseI} is false, there exists a positive integer $k_1$ such that for every $k\geq k_1$ we have
\begin{equation}\label{ineq:key1-f1}
\|\xi^{C_k}\|> \frac{1}{c_{k+1}\lfloor \xi^{C_k}\rfloor^{c_{k+1}-1}   }.
\end{equation}
Then, by Lemma~\ref{lemma:IdealIneq} with $x\coloneqq \xi^{C_k}$ and $c\coloneqq c_{k+1}$, for all $k\geq k_1$ we obtain 
\[
\lfloor \xi^{C_k} \rfloor ^{c_{k+1}} \leq  \lfloor \xi^{C_{k+1}}\rfloor <  (\lfloor \xi^{C_k} \rfloor +1)^{c_{k+1}}-1,
\]
which implies that for all $k\geq k_1$ 
\begin{equation}\label{ineq:key1-f2}
p_k^{c_{k+1}}\leq p_{k+1} <(p_k+1)^{c_{k+1}}-1.   
\end{equation}
Since \eqref{CaseII} is false,  there are infinitely many integers $k\in I$ such that 
\begin{equation}\label{ineq:key1-f3}
p_{k+1} \notin [p_{k}^{c_{k+1}} ,p_{k}^{c_{k+1}}+p_k^{\theta c_{k+1}}].
\end{equation}
Then, we set $\delta=  \min \{\xi^{C_k} -p_k \mid k=1,2,\ldots ,k_1\}$.  By \eqref{ineq:key1-assump1}, for every integer $k\leq k_1$, we have $p_k = \lfloor \xi^{C_k} \rfloor < \xi^{C_k}$, and hence $\delta>0$ holds. 

Let $\epsilon_0=1-\theta$, and let $X_1=X_1(\epsilon_0)$ be as in Lemma~\ref{Lemma-Matomaki3}. Take a sufficiently large integer $m\in I\cap[k_1,\infty)$  satisfying \eqref{ineq:key1-f3}, and
\begin{gather}\label{ineq:pm-X1} 
p_m^{c_{m+1}}\ge X_1, \\
\label{ineq:distance-xipk} \delta \geq  (2^{-1/C_m}\xi)^{C_{k_1}-C_m }. 
\end{gather}
We note that $I=\{k\in \mathbb{N}\colon c_{k+1} \geq 1/(1-\theta) +\epsilon \}$ is an infinite set by (\ref{G3}) and \eqref{Condition:epsilon}. 
Then, by \eqref{ineq:key1-f2} and \eqref{ineq:key1-f3}, we have 
\begin{equation}\label{ineq:keylem-0}
p_{m+1}>p_{m}^{c_{m+1}}+p_m^{\theta c_{m+1}}.
\end{equation}
 Let $q_m=p_m$. Since $\theta$ satisfies \eqref{dag} and $m$ is sufficiently large, there exists an absolute constant $d>0$ such that  
\[
\#([q_m^{c_{m+1}}, q_{m}^{c_{m+1}}+q_m^{\theta c_{m+1}}]\cap \mathcal{P}) \geq  \frac{dq_m^{\theta c_{m+1}}}{\log q_m^{c_{m+1}}}.
\]
Therefore, by \eqref{ineq:pm-X1} and Lemma~\ref{Lemma-Matomaki3} with $(e_2,e_3,\ldots)\coloneqq (c_{m+2},c_{m+3},\ldots)$, $\epsilon_0\coloneqq 1-\theta$, $X\coloneqq q_m^{c_{m+1}}$ , and $\eta\coloneqq \theta\in[1/2,1-\epsilon_0] $, there exists a sequence $(q_k)_{k\ge m}$ of prime numbers such that 
\begin{gather} \label{ineq:keylem-1}
  q_m^{c_{m+1}}\leq q_{m+1}\leq q_{m}^{c_{m+1}}+q_m^{\theta c_{m+1}}, \\ \label{ineq:keylem-2}
q_{k}^{c_{k+1}} \leq q_{k+1} < (q_{k}+1)^{c_{k+1}}-1
\end{gather}
for every integer $k\geq m+1$. We recall that $c_{m+1}\geq 1/(1-\theta)+\epsilon$ by the choice of $m$, and hence \eqref{ineq:keylem-1} implies that \eqref{ineq:keylem-2} also holds for $k= m$.  

Let $q_{k}=p_k$ for every $1\leq k\leq m-1 $. Then \eqref{ineq:key1-f2} implies that \eqref{ineq:keylem-2} is true for every integer $k\geq k_1$. Therefore, in a similar manner with \eqref{ineq:Matomaki-5}, we obtain 
\begin{equation}\label{ineq:keylem-3}
q_{k_1}^{1/C_{k_1}}\leq q_{k_1+1}^{1/C_{k_1+1}} \leq  \cdots  <(q_{k_1+1}+1)^{1/C_{k_1+1}}<(q_{k_1}+1)^{1/C_{k_1}},
\end{equation}
which implies that $\zeta= \lim_{k\to \infty} q_k^{1/C_k}$ exist. Furthermore, similary with the proof of Theorem~\ref{Theorem:Matomaki}, we obtain 
\begin{equation}\label{Equation:integerparts-zeta}
\lfloor \zeta^{C_k} \rfloor =q_k \in \mathcal{P}
\end{equation}
for every integer $k\ge k_1$. We note that 
\begin{equation}\label{ineq:zetaxi}
\zeta<\xi 
\end{equation}
holds since $\lfloor \zeta^{C_{m+1}}\rfloor=q_{m+1}<p_{m+1}=\lfloor \xi^{C_{m+1}} \rfloor$ by \eqref{ineq:keylem-0} and \eqref{ineq:keylem-1}.

Let us show that \eqref{Equation:integerparts-zeta} is true for every $k\in \mathbb{N}$. Since $\lfloor \zeta^{C_m} \rfloor = q_m=p_m =\lfloor \xi^{C_m} \rfloor$,  we have
\begin{equation}\label{ineq:zetaxi2}
1\geq |\xi^{C_m} - \zeta^{C_m}| = |\xi^{C_k C_m/C_k } - \zeta^{C_k C_m/C_k }|=(\xi^{C_k})^{C_m/C_k } - (\zeta^{C_k})^{ C_m/C_k } 
\end{equation}
for every $1\leq k\leq k_1$, where the last equation follows from \eqref{ineq:zetaxi}. The mean value theorem yields that for every $1\leq k\leq k_1\leq m$
\[
(\xi^{C_k})^{C_m/C_k } - (\zeta^{C_k})^{ C_m/C_k } \geq  (\xi^{C_k}-\zeta^{C_k})\zeta^{C_m-C_k},
\]  
and hence
\begin{equation}\label{ineq:keylem-5}
 \xi^{C_k}-\zeta^{C_k} = |\xi^{C_k} - \zeta^{C_k}|\leq \zeta^{C_k-C_m}\leq \zeta^{C_{k_1}-C_m}.
\end{equation}
 Since \eqref{ineq:zetaxi2} leads to $\zeta^{C_m} \ge \xi^{C_m} -1 \ge   \xi^{C_m}/2$, we have  $\zeta \ge  2^{-1/C_m}\xi $. 
Therefore, by \eqref{ineq:keylem-5}, we obtain 
\begin{equation}\label{ineq:keylem-6}
 \xi^{C_k}-  (2^{-1/C_m}\xi)^{C_{k_1}-C_m}\leq \xi^{C_k}- \zeta^{C_{k_1}-C_m}\leq \zeta^{C_k} .
\end{equation}
By the definition of $\delta$, for every $1\leq k\leq k_1$, we have
\begin{align*}
q_k&=p_k\leq  \xi^{C_k} -\delta_k \overset{\eqref{ineq:distance-xipk}}{\leq} \xi^{C_k}-(2^{-1/C_m}\xi)^{C_{k_1}-C_m}\\
&\overset{  \eqref{ineq:keylem-6}}{\leq} \zeta^{C_k}<\xi^{C_k}<\lfloor \xi^{C_k} \rfloor +1=p_k+1=q_k+1,
\end{align*}
and hence \eqref{Equation:integerparts-zeta} is true for all $k\in \mathbb{N}$. We conclude that $\zeta\in \mathcal{W}(C_k) $, a contradiction to the minimality of $\xi$.
\end{proof}

\begin{lemma}\label{lemma:xi-fractional}
Suppose that for every  $k\in \mathbb{N}$ we have \eqref{ineq:key1-assump1}.
Let $\epsilon$ and $I$ be as in Proposition~\ref{Proposition:General}. If \eqref{CaseI} is false, then for every sufficiently large  $k \in I$, we have
\begin{equation}\label{ineq:xi-result1}
\{ \xi^{C_k} \} \leq \frac{2}{c_{k+1}\lfloor \xi^{C_k}\rfloor^{(1-\theta)c_{k+1}-1}  }. 
\end{equation}
\end{lemma}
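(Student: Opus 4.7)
The plan is to pass from the bound on $p_{k+1}$ given by case \eqref{CaseII} of Lemma~\ref{lemma:key1} to a bound on $\{\xi^{C_k}\}$ using the mean value theorem. Since \eqref{CaseI} is assumed false, Lemma~\ref{lemma:key1} guarantees that \eqref{CaseII} holds, so there exists $k_0$ such that for every $k\in I\cap[k_0,\infty)$,
\[
p_{k+1}\leq p_k^{c_{k+1}}+p_k^{\theta c_{k+1}}.
\]

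Next I would use the identity $\xi^{C_{k+1}}=(\xi^{C_k})^{c_{k+1}}$, which is immediate from $C_{k+1}=C_k c_{k+1}$. Combined with the defining property $\xi^{C_{k+1}}<p_{k+1}+1$, this gives
\[
(\xi^{C_k})^{c_{k+1}}-p_k^{c_{k+1}}<p_k^{\theta c_{k+1}}+1\leq 2p_k^{\theta c_{k+1}},
\]
where the last inequality is valid for every sufficiently large $k\in I$ because $\theta c_{k+1}\geq 1/2\cdot 2=1$ and $p_k\to\infty$.

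The final step is to bound the left-hand side from below via the mean value theorem. There is some $y\in[p_k,\xi^{C_k}]$ with
\[
(\xi^{C_k})^{c_{k+1}}-p_k^{c_{k+1}}=c_{k+1}y^{c_{k+1}-1}\bigl(\xi^{C_k}-p_k\bigr)\geq c_{k+1}p_k^{c_{k+1}-1}\{\xi^{C_k}\},
\]
using $y\geq p_k$ and $c_{k+1}-1\geq 1>0$, together with $\xi^{C_k}-p_k=\{\xi^{C_k}\}$. Combining the two displays and dividing through yields
\[
\{\xi^{C_k}\}\leq\frac{2p_k^{\theta c_{k+1}}}{c_{k+1}p_k^{c_{k+1}-1}}=\frac{2}{c_{k+1}\lfloor\xi^{C_k}\rfloor^{(1-\theta)c_{k+1}-1}},
\]
which is \eqref{ineq:xi-result1}. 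There is no genuine obstacle here: the whole argument is a short and essentially mechanical application of Lemma~\ref{lemma:key1} and the mean value theorem, with the only minor point being the absorption of the additive $+1$ into $2p_k^{\theta c_{k+1}}$ for large $k$.
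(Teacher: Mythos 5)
Your proof is correct, and it arrives at the same bound as the paper but by a slightly different technical route. The paper's proof stays on the ``root side'': from $\xi^{C_{k+1}}\leq p_k^{c_{k+1}}+p_k^{\theta c_{k+1}}+1$ it takes $c_{k+1}$-th roots, factors out $p_k$, and uses the elementary inequality $(1+x)^{1/c}\leq 1+x/c$ (for $c\geq 1$, $x\geq 0$) to land directly on $\xi^{C_k}\leq p_k+2/(c_{k+1}p_k^{(1-\theta)c_{k+1}-1})$. You instead stay on the ``power side'': you bound the difference $(\xi^{C_k})^{c_{k+1}}-p_k^{c_{k+1}}$ from above by $2p_k^{\theta c_{k+1}}$ and from below via the mean value theorem by $c_{k+1}p_k^{c_{k+1}-1}\{\xi^{C_k}\}$, then divide. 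Both arguments are short and exploit the same convexity of $t\mapsto t^{c_{k+1}}$; your MVT version has the small advantage of mirroring the technique already used in the paper's Lemma~\ref{lemma:IdealIneq}, whereas the paper's version avoids introducing the intermediate point $y$. One cosmetic point: your justification for absorbing $+1$ into $2p_k^{\theta c_{k+1}}$ does not actually need $p_k\to\infty$; it already holds for all $k$ since $p_k\geq 2$ and $\theta c_{k+1}\geq 1$ give $p_k^{\theta c_{k+1}}\geq 1$.
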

\begin{proof}
Assume that \eqref{CaseI} is false. By Lemma~\ref{lemma:key1}, there exists $k_0>0$ such that for every $k\in I \cap[k_0,\infty)$, we obtain \eqref{ineq:caseB}. Then, for every sufficiently large $k\in I$, we have 
\begin{equation}\label{Inequality:xi-frac:1}
p_k=\lfloor \xi^{C_k} \rfloor \leq  \xi^{C_{k}},     
\end{equation}
Furthermore, by \eqref{ineq:caseB}, for every sufficiently large $k\in I$, we have
\[
\xi^{C_{k+1}}\leq p_{k+1}+1 \leq p_{k}^{c_{k+1}}+p_k^{\theta c_{k+1}}+1,
\]
which implies that 
\begin{equation}\label{Inequality:xi-frac:2}
\begin{aligned}
\xi^{C_k} 
&\leq  (p_{k}^{c_{k+1}}+p_k^{\theta c_{k+1}}+1)^{1/c_{k+1}} \\
&\leq p_k (1+2p_k^{(\theta-1)c_{k+1}} )^{1/c_{k+1}} \\
&\leq p_k + \frac{2}{c_{k+1}p_k^{(1-\theta)c_{k+1}-1 }}. 
\end{aligned}
\end{equation}
Therefore, by \eqref{Inequality:xi-frac:1} and \eqref{Inequality:xi-frac:2}, we conclude \eqref{ineq:xi-result1}.
\end{proof}

\subsection{Results from Diophantine approximation}

\begin{theorem}\label{Theorem-Dubickas1}Let $\alpha >1$ be an algebraic number, and $q$ be a positive integer. Suppose that $0<s_1<s_2<\cdots$ is a sequence of positive integers. Then, either for some $m\geq 0$ the number $\alpha^{s_m}$ is a Pisot number or for each $\epsilon>0$  there exists a positive integer $k_0 = k_0(\epsilon)$ such that 
\[
\| q\alpha^{s_k}  \|> e^{-\epsilon s_k }
\]
for every $k\geq k_0$.
\end{theorem}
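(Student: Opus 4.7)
The plan is to prove the contrapositive. Suppose there exist $\epsilon>0$ and indices $k_1<k_2<\cdots$ with $\|q\alpha^{s_{k_j}}\|\le e^{-\epsilon s_{k_j}}$; the goal is to produce an $m$ for which $\alpha^{s_m}$ is a Pisot number. Let $\alpha=\alpha_1,\dots,\alpha_d$ be the Galois conjugates of $\alpha$ over $\mathbb{Q}$, let $a_0\ge 1$ be the leading coefficient of its minimal polynomial in $\mathbb{Z}[X]$, and for each $j$ let $n_j$ denote the nearest integer to $q\alpha^{s_{k_j}}$. The case $d=1$ is handled separately, using that $\alpha\in\mathbb{Q}_{>1}$ and an elementary divisibility argument to force $\alpha^{s_{k_j}}\in\mathbb{Z}_{\ge 2}$ for $j$ large, which is Pisot of degree $1$.

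For $d\ge 2$, the quantity $\gamma_j:=a_0^{s_{k_j}}(q\alpha^{s_{k_j}}-n_j)$ is a nonzero algebraic integer, so the absolute value of its $\mathbb{Q}(\alpha)/\mathbb{Q}$-norm is at least $1$. Combining this with the trivial bound $|q\alpha_i^{s_{k_j}}-n_j|\le 3q\max(|\alpha_i|,\alpha)^{s_{k_j}}$ (which follows from $|n_j|\le q\alpha^{s_{k_j}}+1$ and the triangle inequality) and the Diophantine hypothesis $|q\alpha^{s_{k_j}}-n_j|\le e^{-\epsilon s_{k_j}}$, then taking $s_{k_j}$-th roots and letting $j\to\infty$ yields the structural inequality
\[
a_0^d\prod_{i=2}^{d}\max(|\alpha_i|,\alpha)\;\ge\;e^{\epsilon}. \qquad(\ast)
\]

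The main obstacle is that $(\ast)$ is essentially automatic --- each factor on the left is already $\ge\alpha>1$ --- and does not by itself distinguish the Pisot case from the non-Pisot case. To upgrade the conclusion, my plan is to exploit the hypothesis for infinitely many $j$ simultaneously. Concretely, I would first pigeonhole to a subsequence along which the Galois orbit structure of $\alpha^{s_{k_j}}$ stabilizes, so that $\mathbb{Q}(\alpha^{s_{k_j}})$ has a fixed degree and a fixed set of embedded conjugates, and then rerun the norm argument inside this possibly smaller subfield, producing a sharpened version of $(\ast)$ in which only the genuine conjugates of $\alpha^{s_{k_j}}$ appear (collisions of $\alpha_i^{s_{k_j}}$'s having been removed). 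In parallel, viewing $q\alpha^{s_{k_j}}\approx n_j$ as the smallness of the linear form $\alpha^{s_{k_j}}X-Y$ at $(X,Y)=(q,n_j)\in\mathbb{Z}^2$ and considering its Galois embedded forms $L_i(X,Y)=\alpha_i^{s_{k_j}}X-Y$, one would appeal to Schmidt's subspace theorem (in a form accommodating the $j$-dependence of the coefficients) to force algebraic relations among these forms. Together, these refinements should force both $a_0=1$ and $|\alpha_i|<1$ for every conjugate $\alpha_i$ in the Galois orbit of $\alpha^{s_m}$ distinct from $\alpha^{s_m}$, which is exactly the Pisot property at $m=k_j$ for $j$ large. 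The principal technical difficulty I anticipate is making the subspace-theorem step rigorous against the moving coefficients $\alpha_i^{s_{k_j}}$, for which a uniform variant or an iterated application along the $j$-sequence, rather than a one-shot invocation, will be required.
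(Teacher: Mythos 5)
Your proposal is not a complete proof; it is a correct first reduction followed by an acknowledged gap, and it also contains one outright error. For context, the paper does not reprove this statement either: it cites \cite[Lemma~6]{Dubickas2022}, which in turn rests on the Corvaja--Zannier theorem \cite{CorvajaZannier}, whose engine is indeed Schmidt's subspace theorem. So your instinct to reach for the subspace theorem is aligned with the actual provenance of the result. But pointing at the subspace theorem is very far from invoking it correctly here.

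Two concrete problems. First, the $d=1$ case is not elementary, contrary to what you assert. If $\alpha=a/b$ in lowest terms with $b\ge 2$, then $q\alpha^{s_{k}}$ has denominator $b^{s_k}/\gcd(q,b^{s_k})$, which is $\ge 2$ for all large $k$; no divisibility argument forces $\alpha^{s_k}\in\mathbb{Z}$. All it gives is $\|q\alpha^{s_k}\|\ge c\, b^{-s_k}$, which is perfectly compatible with $\|q\alpha^{s_k}\|\le e^{-\epsilon s_k}$ for any $\epsilon<\log b$. Ruling this out (equivalently, showing $\liminf_k\|q\alpha^{s_k}\|^{1/s_k}=1$ for non-integral rationals) is precisely one of the nontrivial consequences of Corvaja--Zannier; it cannot be dispatched before the main argument. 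Second, and more centrally, the entire content of the theorem lives in the step you defer. Your inequality $(\ast)$ is, as you say yourself, vacuous, and the subsequent plan --- pigeonhole the Galois orbit structure, then ``appeal to Schmidt's subspace theorem (in a form accommodating the $j$-dependence of the coefficients)'' --- is exactly the hard part. The subspace theorem is a statement about a \emph{fixed} finite collection of linear forms; the forms $L_i^{(j)}(X,Y)=\alpha_i^{s_{k_j}}X-Y$ change with $j$. Corvaja--Zannier overcome this by a quite different and delicate construction (roughly: working with a fixed system of forms in many variables built from several consecutive powers, combined with an $S$-unit/height analysis to extract structure from the finitely many exceptional subspaces), not by a direct application to the moving forms you wrote down. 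You have named the obstacle accurately, but naming it is not overcoming it; as it stands the proposal does not prove the theorem.
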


\begin{proof}
See \cite[Lemma~6]{Dubickas2022} which is given by Dubickas. This theorem essentially follows from the result \cite{CorvajaZannier} by Corvaja and Zannier.
\end{proof}

By Lemmas~\ref{lemma:key1} and \ref{lemma:xi-fractional}, both cases \eqref{CaseI} and \eqref{CaseII} imply that  
\[
\limsup_{k\to \infty} \|\xi^{C_k}\|^{1/C_k} <1.
\]
By combining this inequality and Theorem~\ref{Theorem-Dubickas1} with $q=1$ and $s_k=C_k$, either $\xi$ is transcendental or there exists $m\in \mathbb{N}$ such that $\xi^{C_m}$ is a Pisot number. In the latter case, we apply Baker's theorem. We say that the \textit{height} of an algebraic number is the maximum of the absolute values of the relatively prime integer coefficients in its minimal polynomial.

\begin{theorem}[Baker's theorem {\cite[Theorem 3.1]{Baker}}]\label{Theorem:Baker}
Let $\alpha_1,\ldots,\alpha_n$ be non-zero algebraic numbers with degrees at most $d$ and heights at most $A$. Further, let $\beta_0, \beta_1,\ldots, \beta_n$ be algebraic numbers with degrees at most $d$ and heights at most $B(\ge 2)$. Let 
\[
\Lambda = \beta_0 +\beta_1 \log \alpha_1 + \cdots + \beta_n\log \alpha_n.
\]
Then either $\Lambda=0$ or $|\Lambda|>B^{-C}$, where $C$ is an effectively computable number depending only on $n$, $d$, $A$ and the original determinations of the logarithms.
\end{theorem}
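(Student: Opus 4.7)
The plan is to follow Baker's method of linear forms in logarithms and argue by contradiction. Suppose $\Lambda \neq 0$ yet $|\Lambda| \leq B^{-C}$ for a sufficiently large constant $C = C(n, d, A)$ to be chosen at the end. After a harmless normalisation (dividing through by one of the $\log \alpha_j$ and absorbing a constant into $\beta_0$) one may assume $\beta_n = -1$, so that $\log \alpha_n = \beta_0 + \beta_1 \log \alpha_1 + \cdots + \beta_{n-1} \log \alpha_{n-1} - \Lambda$; equivalently, $\alpha_n$ is extraordinarily close, in an exponential sense, to $e^{\beta_0}\prod_{j<n}\alpha_j^{\beta_j}$.

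The central object is an auxiliary function
\[
\Phi(z_0, z_1, \ldots, z_{n-1}) = \sum_{\lambda_0 = 0}^{L_0} \cdots \sum_{\lambda_{n-1} = 0}^{L_{n-1}} p(\lambda_0, \ldots, \lambda_{n-1})\, z_0^{\lambda_0}\, \alpha_1^{\lambda_1 z_1} \cdots \alpha_{n-1}^{\lambda_{n-1} z_{n-1}},
\]
with integer coefficients $p(\lambda)$ to be determined. I would apply Siegel's lemma, with the parameters $L_j$, the order of vanishing $M$, and the number $K$ of test points balanced so that all mixed partial derivatives of $\Phi$ of total order at most $M$ vanish at every integer point $(l, l, \ldots, l)$ with $l \in \{1, \ldots, K\}$, while the coefficients $p(\lambda)$ remain bounded by a fixed power of $B$.

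The heart is the extrapolation step. Using the smallness of $|\Lambda|$, the defining relation for $\log \alpha_n$ lets one rewrite each derivative value of $\Phi$ at $(l, \ldots, l)$ as the corresponding value in which a single product $\alpha_n^{\lambda z}$ replaces the relevant combination of the other $\alpha_j$, up to an error that is exponentially small in $C$. A Schwarz-lemma estimate on a suitably chosen disc, combined with a Liouville-type lower bound forcing the algebraic numbers involved to be either zero or not too small, then forces $\Phi$ to vanish on a strictly larger grid of test points than originally designed. Iterating this extrapolation a bounded (in $n, d, A$) number of rounds eventually produces a non-degenerate Vandermonde determinant in the monomials $\alpha_j^{l}$ which would be forced to vanish but cannot, contradicting the non-triviality of the $p(\lambda)$ supplied by Siegel's lemma.

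The hard part, and what distinguishes Baker's method from earlier Gelfond--Schneider style arguments, will be arranging the parameters $L_j, M, K$ and the number of extrapolation rounds so that the final exponent $C$ is \emph{effectively computable} in terms of $n, d, A$ alone and carries no dependence on $B$. This requires careful tracking of heights and houses of the algebraic numbers appearing as coefficients and derivative values of $\Phi$, together with explicit control of the denominators introduced when clearing the $\beta_j$ from their minimal polynomials; the entire quantitative bookkeeping is driven by the need to keep $B$ outside the constant $C$.
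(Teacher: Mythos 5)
The paper does not prove this theorem; it simply cites it as \cite[Theorem~3.1]{Baker}, so there is no in-paper proof to compare your sketch against. What you have written is a reasonable high-level outline of Baker's method --- auxiliary polynomial in several variables with a $z_0^{\lambda_0}$ factor, Siegel's lemma to impose vanishing to high order on a grid, extrapolation via a Schwarz/maximum-modulus estimate combined with Liouville-type lower bounds, iteration, and a final non-vanishing determinant --- and the remark that the whole point is to keep $B$ out of the constant $C$ is well taken. But as it stands this is a plan, not a proof: the essential content of Baker's theorem lies exactly in the steps you label as ``the heart'' and ``the hard part,'' namely the explicit parameter choices for $L_j$, $M$, $K$, the quantitative Schwarz estimate, the Liouville bound in terms of $d$ and $A$, and the bookkeeping of heights and denominators that shows the extrapolation gains more than it loses. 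None of these are carried out, and in the absence of those estimates the argument does not yet establish $|\Lambda|>B^{-C}$. There is also a small gap in the normalisation: you may only divide through by $\beta_n$ if $\beta_n\neq 0$, and the degenerate case where all $\beta_j$ with $j\ge 1$ vanish (so $\Lambda=\beta_0$) needs to be dispatched separately --- it is trivial, but should be said. In the context of this paper the theorem is standard and a citation to Baker's book is the appropriate treatment; reproving it from scratch would be a substantial monograph-length undertaking, not a lemma.
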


\subsection{Lemmas for Pisot numbers}

\begin{lemma}\label{Lemma-GoldenRatio}
Let $\beta_1$ be a Pisot number of degree $2$, and let $\beta_2$ be the conjugate of $\beta_1$ over $\mathbb{Q}$. If $\beta_1+\beta_2=1$, then we have $\beta_1=\frac{1+\sqrt{5}}{2}$ and $\beta_2=\frac{1-\sqrt{5}}{2}$. 
\end{lemma}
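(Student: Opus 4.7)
The plan is to exploit the fact that a Pisot number of degree $2$ is a quadratic algebraic integer, so its minimal polynomial has a very constrained shape, and the hypothesis $\beta_1+\beta_2=1$ leaves exactly one free integer parameter, namely the product $\beta_1\beta_2$.

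First I would observe that since $\beta_1$ is an algebraic integer of degree $2$ whose Galois conjugate is $\beta_2$, its minimal polynomial over $\mathbb{Q}$ is
\[
(X-\beta_1)(X-\beta_2)=X^2-(\beta_1+\beta_2)X+\beta_1\beta_2=X^2-X+n,
\]
where $n\coloneqq\beta_1\beta_2\in\mathbb{Z}$. Next I would note that $\beta_2$ must be real: if $\beta_2$ were a non-real complex number, then $\beta_2=\overline{\beta_1}$ would force $|\beta_2|=|\beta_1|>1$, contradicting the Pisot condition $|\beta_2|<1$. Hence the discriminant $1-4n$ is non-negative and, by the quadratic formula,
\[
\beta_1=\frac{1+\sqrt{1-4n}}{2},\qquad \beta_2=\frac{1-\sqrt{1-4n}}{2}.
\]

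Then I would read off the two constraints imposed by $\beta_1$ being Pisot. The inequality $\beta_1>1$ is equivalent to $\sqrt{1-4n}>1$, i.e.\ $n<0$. The inequality $|\beta_2|<1$ reduces to $\beta_2>-1$ (the other side $\beta_2<1$ is automatic), which gives $\sqrt{1-4n}<3$, i.e.\ $n>-2$. Combining these with $n\in\mathbb{Z}$ forces $n=-1$, so the minimal polynomial is $X^2-X-1$, and $\beta_1=(1+\sqrt{5})/2$, $\beta_2=(1-\sqrt{5})/2$, as claimed.

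There is no real obstacle here: the argument is a short case analysis on the single integer $n$. The only point that requires a moment of care is ruling out the complex-conjugate case for $\beta_2$, but this is immediate from the definition of a Pisot number.
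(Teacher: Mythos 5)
Your proof is correct and follows essentially the same approach as the paper: write down the minimal polynomial $X^2 - X + n$ with $n = \beta_1\beta_2 \in \mathbb{Z}$, apply the quadratic formula, and use the Pisot inequalities to pin down the single integer parameter. The paper's version is terser (it sets $N = -\beta_1\beta_2$ and simply invokes $|\beta_2| < 1$), whereas you spell out the intermediate bounds $-2 < n < 0$ and explicitly rule out the non-real conjugate case, but the underlying argument is identical.
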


\begin{proof}
Let $N=-\beta_1\beta_2\in \mathbb{Z}\setminus\{0\}$. Then, we have $(x-\beta_1)(x-\beta_2)= x^2-x -N$. Therefore, by solving $x^2-x -N=0$, we obtain
\[
\beta_1= \frac{1+\sqrt{1+4N}}{2}, \quad \beta_2= \frac{1-\sqrt{1+4N}}{2}.
\]
By $|\beta_2|<1$, we conclude that $N=1$. 
\end{proof}

\begin{lemma}[{\cite[Lemma 3]{Dubickas2002}}]\label{Lemma:PisotPowers}
Let $\ell\in \mathbb{N}$, and let $\beta$ be a Pisot number of degree $\ell$. For all $m\in \mathbb{N}$, $\beta^m$ is a Pisot number of degree $\ell$. 
\end{lemma}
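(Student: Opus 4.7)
The plan is to verify the three defining conditions of a Pisot number for $\beta^m$: that $\beta^m$ is a real algebraic integer greater than $1$, that it has degree $\ell$ over $\mathbb{Q}$, and that all of its $\mathbb{Q}$-conjugates other than itself lie in the open unit disc. The first condition is immediate, since $\beta$ being a real algebraic integer with $\beta>1$ forces the same for $\beta^m$.

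For the other two, let $\beta=\beta_1,\beta_2,\ldots,\beta_\ell$ denote the $\mathbb{Q}$-conjugates of $\beta$, so that $|\beta_j|<1$ for every $j\geq 2$. Since $\sigma(\beta^m)=\sigma(\beta)^m$ for every $\sigma\in\mathrm{Gal}(\overline{\mathbb{Q}}/\mathbb{Q})$, the $\mathbb{Q}$-conjugates of $\beta^m$ are precisely the distinct values appearing among $\beta_1^m,\ldots,\beta_\ell^m$. The crux is to show that $d:=[\mathbb{Q}(\beta^m):\mathbb{Q}]$ equals $\ell$. Since $\mathbb{Q}(\beta^m)\subseteq\mathbb{Q}(\beta)$ we have $d\mid\ell$. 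Suppose towards contradiction that $d<\ell$; then $[\mathbb{Q}(\beta):\mathbb{Q}(\beta^m)]=\ell/d\geq 2$, so the minimal polynomial of $\beta$ over $\mathbb{Q}(\beta^m)$ has at least two distinct roots. Each such root is a $\mathbb{Q}(\beta^m)$-conjugate of $\beta$, hence a fortiori a $\mathbb{Q}$-conjugate, so coincides with some $\beta_i$; and every such root is also a root of $X^m-\beta^m\in\mathbb{Q}(\beta^m)[X]$, so it satisfies $\beta_i^m=\beta^m$. This produces some index $j\neq 1$ with $\beta_j^m=\beta^m>1$, and taking moduli gives $|\beta_j|>1$, contradicting $|\beta_j|<1$.

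Therefore $d=\ell$, and the $\mathbb{Q}$-conjugates of $\beta^m$ are exactly $\beta_1^m,\ldots,\beta_\ell^m$. For each $j\geq 2$ we have $|\beta_j^m|=|\beta_j|^m<1$, which completes the verification. The main obstacle is the degree claim; once this is in hand, the Pisot property for $\beta^m$ transfers from the Pisot property for $\beta$ with no further work.
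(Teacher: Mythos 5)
Your proof is correct, and it takes a genuinely different route from the paper's. The paper works directly with the integer polynomial $Q(X)=\prod_{j=1}^{\ell}(X-\beta_j^m)\in\mathbb{Z}[X]$: if $\deg(\beta^m)<\ell$, then $Q=PR$ with $P$ the monic minimal polynomial of $\beta^m$ and $R$ a non-constant monic polynomial in $\mathbb{Z}[X]$ whose roots lie among $\{\beta_2^m,\ldots,\beta_\ell^m\}$, so that $R(0)$ is a non-zero integer with $|R(0)|=\prod_{j\in S}|\beta_j|^m<1$, a contradiction. You instead run a field-theoretic argument through the tower $\mathbb{Q}\subseteq\mathbb{Q}(\beta^m)\subseteq\mathbb{Q}(\beta)$ using the auxiliary polynomial $X^m-\beta^m$: if $[\mathbb{Q}(\beta):\mathbb{Q}(\beta^m)]\geq 2$, then the minimal polynomial of $\beta$ over $\mathbb{Q}(\beta^m)$ (which divides $X^m-\beta^m$) has a second root, necessarily some $\mathbb{Q}$-conjugate $\beta_j$ with $j\neq 1$ satisfying $\beta_j^m=\beta^m$, whence $|\beta_j|=|\beta|>1$, again a contradiction. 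Both arguments reduce the degree claim to the defining inequality $|\beta_j|<1$ for $j\geq 2$, but the paper derives the contradiction from integrality of a product of small conjugates (a norm-type argument), while you derive it by showing that a small conjugate cannot share an $m$-th power with $\beta$ itself. The paper's route is slightly more elementary, needing only that $R\in\mathbb{Z}[X]$; yours uses a bit more Galois-theoretic machinery (degree towers, conjugates over an intermediate field) but in return makes explicit that the $\beta_j^m$ are pairwise distinct and are exactly the conjugates of $\beta^m$, a point the paper leaves implicit.
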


\begin{proof}
Let $\beta_1=\beta,\beta_2,\ldots, \beta_\ell$ be all the conjugates of $\beta$. Let 
\begin{equation}\label{Equation:PisotPowers1}
Q(X)=\prod_{j=1}^\ell(X-\beta_j^m)\in \mathbb{Z}[X], 
\end{equation}
and let $P\in \mathbb{Z}[X]$ be the minimum polynomial of $\beta^m$. Since $\beta^m$ is an algebraic integer,  we may assume that $P$ is a monic polynomial. It is clear that $\deg (\beta^m)\leq \ell$, and so we now suppose that $\deg (\beta^m)<\ell$. By this supposition, there exists a non-constant polynomial $R\in \mathbb{Z}[X]$  such that $Q(X)=P(X)R(X)$. By \eqref{Equation:PisotPowers1},  $R(X)=\prod_{j\in S} (X-\beta_j^m)$ for some non-empty set $S\subset \{2,3,\ldots, \ell\}$.  We note that $R(0)$ is an integer, but 
\[
0<\prod_{j\in S}|\beta_j^m|  <1 
\] 
since $\beta$ is a Pisot number, a contradiction. 
\end{proof}

\begin{lemma}\label{Lemma:Trace} Let $\beta$ be a Pisot number of degree $\ell$, and let $\beta_1=\beta, \beta_2,\ldots, \beta_\ell$ be the conjugates of $\beta$ over $\mathbb{Q}$. Then, there exists $s_0=s_0(\beta)>0$ such that for all integers $s\geq s_0$, if $\{\beta^{s} \}<1/2$, then  $\mathrm{Tr}(\beta^s)=\beta_{1}^{s}  +\cdots + \beta_{\ell}^{s }= \lfloor \beta^s \rfloor$.
\end{lemma}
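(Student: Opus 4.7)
The plan is to exploit the defining property of a Pisot number: the non-principal conjugates $\beta_2,\dots,\beta_\ell$ all lie strictly inside the open unit disk. I set $\rho := \max_{2 \leq j \leq \ell} |\beta_j| < 1$ and introduce the remainder
\[
T_s := \beta_2^s + \cdots + \beta_\ell^s,
\]
which then satisfies the crude bound $|T_s| \leq (\ell-1)\rho^s$. Because $\rho < 1$ and $\ell$ is fixed, I can choose $s_0 = s_0(\beta)$ so large that $|T_s| < 1/2$ for every integer $s \geq s_0$. This is the only way the Pisot hypothesis enters.

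Next I will use that $\beta^s$ is an algebraic integer, so $\mathrm{Tr}(\beta^s) = \beta^s + T_s$ is a rational integer. Writing $\beta^s = \lfloor \beta^s \rfloor + \{\beta^s\}$ and rearranging yields
\[
\mathrm{Tr}(\beta^s) - \lfloor \beta^s \rfloor = \{\beta^s\} + T_s,
\]
and the left-hand side is an integer. Under the hypothesis $\{\beta^s\} < 1/2$ together with the bound $|T_s| < 1/2$ obtained above, the right-hand side lies in the open interval $(-1/2,\,1)$. The only integer there is $0$, which forces $\mathrm{Tr}(\beta^s) = \lfloor \beta^s \rfloor$, as required.

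The argument is essentially routine and presents no real obstacle; the hypothesis $\{\beta^s\} < 1/2$ is exactly what excludes the alternative $\{\beta^s\} + T_s = 1$, which would otherwise occur whenever $\{\beta^s\}$ is close to $1$ and $T_s$ is slightly positive, so that the nearest integer to $\beta^s$ is $\lfloor \beta^s \rfloor + 1$ rather than $\lfloor \beta^s \rfloor$. One trivial point worth noting is that $T_s$ is real: complex conjugation permutes the $\beta_j$ and commutes with taking $s$-th powers, so $T_s$ is fixed by complex conjugation.
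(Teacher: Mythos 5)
Your proof is correct and follows essentially the same route as the paper: bound $T_s=\beta_2^s+\cdots+\beta_\ell^s$ by a geometric decay coming from the Pisot property, then use the hypothesis $\{\beta^s\}<1/2$ to pin down the ambient integer. Your packaging of the last step as ``the integer $\mathrm{Tr}(\beta^s)-\lfloor\beta^s\rfloor=\{\beta^s\}+T_s$ lies in $(-1/2,1)$, hence is $0$'' is marginally cleaner than the paper's explicit case split on the sign of $T_s$ (the paper uses the slightly stronger bound $|T_s|<1/4$, which is not needed).

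One detail you leave implicit and the paper does not: you write $\mathrm{Tr}(\beta^s)=\beta^s+T_s$ as if it were automatic. Under the paper's convention, $\mathrm{Tr}(\beta^s)$ is defined as the sum of the conjugates of $\beta^s$ \emph{itself} over $\mathbb{Q}$, so the identity $\mathrm{Tr}(\beta^s)=\beta_1^s+\cdots+\beta_\ell^s$ requires knowing that $\beta^s$ has degree exactly $\ell$, i.e.\ that the powers $\beta_1^s,\ldots,\beta_\ell^s$ are pairwise distinct; the paper obtains this from Lemma~\ref{Lemma:PisotPowers}. (What is automatic, without that lemma, is only that $\beta_1^s+\cdots+\beta_\ell^s$ is a rational integer, being a symmetric function of the conjugates of the algebraic integer $\beta$; your interval argument then does correctly give $\beta_1^s+\cdots+\beta_\ell^s=\lfloor\beta^s\rfloor$.) So the core computation is fine, but the first equality in the lemma's conclusion deserves the explicit citation of Lemma~\ref{Lemma:PisotPowers}.
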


\begin{proof}It is clear that $\mathrm{Tr}(\beta^s)=\beta_{1}^{s}  +\cdots + \beta_{\ell}^{s}$ from Lemma~\ref{Lemma:PisotPowers}. Since $\beta$ is a Pisot number, we have $|\beta_j|<1$ for all $j=2,3,\ldots, \ell$. Therefore, there exists $s_0=s_0(\beta)>0$  such that for all integers $s\geq s_0$ we have $|\beta_2^s+\cdots + \beta_\ell^s|<1/4$. We now take an arbitrary integer $s$ with $s\geq s_0$. Suppose that $\{\beta^s\}<1/2$. 

Since $\beta_1^s+\cdots + \beta_\ell^s$ is a non-zero integer from the theory of fields and  
\[
\beta^s=\beta_1^s= \beta_1^s+\cdots + \beta_\ell^s- (\beta_2^s+\cdots + \beta_\ell^s),
\]
we see that 
\[
\{\beta^s\}= \begin{cases}
-(\beta_2^s+\cdots + \beta_\ell^s) & \text{if $\beta_2^s+\cdots + \beta_\ell^s\leq 0$},\\
1- (\beta_2^s+\cdots + \beta_\ell^s) & \text{otherwise}.
\end{cases}
\]
The latter case naver happens since $\{\beta^s\}<1/2$ and $|\beta_2^s+\cdots + \beta_\ell^s|<1/4$, and hence 
$\{\beta^s\}=-(\beta_2^s+\cdots + \beta_\ell^s)$. Therefore, we conclude that $ \lfloor \beta^s \rfloor=\beta^s-\{\beta^s\} =\beta_{1}^{s}  +\cdots + \beta_{\ell}^{s }$.
\end{proof}

\begin{lemma}[{\cite[Lemma~8]{BugeaudDubickas}}]\label{lemma-BD1}
Let $\alpha$ be a real number greater than $1$. Let $n$ and $m$ be positive integers.  If $\alpha^n$ and $\alpha^m$ are Pisot numbers, then $\alpha^{\gcd(n,m)}$ is a Pisot number.  
\end{lemma}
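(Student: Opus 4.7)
The plan is to set $d=\gcd(n,m)$, write $n=dn'$ and $m=dm'$ with $\gcd(n',m')=1$, and work directly with $\delta\coloneqq \alpha^d$, together with $\beta\coloneqq \alpha^n=\delta^{n'}$ and $\gamma\coloneqq \alpha^m=\delta^{m'}$. Since $\alpha>1$ we have $\delta>1$. To see that $\delta$ is an algebraic integer, note that $\delta$ is a root of the monic polynomial $X^{n'}-\beta$ whose coefficients lie in $\mathbb{Z}[\beta]$, a ring of algebraic integers; by transitivity of integrality, $\delta$ is itself an algebraic integer. It then remains only to verify the conjugate condition for $\delta$.

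Let $\delta=\delta_1,\delta_2,\ldots,\delta_k$ be the Galois conjugates of $\delta$ over $\mathbb{Q}$. For each $i$, fix an embedding $\sigma_i\colon \mathbb{Q}(\delta)\hookrightarrow \mathbb{C}$ with $\sigma_i(\delta)=\delta_i$; then $\delta_i^{n'}=\sigma_i(\beta)$ is a $\mathbb{Q}$-conjugate of $\beta$, and $\delta_i^{m'}=\sigma_i(\gamma)$ is a $\mathbb{Q}$-conjugate of $\gamma$. Since $\beta$ is Pisot, the condition $\delta_i^{n'}\neq \beta$ forces $|\delta_i^{n'}|<1$, hence $|\delta_i|<1$; the analogous statement holds with $\gamma$ and $\delta_i^{m'}$ in place of $\beta$ and $\delta_i^{n'}$. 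So it suffices to show that for every $i\geq 2$, at least one of $\delta_i^{n'}\neq \beta$ or $\delta_i^{m'}\neq \gamma$ holds.

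The main (in fact only) obstacle is this coprimality step. If both equalities held for some $i\geq 2$, then the ratio $\zeta\coloneqq \delta_i/\delta$ would satisfy $\zeta^{n'}=1$ and $\zeta^{m'}=1$, so the order of $\zeta$ would divide $\gcd(n',m')=1$, forcing $\zeta=1$ and thus $\delta_i=\delta$, a contradiction. Combining this with the discussion above shows that $|\delta_i|<1$ for every $i\geq 2$, so $\delta=\alpha^{\gcd(n,m)}$ is a Pisot number, as required. I note that the more naive approach of writing $1=an'+bm'$ by B\'{e}zout and deducing $\delta=\beta^a\gamma^b$ fails to give the Pisot property directly, since one of $a,b$ is typically negative and reciprocals of Pisot numbers need not be Pisot; the conjugate-theoretic argument above bypasses this difficulty.
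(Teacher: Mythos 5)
Your proof is correct and follows essentially the same approach as the paper: both reduce to the coprime case $\gcd(n',m')=1$ and exploit the fact that a conjugate $\delta_i\neq\delta$ with $\delta_i^{n'}=\delta^{n'}$ and $\delta_i^{m'}=\delta^{m'}$ would make $\delta_i/\delta$ a root of unity of order dividing $\gcd(n',m')=1$, hence trivial. Your write-up is, if anything, a bit more careful than the paper's (which phrases the reduction tersely and argues by contradiction from a conjugate of absolute value $>1$), but the underlying idea is identical.
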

\begin{proof}
We may assume that $\gcd(n,m)=1$ by replacing $n/\gcd(n,m)$ and $m/\gcd(n,m)$ with $n$ and $m$, respectively.  Since $\alpha^n$ is an algebraic integer, $\alpha$ is so. 

Suppose that $\alpha$ is not a Pisot number. Then, we may take a conjugate $\beta$ of $\alpha$ satisfying $|\beta|>1$ and $\beta\neq \alpha$. Thus, we have $\beta^n=\alpha^n$ since  $\beta^n$ is also a conjugate of $\alpha^n$ and $\alpha^n$ is a Pisot number. Therefore, $\beta=\alpha e^{2\pi i k/n}$ for some $k\in \{1,2,\ldots,n-1\}$. Similarly, we have $\beta^m=\alpha^m$, and hence 
\[
\alpha^m e^{2\pi i k m/n} =\beta^m=\alpha^m. 
\]
This implies  $\exp(2\pi i k m/n)=1$, a contradiction since $\gcd(n,m)=1$. 
\end{proof}

\begin{lemma}\label{Lemma:Mignotte}
Let $\beta$ be a Pisot number of degree $\ell\ge 2$ with conjugates $\beta_1=\beta, \beta_2,\ldots ,\beta_\ell$. For every $k$ with $ \beta_k \notin \mathbb{R}$, if $\beta_k=|\beta_k|e^{2\pi i \phi}$ for some $\phi\in \mathbb{R}$, then $\phi$ is irrataional. 
\end{lemma}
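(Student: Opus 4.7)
The plan is a short contradiction argument based on Lemma~\ref{Lemma:PisotPowers}. Assume toward a contradiction that $\phi = p/q \in \mathbb{Q}$, with $p \in \mathbb{Z}$, $q \in \mathbb{N}$.

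The first step is to look at the $q$-th power. Since $\beta_k = |\beta_k| e^{2\pi i \phi}$, one has $\beta_k^q = |\beta_k|^q e^{2\pi i p} = |\beta_k|^q \in \mathbb{R}$. The complex conjugate $\overline{\beta_k} = |\beta_k| e^{-2\pi i \phi}$ similarly satisfies $\overline{\beta_k}^q = |\beta_k|^q$. Because the minimal polynomial of $\beta$ over $\mathbb{Q}$ has rational (hence real) coefficients, $\overline{\beta_k}$ is itself a conjugate of $\beta$; and since $\beta_k \notin \mathbb{R}$ one has $\overline{\beta_k} \neq \beta_k$, so there is some index $j \neq k$ with $\beta_j = \overline{\beta_k}$. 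In particular $\beta_j^q = \beta_k^q = |\beta_k|^q$.

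The second step is to contradict this collision of $q$-th powers using the power lemma already proved. By Lemma~\ref{Lemma:PisotPowers}, $\beta^q$ is a Pisot number of degree $\ell$. Thus $\mathbb{Q}(\beta^q) \subseteq \mathbb{Q}(\beta)$ are both $\mathbb{Q}$-vector spaces of dimension $\ell$, hence equal. The $\ell$ distinct $\mathbb{Q}$-embeddings of this field into $\mathbb{C}$ send $\beta$ to $\beta_1, \ldots, \beta_\ell$, so they send $\beta^q$ to $\beta_1^q, \ldots, \beta_\ell^q$; these must therefore be the $\ell$ pairwise distinct conjugates of $\beta^q$. This forces $\beta_k^q \neq \beta_j^q$ for $k \neq j$, contradicting what was obtained above.

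There is no real obstacle here: once Lemma~\ref{Lemma:PisotPowers} is granted, the only content is that complex conjugation permutes the conjugates of $\beta$ nontrivially whenever $\beta_k \notin \mathbb{R}$, and that raising to the $q$-th power cannot collapse two distinct conjugates of a degree-$\ell$ Pisot number into one. The argument works uniformly for every non-real $\beta_k$ and any rational candidate $\phi = p/q$.
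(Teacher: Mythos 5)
Your proof is correct and follows essentially the same route as the paper's: both assume $\phi$ rational with denominator $q$, observe that $\overline{\beta_k}$ is a distinct conjugate whose $q$-th power collides with $\beta_k^q$, and derive a contradiction from Lemma~\ref{Lemma:PisotPowers} guaranteeing that the $q$-th powers of the conjugates of $\beta$ remain pairwise distinct. You merely spell out the field-embedding reason for that distinctness a bit more explicitly than the paper does.
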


\begin{proof}
Fix any $k$ with $\beta_k\notin \mathbb{R}$, and let $\beta_k=|\beta_k|e^{2\pi i \phi}$ for some $\phi\in \mathbb{R}$. Then, $\phi\notin \mathbb{Z}$. Then, there exists $k'\in [1,\ell]$ with $k\neq k'$ such that $\beta_{k'}=\overline{\beta_k}=|\beta_k| e^{-2\pi i \phi}$. Suppose that $\phi$ is rational and let $\phi=a/b$ for some $a\in \mathbb{Z}$ and $b\in \mathbb{N}$. Then, by Lemma~\ref{Lemma:PisotPowers},  $\beta_1^b,\ldots, \beta_\ell^b$ are pairwise distinct. However, we have
\[
\beta_k^b= |\beta_k|^b e^{2\pi i a}=|\beta_k|^b =|\beta_k|^b e^{-2\pi i a} = \beta_{k'}^b,
\]
a contradiction.
\end{proof}

\begin{remark}
This lemma follows from Mignotte's result \cite{Mignotte}, which states that for $N_1,\ldots, N_\ell \in \mathbb{Z}$ if $\beta_1^{N_1}\cdots \beta_\ell^{N_\ell}=1$ holds,  then $N_1=\cdots =N_\ell=0$. 
\end{remark}

\begin{lemma}[{\cite{Smyth}}]\label{Lemma:Smyth}
Let $\beta$ be a Pisot number of degree $\ell \geq 2$ with conjugates $\beta_1=\beta, \beta_2,\ldots ,\beta_\ell$. For $i\neq j$, if $|\beta_i|=|\beta_j|$, then $\beta_i=\overline{\beta_j}$. \end{lemma}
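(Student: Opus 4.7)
The plan is to follow Smyth's classical argument~\cite{Smyth}. First, I set up: since $\beta$ is Pisot, $\beta_1=\beta>1$ is the unique conjugate of modulus at least $1$, so the hypothesis $|\beta_i|=|\beta_j|$ with $i\neq j$ forces $i,j\geq 2$ and $|\beta_i|=|\beta_j|<1$. Let $K$ be the Galois closure of $\mathbb{Q}(\beta)$ in $\mathbb{C}$ with $G=\mathrm{Gal}(K/\mathbb{Q})$. Complex conjugation restricts to an element of $G$, so $\overline{\beta_k}=\beta_{k^{\ast}}$ for some index $k^{\ast}$; the hypothesis $|\beta_i|^2=|\beta_j|^2$ then translates into the algebraic identity $\beta_i\beta_{i^{\ast}}=\beta_j\beta_{j^{\ast}}$ in $K$.

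I argue by contradiction, assuming $\beta_i\neq\overline{\beta_j}=\beta_{j^{\ast}}$. Consider the ratio $\gamma=\beta_i/\beta_{j^{\ast}}\in K$; then $|\gamma|=1$ and $\gamma\neq 1$. The core of the proof is to establish that $\gamma$ is a root of unity. The plan for this is Smyth's: to show that the Mahler measure of $\gamma$ equals $1$, so that Kronecker's theorem forces $\gamma$ to be a root of unity. Here the Pisot hypothesis is crucial: the only Galois conjugate of $\beta$ of modulus strictly greater than $1$ is $\beta_1$ itself, which lets one bound, uniformly in $\sigma\in G$, the moduli of $\sigma(\beta_i)$ and $\sigma(\beta_{j^{\ast}})$ by passing to a symmetric function of the pairs $(\beta_k,\beta_{k^{\ast}})$. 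Once $\gamma^n=1$ is known for some $n\geq 2$, the contradiction closes quickly: the relation $\beta_i^n=\beta_{j^{\ast}}^n$ together with a Galois automorphism $\sigma\in G$ with $\sigma(\beta_i)=\beta_1$ gives $\beta_1^n=\sigma(\beta_{j^{\ast}})^n$, so $|\sigma(\beta_{j^{\ast}})|=\beta_1>1$; but $\sigma(\beta_{j^{\ast}})$ is itself a conjugate of $\beta$, and $\beta_1$ is the unique conjugate of modulus $\geq 1$, hence $\sigma(\beta_{j^{\ast}})=\beta_1=\sigma(\beta_i)$ and so $\beta_i=\beta_{j^{\ast}}=\overline{\beta_j}$, contrary to assumption.

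The main obstacle is the Kronecker step, namely proving $\gamma$ is a root of unity. A naive application of Kronecker fails because Galois automorphisms do not commute with complex conjugation, so one cannot simply pass moduli through $\sigma$ to conclude $|\sigma(\gamma)|=1$. Smyth's device is to replace $\gamma$ by a suitable $G$-symmetrization in the variables $\beta_k$ and $\beta_{k^{\ast}}$, producing a genuine algebraic integer all of whose conjugates lie on the unit circle, and only then to invoke Kronecker. I would reproduce this symmetrization carefully from~\cite{Smyth}; the remaining deductions, both the Pisot-based modulus comparison above and the case where $\beta_i$ or $\beta_j$ happens to be real (handled by a direct sign argument using the uniqueness of $\beta_1$ as the maximal-modulus conjugate, together with Lemma~\ref{Lemma:Mignotte} to control the arguments of non-real conjugates), are routine by comparison.
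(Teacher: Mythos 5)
Your route is genuinely different from the paper's, and the step that carries all the weight is left unproved. The paper never invokes Kronecker's theorem or Mahler measure. It works with the symmetric polynomial
\[
f(X)=\prod_{1\le i\le j\le \ell}\bigl(X-\beta_i\beta_j\bigr)\in\mathbb{Z}[X]
\]
and argues through root multiplicities. Under the hypotheses $|\beta_i|=|\beta_j|$ and $\beta_i\neq\overline{\beta_j}$, the factors $X-\beta_i\overline{\beta_i}$ and $X-\beta_j\overline{\beta_j}$ occur as \emph{distinct} entries of the product (the assumption $\beta_i\neq\overline{\beta_j}$ is exactly what keeps the two unordered index pairs from coinciding), so $\phi\coloneqq|\beta_i|^2=|\beta_j|^2$ is a root of $f$ of multiplicity at least two. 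Since $\phi$ is a nonzero algebraic integer with $|\phi|<1$, it has a conjugate $\phi'$ with $|\phi'|>1$, and because $f\in\mathbb{Z}[X]$ the multiplicity of $\phi'$ in $f$ is also at least two. But any root of $f$ of modulus exceeding $1$ must have the form $\beta_1\beta_k$ (as $\beta_1$ is the unique conjugate of modulus $\ge 1$), and such a root is simple: the $\beta_1\beta_k$ are pairwise distinct and no product $\beta_i\beta_j$ with $1<i\le j$ can exceed $1$ in modulus. This contradiction finishes the paper's proof.

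Your plan instead reduces everything to showing that $\gamma=\beta_i/\overline{\beta_j}$ is a root of unity. You correctly observe that a naive Kronecker application fails, since $|\sigma(\gamma)|$ need not equal $1$ for $\sigma\in G$ that do not commute with complex conjugation (e.g.\ $\sigma(\beta_i)=\beta_1$ while $|\sigma(\overline{\beta_j})|<1$), but you then outsource exactly this obstruction to an unspecified ``$G$-symmetrization to be reproduced carefully from~\cite{Smyth}.'' That symmetrization is the entire mathematical content of the lemma; nothing in your write-up actually produces an algebraic integer with all conjugates on the unit circle, so as written this is a genuine gap, not a routine detail. (Your closing deduction from $\gamma^n=1$ is fine, and in this paper's context it would also follow at once from Mignotte's theorem as quoted after Lemma~\ref{Lemma:Mignotte}, which forbids any relation $\beta_i^n\beta_{j^\ast}^{-n}=1$ with $n\neq 0$; but your argument never reaches that point.) I suggest replacing the Kronecker plan with the multiplicity argument on $f$ above: it is shorter, elementary, and uses nothing beyond the fact that $\beta_1$ is the only conjugate of modulus at least $1$.
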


\begin{proof}Let 
\begin{equation}\label{Equation:Smyth:1}
f(X)= \prod_{1\leq i\leq j \leq \ell} (X-\beta_i\beta_j). 
\end{equation}
Then, it is clear that $f(X)\in \mathbb{Z}[X]$ since $f(X)$ is symmetric and $\beta$ is an algebraic integer. Assume that $|\beta_i|=|\beta_j|$ but $\beta_i \neq \overline{\beta_j}$ for some $i\neq j$. Then, both $(X-\beta_i\overline{\beta_i})$ and $(X-\overline{\beta_j}\beta_j)$ appear in the product \eqref{Equation:Smyth:1}. Therefore, $(X-\phi)^2$ devides $f(X)$, where $\phi\coloneqq \beta_i\overline{\beta_i}=\overline{\beta_j}\beta_j$. Since $\phi$ is an algebraic integer, the product of all the conjugates of $\phi$ is a non-zero integer. In addition, $|\phi|=|\beta_i|^2<1$ since $\beta$ is a Pisot number. Therefore, we find one of the conjugates $\phi'$ of $\phi$ such that $|\phi'|>1$. Then, $(X-\phi')^2$ devides $f(X)$. However, a root of $f(X)$ with absolute value strictly greater than one forms as $\beta \beta_j$ for some $1\leq j\leq \ell$. All of such roots are distinct, a contradiction.  
\end{proof}

\begin{lemma}\label{Lemma:non-Liouville}
Let $\alpha$ be a non-real algebraic number. Suppose that there exists an irrational number $\psi\in (0,1)$ such that $\alpha=|\alpha|e^{2\pi i \psi}$. Then, there exists $\lambda=\lambda(\psi)>0$ such that, for all $N\ge 2$ and for all integers $p$ and $q$ with $\max(p,q)\leq N$, we have  
\begin{equation}\label{ineq:non-Liouville}
|q\psi-p| \geq N^{-\lambda}.
\end{equation}
\end{lemma}

\begin{proof}We now define $\log z=\log r + 2\pi i x$ if $z=re^{2\pi i x}$ for some $r\ge 0$ and $x\in [0,1)$. We oberve that $\log (\alpha/|\alpha|)$ and $\log (-1)$ are linearly independent over $\mathbb{Q}$ since $2\pi i \psi= \log (\alpha/|\alpha|)$, $\pi i =\log(-1)$, and $\psi$ is irrational. Therefore, Theorem~\ref{Theorem:Baker} with $\Lambda\coloneqq q \log(\alpha/|\alpha|)  -p\log (-1)$ implies that there is a constant $K>0$ such that for all $N\ge 2$ and for all integers $p$ and $q$ with $\max(p,q)\leq N$, we obtain 
\[
2\pi|q\psi -p  | =|q \log(\alpha/|\alpha|)  -p\log (-1)  |\ge  N^{-K}.
\]
This completes the proof of Lemma~\ref{Lemma:non-Liouville}. 
\end{proof}

\begin{lemma}[{\cite[Lemma~8]{Dubickas2022}}]\label{lemma-Dubickas2}
Let $\beta$ be a Pisot number of degree $\ell \geq 2$ with conjugates $\beta_1=\beta, \beta_2,\ldots ,\beta_\ell$ labelled so that $\beta_1>1>|\beta_2|\geq  \cdots \geq |\beta_\ell|$. Then, there is a real number $\lambda>0$ depending only on $\beta$ such that
\[
|\beta_{2}^n +\cdots +\beta_{\ell}^n|\geq |\beta_2|^n n^{-\lambda} 
\]
for each sufficiently large integer $n$.
\end{lemma}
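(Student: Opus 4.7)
My plan is to split the argument according to whether the second-largest modulus conjugate $\beta_2$ is real or complex. By Lemma~\ref{Lemma:Smyth}, two distinct conjugates cannot share a modulus unless they form a complex conjugate pair, so only two possibilities arise: either $\beta_2\in\mathbb{R}$ with $|\beta_3|<|\beta_2|$, or $\beta_2\notin\mathbb{R}$ with $\beta_3=\overline{\beta_2}$ and $|\beta_4|<|\beta_2|$ (the latter constraint vacuous when $\ell=3$; the case $\ell=2$ forces $\beta_2\in\mathbb{R}$).

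The real case is immediate by the triangle inequality: $|\beta_2^n+\cdots+\beta_\ell^n|\geq |\beta_2|^n-(\ell-2)|\beta_3|^n\geq |\beta_2|^n/2$ for all sufficiently large $n$, so any $\lambda>0$ works.

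The complex case is the substantial one. Writing $\beta_2=|\beta_2|e^{2\pi i\phi}$ one has $\beta_2^n+\beta_3^n=2|\beta_2|^n\cos(2\pi n\phi)$, so the problem reduces to a polynomial-in-$n$ lower bound for $|\cos(2\pi n\phi)|$. For each $n$ let $k_n$ be the nearest integer to $2n\phi-\tfrac12$ and consider the linear form in two logarithms
\[
\Lambda = 2n\log(\beta_2/|\beta_2|) - (2k_n+1)\log(-1)=\pi i\bigl(4n\phi-(2k_n+1)\bigr),
\]
so that $|1+(\beta_2/|\beta_2|)^{2n}|=|1-e^\Lambda|$ and $|\beta_2^n+\beta_3^n|=|\beta_2|^n\cdot|1+(\beta_2/|\beta_2|)^{2n}|$. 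The algebraic number $\beta_2/|\beta_2|$ has modulus one and, by Lemma~\ref{Lemma:Mignotte}, is not a root of unity, so $\Lambda\neq 0$; the choice of $k_n$ forces $|2k_n+1|=O(n)$. Applying Baker's theorem (Theorem~\ref{Theorem:Baker}) yields $|\Lambda|\geq n^{-C}$ for an effective constant $C$ depending only on $\beta$, whence $|\beta_2^n+\beta_3^n|\geq |\beta_2|^n n^{-C}/c_0$ for an absolute $c_0$. The remaining tail $|\beta_4^n+\cdots+\beta_\ell^n|=O(|\beta_4|^n)$ is exponentially smaller than $|\beta_2|^n n^{-C}$, so a triangle inequality and a slight enlargement to $\lambda>C$ completes the proof for large $n$.

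The main obstacle I anticipate is the clean setup of Baker's estimate: one has to identify $\alpha_1=\beta_2/|\beta_2|$ and $\alpha_2=-1$ as a valid pair of algebraic numbers with fixed heights and degrees (both determined by $\beta$), verify that non-vanishing of $\Lambda$ really does follow from the irrationality of $\phi$, and confirm that $|2k_n+1|=O(n)$ uniformly, so that the resulting exponent $C$ depends only on $\beta$ rather than on $n$ or $k_n$. Once this is in place, the exponential domination of the tail $\beta_4^n+\cdots+\beta_\ell^n$ is a routine triangle-inequality bookkeeping step.
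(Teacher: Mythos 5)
Your proposal is correct and follows essentially the same route as the paper's proof: split on whether $|\beta_2|>|\beta_3|$ (equivalently, $\beta_2$ real, via Lemma~\ref{Lemma:Smyth}) and handle that case trivially, then in the complex case invoke Lemma~\ref{Lemma:Mignotte} for irrationality of $\phi$ and Baker's theorem (Theorem~\ref{Theorem:Baker}) on the linear form in $\log(-1)$ and $\log(\beta_2/|\beta_2|)$, absorbing the tail $\beta_4^n+\cdots+\beta_\ell^n$ by triangle inequality. The only cosmetic difference is that you bound $|\cos(2\pi n\phi)|$ via $|1-e^{\Lambda}|$ whereas the paper passes through $\|1/2-2n\phi\|$; the two are equivalent up to constants.
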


\begin{proof}
We may assume that $\ell \ge 3$. 
If $|\beta_2|>|\beta_3|$, then the lemma is trivial since 
\[
|\beta_{2}^n +\cdots +\beta_{\ell}^n|\geq |\beta_{2}|^n - \ell|\beta_{3}|^n \ge |\beta_{2}|^n/2 
\] 
for every sufficiently large $n$. 
If $|\beta_2|=|\beta_3|$, then Lemma~\ref{Lemma:Smyth} yields that $\beta_3=\overline{\beta_2}$ and $|\beta_2|>|\beta_4|$. Especially, $\beta_2$ is not real. Then, we obtain
\[
|\beta_{2}^n +\cdots +\beta_{\ell}^n|\geq |\beta_2|^n \biggl(|1 + (\beta_3/\beta_2)^n| - \ell (|\beta_4|/|\beta_2|)^n \biggr).
\]
Let $\beta_2=|\beta_2|e^{2\pi i \phi}$ for some $\phi\in (0,1)$. Further, Lemma~\ref{Lemma:Mignotte} implies that $\phi$ is irrational. For every $n\in \mathbb{N}$, we have 
\[
|1 + (\beta_3/\beta_2)^n|=|1+e^{4\pi i n \phi}| = 2 |\cos(2\pi n \phi )  |=2 |\sin(\pi/2 -2\pi n \phi ) |\ge 4  \left\|1/2 - 2 n \phi     \right \|.
\]
Lemma~\ref{Lemma:non-Liouville} implies that there are constants $K_1,K_2>0$ such that for every $n\ge 2$, we obtain 
\[
\left\|1/2 - 2 n \phi     \right \|\ge K_1 n^{-K_2}.
\]
Therefore, there exists $\lambda>0$ such that for every sufficiently large $n$, we have 
\[
|\beta_{2}^n +\cdots +\beta_{\ell}^n|
\ge |\beta_2|^n n^{-\lambda}. \qedhere
\]
\end{proof}

In \cite{Dubickas2022}, Dubickas proved the transcendency of certain constants deduced from some recurrent sequences by applying \cite[Lemma~6]{Dubickas2022} and \cite[Lemma~8]{Dubickas2022} (Theorem~\ref{Theorem-Dubickas1} and Lemma~\ref{lemma-Dubickas2}). This is an improvement on the result regarding irrationality by Wagner and Ziegler \cite{WagnerZiegler}.  A basic idea for the proof is derived from these works and \cite{BugeaudDubickas} by Bugeaud and Dubickas. They studied the set of limit points of $\|\alpha^n\|^{1/n}$ $(n=1,2,3,\ldots)$. 

\section{Proof of Proposition~\ref{Proposition:General}}\label{Section:ProofGenProp}

\begin{lemma}\label{Lemma:PisotNumber}
Let $\alpha>1$ be an algebraic number, and let $0<s_1<s_2<\cdots$ be a sequence of positive real numbers satisfying that $s_k\in \mathbb{N}$ for sufficiently large $k$. Let $(t_k)_{k\ge 1}$ be a sequence of positive real numbers with $\liminf_{k\to\infty} t_k>0$. Suppose that there exists a constant $K>0$ such that for every sufficiently large $k\in \mathbb{N}$ 
\begin{equation}\label{ineq:PisotNumber-1}
\|\alpha^{s_k}\| \leq K \alpha^{-s_k t_{k}}.  
\end{equation}
Then,  there exists $g\in \mathbb{N}$ such that $\alpha^g$ is a Pisot number of degree $\ell$ with
 \[
1\leq \ell \leq 1+\biggl(\limsup_{k\to \infty} t_k\biggr)^{-1}
 \]
 and for every sufficiently large $k$, we have $g\mid s_k$.  Furthermore, if we have 
\[
\ell=1+\biggl(\limsup_{k\to \infty} t_k\biggr)^{-1}, 
\]
then $\alpha^g$ is a unit Pisot number.
\end{lemma}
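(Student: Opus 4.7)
The plan is to proceed in four stages: first produce a single exponent $m$ with $\alpha^{s_m}$ Pisot; then isolate the minimal such exponent $g$ and show $g\mid s_k$ for every large $k$; next bound $\ell$ via Lemma~\ref{lemma-Dubickas2}; finally upgrade to a unit in the extremal case.

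First I would apply Theorem~\ref{Theorem-Dubickas1} with $q=1$ to the integer tail of $(s_k)$. Picking any $\epsilon$ with $0<\epsilon<(\liminf_k t_k)\log\alpha$, the hypothesis yields $\|\alpha^{s_k}\|\le K e^{-\epsilon s_k}$ for all large $k$, which contradicts the non-Pisot alternative of Theorem~\ref{Theorem-Dubickas1}; hence $\alpha^{s_m}$ is Pisot for some $m$. Combining Lemma~\ref{Lemma:PisotPowers} (the set of $n\in\mathbb{N}$ with $\alpha^n$ Pisot is closed under positive integer multiples) and Lemma~\ref{lemma-BD1} (it is also closed under $\gcd$), this set must equal $g\mathbb{N}$ for some smallest $g\in\mathbb{N}$; write $\beta:=\alpha^g$ and $\ell:=\deg\beta$.

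Next, to prove $g\mid s_k$ for all sufficiently large $k$, suppose for contradiction that some residue $r_0\in\{1,\ldots,g-1\}$ is attained by $s_k\bmod g$ for infinitely many $k$, and pass to a subsequence $(s_{k_j})$ along which $s_{k_j}\equiv r_0\pmod g$. This subsequence still satisfies the hypothesis of the lemma with the same $K$ and with $\liminf_j t_{k_j}>0$, so reapplying Theorem~\ref{Theorem-Dubickas1} as above produces some $j$ with $\alpha^{s_{k_j}}$ Pisot, whence $g\mid s_{k_j}$, contradicting $r_0\ne 0$. So $g\mid s_k$ for all large $k$.

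For the degree bound, assume $\ell\ge 2$ (when $\ell=1$ the bound $\ell\le 1+(\limsup_k t_k)^{-1}$ is automatic). For large $k$ write $s_k=gm_k$, label the conjugates $\beta=\beta_1,\beta_2,\ldots,\beta_\ell$ so that $|\beta_2|\ge\cdots\ge|\beta_\ell|$, and observe that since $\beta$ is Pisot we have $\|\beta^{m_k}\|=|\beta_2^{m_k}+\cdots+\beta_\ell^{m_k}|$ for large $k$. Lemma~\ref{lemma-Dubickas2} then gives
\[
|\beta_2|^{m_k}m_k^{-\lambda}\le\|\beta^{m_k}\|=\|\alpha^{s_k}\|\le K\alpha^{-s_k t_k}=K\beta^{-m_k t_k}.
\]
Passing to a subsequence along which $t_k\to T:=\limsup_k t_k$ (so $m_k\to\infty$) and extracting $m_k$-th roots, the factor $(Km_k^\lambda)^{1/m_k}\to 1$ yields $|\beta_2|\le\beta^{-T}$. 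On the other hand $|N(\beta)|=\beta\prod_{j=2}^{\ell}|\beta_j|$ is a positive integer, so $\prod_{j=2}^\ell|\beta_j|\ge 1/\beta$, and since $\prod_{j=2}^\ell|\beta_j|\le|\beta_2|^{\ell-1}$ we obtain $|\beta_2|\ge\beta^{-1/(\ell-1)}$. Since $\beta>1$, combining the two inequalities produces $\ell\le 1+1/T$.

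Finally, if $\ell=1+1/T$ (necessarily $\ell\ge 2$ and $T=1/(\ell-1)$), the chain
\[
1/\beta\le\prod_{j=2}^\ell|\beta_j|\le|\beta_2|^{\ell-1}\le\beta^{-T(\ell-1)}=1/\beta
\]
collapses to equality, so $|N(\beta)|=1$ and $\beta=\alpha^g$ is a unit Pisot number.

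I expect the main obstacle to be the second stage: neither Theorem~\ref{Theorem-Dubickas1} nor Lemma~\ref{lemma-BD1} directly yields divisibility $g\mid s_k$ for \emph{every} large $k$, and the resolution is the pigeonhole argument on residues modulo $g$ followed by a second application of Dubickas's theorem to the extracted subsequence. The remaining stages are relatively mechanical once the correct $g$ is in hand, modulo the care needed to pass from a single-$k$ inequality to a $\limsup$ bound by selecting a subsequence along which $t_k$ realizes its $\limsup$.
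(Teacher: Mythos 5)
Your proposal is correct and follows essentially the same route as the paper's proof: apply Theorem~\ref{Theorem-Dubickas1} (with $q=1$ and $\epsilon$ chosen below $(\liminf_k t_k)\log\alpha$) to produce some $m$ with $\alpha^{s_m}$ Pisot, take $g$ minimal, re-apply Theorem~\ref{Theorem-Dubickas1} to the subsequence of indices where $g\nmid s_k$ together with Lemma~\ref{lemma-BD1} to get divisibility, then combine Lemma~\ref{lemma-Dubickas2} with the norm bound $|\beta_1\cdots\beta_\ell|\ge 1$ to deduce $\ell\le 1+1/\limsup t_k$ and the unit property in the extremal case. Your reformulation of the divisibility step as ``the set of exponents $n$ with $\alpha^n$ Pisot equals $g\mathbb{N}$'' (via Lemmas~\ref{Lemma:PisotPowers} and \ref{lemma-BD1}) is a clean packaging of the same content, and extracting $m_k$-th roots along a subsequence realizing $\limsup t_k$ is the same computation the paper carries out by taking logarithms; the one small point to tidy up is that to contradict Theorem~\ref{Theorem-Dubickas1} you should run it with an $\epsilon'$ strictly smaller than $(\liminf_k t_k)\log\alpha$ so that the constant $K$ is absorbed, which is exactly the paper's choice $\epsilon=(\underline{t}/2)\log\alpha$.
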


\begin{proof}
Let $\overline{t}=\limsup_{k\to\infty}\: t_k$ and $\underline{t}=\liminf_{k\to\infty}\: t_k$.  By \eqref{ineq:PisotNumber-1} and Theorem~\ref{Theorem-Dubickas1} with $\epsilon\coloneqq  (\underline{t}/2)\log \alpha$ and $q\coloneqq 1$, there exists a positive integer $m$ such that $\alpha^{s_m}$ is a Pisot number. Let $g$ be the smallest positive integer such that  $\alpha^g$ is a Pisot number. 

Let us show that $g\mid s_k$ for every sufficiently large $k$. If not, there are infinitely many positive integers $n_1<n_2<n_3<\cdots$ such that $g \nmid s_{n_k}$. Then, by \eqref{ineq:PisotNumber-1} and Theorem~\ref{Theorem-Dubickas1} with $s_k\coloneqq s_{n_k}$, $\epsilon\coloneqq  (\underline{t}/2)\log \alpha$ and $q\coloneqq 1$, there exists $r\in \mathbb{N}$ such that 
$\alpha^{s_{n_r}}$ is a Pisot number. By Lemma~\ref{lemma-BD1} and the choice of $g$, we obtain $g\mid s_{n_r}$, a contradiction.

Let $\beta=\alpha^g$, and let $\beta_1, \beta_2,  \ldots ,  \beta_\ell$ be the conjugates over $\mathbb{Q}$ of $\beta$ labeled so that $\beta=\beta_1>1>|\beta_2|\geq  \cdots \geq |\beta_\ell|$. Therefore, \eqref{ineq:PisotNumber-1} and Lemma~\ref{lemma-Dubickas2} with $n\coloneqq s_{k}/g\in \mathbb{N}$ imply that for every sufficiently large $k\in \mathbb{N}$ we have
\begin{align*}
 K \alpha^{-s_{k} t_{k}}&\geq \|\alpha^{s_{k}}\| = \|\beta^{s_{k}/g} \|\\
&=\|\beta_2^{s_{k}/g}+\cdots + \beta_\ell^{s_{k}/g}  \|
\geq |\beta_2|^{s_{k}/g}(s_{k}/g)^{-\lambda}
\end{align*}
for some constant $\lambda=\lambda(\beta)>0$. Thus, by taking the logarithm on both sides and taking $\limsup_{k\to \infty}$, we obtain 
\[
\beta_1^{-\overline{t}/g}= \alpha^{-\overline{t}}\geq |\beta_2|^{1/g},
\]
which yields that $\beta_1^{-\overline{t}}\geq |\beta_2|$.
Since $\beta_1\cdots \beta_\ell\in \mathbb{Z}\setminus\{0\}$, we obtain
\[
1\leq | \beta_1\cdots \beta_\ell|\leq \beta_1^{-(\ell-1)\overline{t}+1 },
\]  
and hence $1\leq \ell \leq 1+1/\overline{t}$. Furthermore, if $\ell=1+1/\overline{t}$ is true, then  $1\leq  | \beta_1\cdots \beta_\ell|\leq \beta_1^0=1$, which means that $\beta$ is a Pisot unit. 
\end{proof}

\begin{lemma}\label{lemma:Case-A}
Let $(c_k)_{k\ge 1}$ be a sequence of real numbers satisfying \textup{(\ref{G1})} to \textup{(\ref{G4})}. We further suppose that for every  $k\in \mathbb{N}$ we have \eqref{ineq:key1-assump1}. If \eqref{CaseI} in Lemma~\ref{lemma:key1} is true, then $\xi(C_k)$ is transcendental.
\end{lemma}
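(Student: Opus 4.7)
Proceed by contradiction, assuming $\xi=\xi(C_k)$ is algebraic. The plan is to use the Case~(I) inequality to exhibit a rapidly decaying rational approximation to $\xi^{C_k}$ along an infinite subsequence, then invoke Lemma~\ref{Lemma:PisotNumber} to force $\xi^g$ to be a Pisot number of degree $\ell\le 2$ for some $g\in\mathbb{N}$ with $g\mid C_{k_j}$ for all large $j$, and finally rule out both $\ell=1$ (via the hypothesis \eqref{ineq:key1-assump1}) and $\ell=2$ (via the exact identity $\|\beta^n\|=|\beta_2|^n$ combined with the norm inequality $\beta|\beta_2|\ge 1$).

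Let $(k_j)_{j\ge 1}$ enumerate, in increasing order, the positive integers $k$ for which the Case~(I) inequality holds; discard finitely many initial terms so that $C_{k_j}\in\mathbb{N}$ by~\eqref{G4} and $\xi^{C_{k_j}}\ge 2$. Writing $p_k=\lfloor\xi^{C_k}\rfloor$, the bounds $c_{k_j+1}\ge 2$ and $p_{k_j}\ge \xi^{C_{k_j}}/2$ yield
\[
\|\xi^{C_{k_j}}\|\le \frac{1}{c_{k_j+1}\,p_{k_j}^{c_{k_j+1}-1}}\le \frac{1}{2p_{k_j}}\le \frac{2}{\xi^{C_{k_j}}}.
\]
Applying Lemma~\ref{Lemma:PisotNumber} with $\alpha=\xi$, $s_j=C_{k_j}$, $K=2$, and $t_j\equiv 1$ then produces $g\in\mathbb{N}$ such that $\beta\coloneqq\xi^g$ is a Pisot number of degree $\ell\le 1+(\limsup t_j)^{-1}=2$, with $g\mid C_{k_j}$ for every sufficiently large $j$.

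If $\ell=1$, then $\beta\in\mathbb{N}_{\ge 2}$, so $\xi^{C_{k_j}}=\beta^{C_{k_j}/g}\in\mathbb{N}$ for large $j$, contradicting \eqref{ineq:key1-assump1} at $m=k_j$. If $\ell=2$, let $\beta_2$ be the Galois conjugate of $\beta$; then $\beta|\beta_2|=|N_{\mathbb{Q}(\beta)/\mathbb{Q}}(\beta)|\in\mathbb{N}$, whence $|\beta_2|^{-1}\le\beta$. For large $j$, $|\beta_2|^{C_{k_j}/g}<1/2$, so $\|\xi^{C_{k_j}}\|=|\beta_2|^{C_{k_j}/g}$; combining with the Case~(I) bound and $p_{k_j}\ge \beta^{C_{k_j}/g}-1$, and setting $B\coloneqq\beta^{C_{k_j}/g}$, $c\coloneqq c_{k_j+1}$, one finds
\[
c(B-1)^{c-1}\le |\beta_2|^{-C_{k_j}/g}\le B.
\]
But for $c\ge 2$ and $B>2$ we have $(B-1)^{c-2}\ge 1$, so $c(B-1)^{c-1}\ge 2(B-1)>B$, contradicting the above display since $B\to\infty$ as $j\to\infty$.

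The main obstacle is the degree-$2$ case: there the Pisot lower bound $\|\beta^n\|=|\beta_2|^n$ meets the Case~(I) upper bound tightly, and one must exploit the elementary norm inequality $\beta|\beta_2|\ge 1$ (which holds because $\beta$ is a Pisot integer of degree~$2$) to squeeze out a contradiction; by contrast, the degree-$1$ case follows immediately from hypothesis~\eqref{ineq:key1-assump1}.
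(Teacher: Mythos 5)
Your proof is correct and follows essentially the same path as the paper's: feed the Case~(I) bound into Lemma~\ref{Lemma:PisotNumber} (hence Theorem~\ref{Theorem-Dubickas1}) to extract $g$ with $\xi^g=\beta$ a Pisot number of degree $\ell\le 2$, then rule out $\ell\in\{1,2\}$. The differences are minor. For $\ell=1$ you invoke hypothesis \eqref{ineq:key1-assump1} directly, while the paper instead observes that $\beta^{C_n/g}=\lfloor\xi^{C_n}\rfloor$ would be a prime-valued perfect power; either works. For $\ell=2$ the paper chooses $t_k=c_{n_k+1}-1$ in Lemma~\ref{Lemma:PisotNumber} so that $\ell=2$ forces the equality case and $\beta$ becomes a Pisot \emph{unit}, giving $\|\beta^n\|=\beta^{-n}$ exactly; you instead take the constant $t_j\equiv 1$ and replace this by the elementary inequality $|\beta_2|^{-1}\le\beta$ coming from $|\beta\beta_2|\ge 1$, which is all the degree-$2$ contradiction actually needs. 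Both routes squeeze the same gap between the Case~(I) upper bound and the Pisot lower bound; yours is marginally more self-contained since it sidesteps the Pisot-unit observation.
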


\begin{proof}
Let $\xi=\xi(C_k)$, and we assume that $\xi$ is algebraic. We furthermore assume that \eqref{CaseI} is true, and so
\begin{equation}\label{eq:CaseA-1}
J\coloneqq \left\{n\in \mathbb{N} \colon \|\xi^{C_n}\|\leq  \frac{1}{c_{n+1}\lfloor \xi^{C_n}\rfloor^{c_{n+1}-1}} \right\}
\end{equation}
is infinite. Let $J=\{n_1,n_2,\ldots\}$, where $n_1<n_2<\cdots$. By Lemma~\ref{Lemma:PisotNumber} with $\alpha\coloneqq \xi$, $s_k\coloneqq C_{n_k}$, and $t_k\coloneqq c_{n_k+1}-1$, there exists $g\in \mathbb{N}$ such that $\xi^g=\beta$ is a Pisot number of degree $\ell$ with 
\begin{equation}\label{eq:CaseA-2}
1\leq \ell \leq 1+\biggl(\limsup_{\substack{k\to \infty} } c_{n_k+1}-1 \biggr)^{-1}
\end{equation}
and $g\:|\: C_{n}$ for every sufficiently large $n\in J$. By \eqref{G2} and \eqref{eq:CaseA-2}, we obtain $\ell\in\{1,2\}$. Suppose that $\ell=1$, which means that $\beta$ is an integer. In this case, for sufficiently large $n\in J$, we see that $\beta^{C_{n} /g} $ is a composite number, but
\[
\beta^{C_{n} /g} =  \xi^{C_{n}}= \lfloor \xi^{C_{n}}\rfloor 
\]
is a prime number, a contradiction. 

Suppose that $\ell=2$. Then, by \eqref{G2} and \eqref{eq:CaseA-2}, we have
\[
 \ell = 1+\biggl(\limsup_{\substack{k\to \infty} } c_{n_k+1}-1 \biggr)^{-1}.
\]
By Lemma~\ref{Lemma:PisotNumber} and $\ell=2$, $\beta$ is a quadratic Pisot unit, and hence $\|\beta^n\|=\beta^{-n}$ for all $n\in \mathbb{N}$. Therefore, by \eqref{G2},   for every sufficiently large $n\in J$, we obtain 
\[
\xi^{-C_{n}}=\beta^{-C_{n}/g} =\|\beta^{C_{n}/g} \| =  \|\xi^{C_n}\| \leq \frac{1}{c_{n+1} \lfloor \xi^{C_{n}}  \rfloor^{c_{n+1}-1} }\leq \frac{1}{2 \lfloor \xi^{C_n} \rfloor},
\]
a contradiction.
\end{proof}

\begin{proof}[Proof of Proposition~\ref{Proposition:General}]Let $\theta$ be a real number in $[1/2,1)$ satisfying \eqref{dag}. Let $(c_k)_{k\ge 1}$ be a sequence of real numbers satisfying \eqref{G1} to \eqref{G4}. By \eqref{G1} and \eqref{G2}, Theorem~\ref{Theorem:Matomaki} yields that $\mathcal{W}(C_k)$ is non-empty and $\xi=\xi(C_k)$ exists.

We suppose that for every  $m\in \mathbb{N}$, we have \eqref{ineq:key1-assump1}, and we further suppose that $\xi$ is algebraic. Let $\epsilon$ be a real number with \eqref{Condition:epsilon}, and let $I=\{k\in \mathbb{N} \colon c_{k+1}\geq 1/(1-\theta)+\epsilon \}$. \\[-5pt] 

\noindent\textbf{Proof of \eqref{Result:General:1}:}   By Lemma~\ref{lemma:key1}, we obtain \eqref{CaseI} or \eqref{CaseII}. Furthermore, Lemma~\ref{lemma:Case-A} implies that \eqref{CaseI} is false since $\xi$ is algebraic. Therefore, by Lemma~\ref{lemma:xi-fractional}, we have \eqref{ineq:xi-result1} for sufficiently large $k\in I$, and hence we obtain \eqref{Result:General:1}.  \\[-5pt]

\noindent\textbf{Proof of \eqref{Result:General:2}:} Let $\overline{c}=\limsup_{k\to\infty}c_{k+1}$. Let $I=\{n_1,n_2,\ldots\}$, where $0<n_1<n_2<\cdots$. By the definition of $I$, we have
\[
\limsup_{k\to \infty} c_{n_k+1}=\overline{c}\geq 1/(1-\theta)+\epsilon.
\] 
By $\|x \|\leq \{x\}$ and \eqref{Result:General:1}, for sufficiently large $k\in \mathbb{N}$, we have 
\begin{equation}
\| \xi^{C_{n_k}} \|\leq \{\xi^{C_{n_k}}\} \leq \frac{2}{c_{{n_k}+1}\lfloor \xi^{C_{n_k}}\rfloor^{(1-\theta)c_{{n_k}+1}-1}  }, 
\end{equation}
and hence by Lemma~\ref{Lemma:PisotNumber} with $\alpha\coloneqq \xi$, $s_k\coloneqq C_{n_k}$, and $t_k\coloneqq (1-\theta)c_{n_k+1}-1$, there exists $g\in \mathbb{N}$ such that $\xi^g$ is a Pisot number of degree $\ell$ with
\begin{equation}\label{inequality:ProofGen:1}
1\leq \ell\leq  1+\biggl((1-\theta)\overline{c}-1\biggl)^{-1},
 \end{equation}
and for sufficiently large $k\in \mathbb{N}$, we have $g\mid C_{n_k}$. 

Let $\beta=\xi^g$. If $\ell=1$, then $\beta$ is an integer. Thus, for sufficiently large $k\in I$ we have $\xi^{C_{k}} = \beta^{C_{k}/g}$ is an integer, a contradiction to \eqref{ineq:key1-assump1}. Thus, $\ell\neq 1$. Therefore, we obtain \eqref{Result:General:2}. \\[-5pt] 

\noindent\textbf{Proof of \eqref{Result:General:4}:} By \eqref{Result:General:1} and \eqref{Result:General:2}, for sufficiently large $k\in I$, we obtain $g \mid C_k$ and  
\[
\{\beta^{C_{k}/g} \}=\{\xi^{C_{k}} \}<1/2,
\]
and hence Lemma~\ref{Lemma:Trace} implies \eqref{Result:General:4}. \\[-5pt]

\noindent\textbf{Proof of \eqref{Result:General:3}:}  Suppose that $\ell=2$. Since $\beta_1 \beta_2\in \mathbb{Z}\setminus \{0\}$ and $\beta_1\in \mathbb{R}$, we have $\beta_2\in \mathbb{R} \setminus\{0\}$. By \eqref{Result:General:2} and \eqref{Result:General:4}, for every sufficiently large $k\in I$, we  obtain
\[
 \beta_2^{C_{k}/g}= \lfloor \beta_1^{C_{k}/g} \rfloor-\beta_1^{C_{k}/g} =-\{\beta_1^{C_{k}/g} \} <0,
\]
which yields that $\beta_2<0$, and $C_{k}/g$ is odd. Choose a sufficiently large $k\in \mathbb{N}$. Let $r$ and $s$ be positive odd numbers with $C_{k}/g=rs$.  Then, we have 
\begin{align*}
\lfloor\xi^{C_{k}}\rfloor&=\lfloor\beta_1^{C_{k}/g}\rfloor = \beta_1^{C_{k}/g}+\beta_2^{C_{k}/g}=\beta_1^{rs}+\beta_2^{rs}\\
&= (\beta_1^r+\beta_2^r)(\beta_1^{r(s-1)} -\beta_1^{r(s-2)}\beta_2^r+ \cdots -  \beta_1^r\beta_2^{r(s-2)}+\beta_2^{r(s-1)}).  
\end{align*}
By choosing $r=1$ and $s=C_k/g$, we have $\beta_1+\beta_2=1$ since $\lfloor\xi^{C_{k}}\rfloor$ is a prime number and
\[
\beta_1^{s-1} -\beta_1^{s-2}\beta_2+ \cdots -  \beta_1\beta_2^{s-2}+\beta_2^{s-1}
\]
is an integer greater than or equal to $\beta_1^{C_k/g-1}(\ge 2)$. 
Therefore, Lemma~\ref{Lemma-GoldenRatio} implies that $\beta_1=(1+\sqrt{5})/2$. In addition, if $C_{k}/g$ is a composite number, then we may choose $r$ and $s$ as odd numbers $\ge 3$. By $\beta_2=-0.618\cdots$, we observe that 
\begin{gather*}
\beta_1^r+\beta_2^r\ge \beta_1^3+\beta_2^3=4,\\
\beta_1^{r(s-1)} -\beta_1^{r(s-2)}\beta_2^r+ \cdots -  \beta_1^r\beta_2^{r(s-2)}+\beta_2^{r(s-1)}\ge \beta_1^{r(s-1)}\ge \beta_1^3\ge 4,
\end{gather*}
and hence $\lfloor\xi^{C_{k}}\rfloor$ is a composite number, a contradiction. Therefore, we conclude \eqref{Result:General:3}. 
\end{proof}

\section{Type A: Proof of Theorem~\ref{Theorem:Main1}}\label{Section:TypeA}

Suppose that a sequence $(c_k)_{k\ge 1}$ satisfies \eqref{A1} to \eqref{A4} in Theorem~\ref{Theorem:Main1}. Theorem~\ref{Theorem:BHP} yields that \eqref{dag} is true for $\theta=21/40$. Therefore, $(c_k)_{k\ge 1}$ satisfies \eqref{G1} to \eqref{G4} in Proposition~\ref{Proposition:General}, and so the constant $\xi=\xi(C_k)$ exists. 

If there exists $m\in \mathbb{N}$ such that $\xi^{C_m}\in \mathbb{N}$, then we have \eqref{equation:TypeA:1} for $\ell=1$ since a positive integer is a Pisot number of degree $1$. Thus, $\xi$ is an algebraic integer. In particular, if $\xi$ is rational, then $\xi$ must be a rational integer. This is a contradiction to   
\[
\xi^{C_k}=\lfloor \xi^{C_k}\rfloor \in \mathcal{P}.
\]
for every $k\in \mathbb{N}$. Therefore, $\xi$ is irrational.

We may assume that $\xi^{C_m} \notin \mathbb{N}$ for all $m\in \mathbb{N}$. Suppose that $\xi$ is algebraic. Then, by applying \eqref{Result:General:2} in Proposition~\ref{Proposition:General}, there exists $g\in \mathbb{N}$ such that $\beta=\xi^g$ is a Pisot number of degree $\ell\ge 2$ with \eqref{equation:TypeA:1}, and for every sufficiently large $k\in I$, we have $g\mid C_k$. Therefore, by applying  Lemma~\ref{Lemma:PisotPowers}, there exists $m\in \mathbb{N}$ such that $g\mid C_m$ and $\xi^{C_m}=\xi^{g\cdot C_m/g }$ is a Pisot number of degree $\ell$. In particular, $\xi$ is irrational since the degree of $\xi^{C_m}$ is greater than or equal to $2$.

\section{Type B: Proof of Theorem~\ref{Theorem:Main2} and Corollary~\ref{Corollary:Recurrence}}\label{Section:TypeB}

For all integers $a$ and $q\neq 0$, there uniquly exists a pair $(r,s)$ of integers such that 
\[
a=rq+s,\quad 0\leq s<|q|.
\] 
Then we define $a \mod q=s$.

\begin{lemma}[{see \cite[Lemma~2]{Dubickas2002} or \cite[Lemma~2.1]{Saito25ii}}]\label{Lemma:Periodic}Let $(b_k)_{k\ge 1}$ be a sequence of integers satisfying that there exist integers $a_{d-1},\ldots, a_0$ such that for every $k\in \mathbb{N}$ we have
\begin{equation}\label{eq:recurrence}
b_{k+d}=a_{d-1} b_{k+d-1}+a_{d-2}b_{k+d-2}+\cdots+a_0 b_k  
\end{equation}
Let $q$ be an integer. If $a_0$ and $q$ are coprime, then $(b_k \mod  q)_{k\ge 1}$ is purely periodic \textit{i.e.} there exists $L\in \mathbb{N}$ such that for all $k\in \mathbb{N}$ we have $b_k\equiv b_{k+L} \mod q$.
 \end{lemma}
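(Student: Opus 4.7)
The plan is to exhibit a positive integer $L$ such that $b_k\equiv b_{k+L}\pmod q$ for every $k\in\mathbb{N}$, by combining a pigeonhole argument with the fact that the linear recurrence is invertible modulo $q$ when $\gcd(a_0,q)=1$. We may assume $q\ne 0$ (otherwise the statement is vacuous). Encode the state of the sequence at time $k$ by the $d$-tuple
\[
v_k=(b_k\bmod q,\ b_{k+1}\bmod q,\ \ldots,\ b_{k+d-1}\bmod q)\in(\mathbb{Z}/q\mathbb{Z})^d.
\]
The recurrence \eqref{eq:recurrence}, read modulo $q$, shows that $v_{k+1}$ is a function of $v_k$, so iteration of a single map $T\colon(\mathbb{Z}/q\mathbb{Z})^d\to(\mathbb{Z}/q\mathbb{Z})^d$ governs the forward dynamics.

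Since the state space has only $|q|^d$ elements, the pigeonhole principle produces indices $1\le j<j'$ with $v_j=v_{j'}$; set $L=j'-j\ge 1$. By determinism of $T$, applying $T$ repeatedly yields $v_k=v_{k+L}$ for every $k\ge j$, which already gives $b_k\equiv b_{k+L}\pmod q$ for all $k\ge j$.

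To promote this to pure periodicity (from $k=1$), I would exploit the coprimality hypothesis. Rewrite the recurrence as
\[
a_0\,b_k\;=\;b_{k+d}-a_{d-1}b_{k+d-1}-\cdots-a_1b_{k+1}.
\]
Because $\gcd(a_0,q)=1$, the residue $a_0$ is invertible in $\mathbb{Z}/q\mathbb{Z}$, and therefore $b_k\bmod q$ is uniquely determined by $v_{k+1}$. This means the map $T$ is not merely well-defined but bijective on $(\mathbb{Z}/q\mathbb{Z})^d$. Starting from $v_j=v_{j+L}$ and applying $T^{-1}$ step by step, I obtain $v_{j-1}=v_{j-1+L}$, $v_{j-2}=v_{j-2+L}$, and after $j-1$ steps $v_1=v_{1+L}$. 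In particular $b_k\equiv b_{k+L}\pmod q$ for every $k\ge 1$, proving pure periodicity.

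The only step requiring the hypothesis is the backward extension, and it is immediate once $a_0$ is seen to be a unit mod $q$; the rest is finiteness of the state space plus a deterministic dynamical system. I do not foresee any genuine obstacle, and the argument is essentially the standard one used for linear recurrences modulo an integer.
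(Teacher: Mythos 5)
Your proof is correct and takes essentially the same approach as the paper's: pigeonhole on the finite set of $d$-tuples of residues to find a repetition, then use coprimality of $a_0$ with $q$ to invert the recurrence and propagate the coincidence backward to $k=1$. The paper writes the backward step with an explicit inverse $u$ of $a_0$ modulo $q$, whereas you phrase it as bijectivity of the state map $T$; these are the same argument in different notation.
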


\begin{proof}
If $q=1$, then it is trivial. Thus, we assume $q\neq 1$.  Since 
\[
( b_{k+d-1} \mod q, b_{k+d-2} \mod q, \ldots, b_k \mod q )\in \{0,1,\ldots, q-1\}^{d+1}
\]
for all $k=1,2,\ldots$, there are integers $r$ and $s$ with $r>s$ such that 
\begin{align*}
&( b_{r+d-1} \mod q, b_{r+d-2} \mod q, \ldots, b_r \mod q )\\
&=( b_{s+d-1} \mod q, b_{s+d-2} \mod q, \ldots, b_s \mod q ).
\end{align*}
Let $L=r-s$. Then, by \eqref{eq:recurrence} with $k=r$ and $k=s$, we obtain $b_{r+d} \equiv b_{s+d} \mod q$. Again applying \eqref{eq:recurrence} with $k=r+1$ and $k=s+1$, we have $b_{r+d+1} \equiv b_{s+d+1} \mod q$. By iterating this argument, we have $b_{k}\equiv b_{k+L} \mod q$ for all $k\geq s$. In addition, since $a_0$ and $q$ are coprime, there exists $u\in \{1,2,\ldots,q-1\}$ such that $a_0 u \equiv 1 \mod q$.  By \eqref{eq:recurrence}, we obtain 
\begin{equation}\label{eq:recurrence-modq}
b_k \equiv u b_{k+d}-ua_{d-1} b_{k+d-1}-ua_{d-2}b_{k+d-2}-\cdots-ua_1 b_{k+1}\quad \mod q
\end{equation}
for all $k\in \mathbb{N}$. Since $b_{k}\equiv b_{k+L} \mod q$ for all $s\leq k\leq s+d$,  \eqref{eq:recurrence-modq} with $k=s-1$ implies that $b_{s-1}=b_{s+L-1}$. By iterating this argument, we conclude that $b_k=b_{k+L}$ for all $k\in \mathbb{N}$. 
\end{proof}

\begin{proof}[Proof of Theorem~\ref{Theorem:Main2}]

Suppose that a sequence $(c_k)_{k\ge 1}$ satisfies \eqref{B1} to \eqref{B5} in Theorem~\ref{Theorem:Main2}. By choosing  $\theta=21/40$, $(c_k)_{k\ge 1}$ satisfies \eqref{G1} to \eqref{G4} in Proposition~\ref{Proposition:General} and the constant $\xi=\xi(C_k)$ exists.

Suppose that $\xi^{C_m}\in \mathbb{N}$ for some $m\in \mathbb{N}$. Then by \eqref{B5}, there exists $k\in I$ with $k>m$ such that $C_m\mid C_k$, and hence
\[
 \xi^{C_k}= (\xi^{C_m})^{C_k/C_m}
\] 
is a composite number, but $\xi^{C_k}=\lfloor \xi^{C_k}\rfloor$ is a prime number, a contradiction. Therefore, we have $\xi^{C_m}\notin \mathbb{N}$ for every $m\in \mathbb{N}$. 

Assume that $\xi$ is algebraic. Then,  we obtain \eqref{Result:General:1}, \eqref{Result:General:2}, \eqref{Result:General:3}, and \eqref{Result:General:4} in Proposition~\ref{Proposition:General}. Especially, by \eqref{Result:General:2}, there exists $g\in \mathbb{N}$ such that $\xi^g=\beta$ is a Pisot number of degree $\ell$ with 
\begin{equation}
2 \leq \ell\leq 1 + \biggl(\frac{19}{40}\limsup_{k\to \infty} c_{k+1}-1\biggr)^{-1}, 
\end{equation}
and for sufficiently large $k\in I$, we have $g\mid C_k$. 

Suppose that $\ell=2$. By \eqref{Result:General:3}, we have $\beta^g=(1+\sqrt{5})/2$ and $C_k/g$ is an odd prime number for sufficiently large $k$. Then, we take a sufficiently large $m\in I$ with $g\mid C_m$. By \eqref{B5}, there exists $k\in I$ with $k>m$ such that $C_m\mid C_k$. Therefore, $C_{m}/g$ is a divisor of $C_{k}/g$, a contradiction.

In addition, we now suppose \eqref{B6} and let us show that $\xi$ is transcendental. Let $\beta_1, \beta_2,  \ldots ,  \beta_\ell$ be all the conjugates over $\mathbb{Q}$ of $\beta$ labeled so that 
\[
\xi^g=\beta=\beta_1>1>|\beta_2|\geq  \cdots \geq |\beta_\ell|. 
\]
Take a sufficiently large integer $m\in I$ such that $g\mid C_m$. Let $\alpha_1=\beta_1^{C_{m}/g},\ldots,\alpha_\ell=\beta_\ell^{C_{m}/g}$. By Lemma~\ref{Lemma:PisotPowers}, $\alpha_1$ is a Pisot number of degree $\ell$. For every $n\in \mathbb{N}$, we define
\[
S(n)= \Tr(\alpha^n)=\alpha_1^{n} + \cdots + \alpha_\ell^{n}.
\] 
Since $\alpha_1$ is an algebraic integer of degree $\ell$, there exist integers $e_0, e_1,\ldots, e_\ell$ such that for all $j\in \{1,2,\ldots, \ell\}$, we have
\[
\alpha_j^\ell = e_{\ell-1}\alpha_j^{\ell-1}  +e_{\ell-2}\alpha_j^{\ell-2} \cdots+e_0,  
\]
where $e_0=\pm\alpha_1\cdots\alpha_\ell$.  Thus, for all non-negative integers $n$ and for all $j\in \{1,2,\ldots,\ell \}$
\[
\alpha_j^{n+\ell} = e_{\ell-1}\alpha_j^{n+\ell-1}  +e_{\ell-2}\alpha_j^{n+\ell-2} \cdots+e_0 \alpha_j^n.  
\]
By taking summation over $j\in \{1,2,\ldots, \ell\}$ on the both sides, we obtain 
\begin{equation}
S(n+\ell)=     e_{\ell-1}S(n+\ell-1) +e_{\ell-2}S(n+\ell-2)+ \cdots+e_0 S(n).
\end{equation}
Since $\beta$ is a Pisot number and $m$ is sufficiently large, we have 
\[
S(1)=\beta_1^{C_m/g} + \cdots +\beta_\ell^{C_m/g} > | \beta_1^{C_m/g}  \cdots  \beta_\ell^{C_m/g}|=|\alpha_1\cdots \alpha_\ell|=|e_0|.  
\]
We note that $S(1)$ is a prime number since $S(1)=\Tr(\beta^{C_m/g})=\lfloor \xi^{C_m}\rfloor$ by \eqref{Result:General:4}, and hence $S(1)$ and $e_0$ are coprime. Therefore, Lemma~\ref{Lemma:Periodic} with $(b_k)_{k\ge 1}\coloneqq (S(n))_{n\ge 1}$ and $q\coloneqq S(1)$ implies that $(S(n) \mod S(1))_{n\ge 1}$ has period $L$ for some $L\in \mathbb{N}$.  By \eqref{B6}, there exists $k\in I$ with $k>m$ such that $C_k\equiv C_m \mod LC_m$. Therefore, there exists $n\in \mathbb{N}$ such that
\[
C_k=LC_m n+C_m=(Ln+1)C_m,
\]  
and hence 
\[
\lfloor \xi^{C_k}\rfloor=\Tr(\beta^{C_k/g} )=\Tr(\alpha^{C_k/C_m} )  =S(C_k/C_m) =S(Ln+1)\equiv S(1) \equiv 0 \mod S(1). 
\]
This is a contradiction since $\lfloor \xi^{C_k}\rfloor>\lfloor \xi^{C_m}\rfloor=S(1)$ and $\lfloor \xi^{C_k}\rfloor$ is a prime number.
\end{proof}

\begin{proof}[Proof of Corollary~\ref{Corollary:Recurrence}]

Suppose that $(R_k)_{k\ge 1}$ is a sequence of positive integers satisfying \eqref{B1'} to \eqref{B4'}. Let $c_1=R_1$ and $c_{k+1}=R_{k+1}/R_k$ for all $k\in \mathbb{N}$. Then, it is clear that $(c_k)_{k\ge 1}$ satisfies \eqref{B1} to \eqref{B4} in Theorem~\ref{Theorem:Main2}. We note that \eqref{B5} follows from \eqref{B6}. Therefore, we shall show  \eqref{B6}. 

Let $L$ and $m$ be arbitrary positive integers.  By \eqref{B4'}, we have
\[
R_{k+d}=a_{d-1}R_{k+d-1}+a_{d-2} R_{k+d-2}+ \cdots + a_1 R_{k+1}+a_0R_{k} 
\]
for all $k\in \mathbb{N}$, where $a_0=\pm 1$. Therefore, since $a_0$ and $LR_m$ are coprime, Lemma~\ref{Lemma:Periodic} with $(b_k)_{k\ge 1}\coloneqq (R_k)_{k\ge 1}$ and $q\coloneqq LR_m$ yields that $(R_k \mod LR_m)_{k\ge 1}$ is purely periodic with period $U$ for some $U\in \mathbb{N}$, and hence choosing $k=m+U$, 
\[
R_{k}=R_{m+U} \equiv  R_m \mod LR_m.
\]
Thus, $\eqref{B6}$ is true. Therefore, Theorem~\ref{Theorem:Main2} implies that $\xi(R_k)$ exists and it is transcendental. 
\end{proof}

\section{Type C: Proof of Theorem~\ref{Theorem:Main3}}\label{Section:TypeC}

\begin{lemma}\label{Lemma:Const}
Let $c$ be a real number greater than or equal to $2$. Suppose that there exists $x_0\geq 1$ such that for all real numbers $x\geq x_0$ we have \eqref{P1}. Let $(c_k)_{k\ge 1}$ be a sequence of real numbers satisfing
\begin{itemize}
\item $c_1>0$\textup{;}
\item $c_{k+1}\geq c$ for all $k\in \mathbb{N}$.
\end{itemize}
Then, for all prime numbers $p> x_0^{1/c_2}$, there exists $A\in \mathcal{W}(C_k)$ such that $p=\lfloor A^{C_1}\rfloor$.
\end{lemma}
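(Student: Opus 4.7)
The strategy is to mimic the existence construction in the proof of Theorem~\ref{Theorem:Matomaki}: build by induction a sequence of primes $p_1 = p, p_2, p_3, \ldots$ satisfying
\[
p_k^{c_{k+1}} \leq p_{k+1} < (p_k+1)^{c_{k+1}} - 1 \qquad (k \in \mathbb{N}),
\]
and then define $A = \lim_{k \to \infty} p_k^{1/C_k}$. The telescoping chain analogous to \eqref{ineq:Matomaki-5} shows that $A$ is well-defined and that $\lfloor A^{C_k}\rfloor = p_k$ for every $k$; in particular $A \in \mathcal{W}(C_k)$ and $\lfloor A^{C_1}\rfloor = p_1 = p$.

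For the inductive step, given a prime $p_k$, set $x = p_k^{c_{k+1}}$ and invoke the hypothesis \eqref{P1} to extract a prime in $[x, x + cx^{1-1/c}]$. It then suffices to verify that $x \geq x_0$ and that $x + cx^{1-1/c} \leq (p_k+1)^{c_{k+1}} - 1$. The lower bound $x \geq x_0$ holds for $k=1$ by the assumption $p > x_0^{1/c_2}$, and for $k \geq 2$ it follows by iteration from $p_k \geq p^{c_2 \cdots c_k}$ together with $c_j \geq c \geq 2$ and $x_0 \geq 1$.

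The decisive step is the upper bound, equivalent to
\[
c\, p_k^{c_{k+1}(1-1/c)} + 1 \leq (p_k+1)^{c_{k+1}} - p_k^{c_{k+1}}.
\]
Since $c_{k+1} \geq c \geq 2$, the second-order Bernoulli inequality $(1+u)^{c_{k+1}} \geq 1 + c_{k+1} u + \tfrac{c_{k+1}(c_{k+1}-1)}{2} u^2$ applied with $u = 1/p_k$ gives
\[
(p_k+1)^{c_{k+1}} - p_k^{c_{k+1}} \geq c_{k+1} p_k^{c_{k+1}-1} + \tfrac{c_{k+1}(c_{k+1}-1)}{2} p_k^{c_{k+1}-2}.
\]
The first-order term dominates $c\, p_k^{c_{k+1}(1-1/c)}$, since $c_{k+1} \geq c$ and $c_{k+1}-1 \geq c_{k+1}(1-1/c)$; the quadratic correction absorbs the $+1$ because $\tfrac{c_{k+1}(c_{k+1}-1)}{2} p_k^{c_{k+1}-2} \geq 1$ whenever $c_{k+1} \geq 2$ and $p_k \geq 2$.

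The main obstacle is exactly this algebraic comparison in the boundary regime $c_{k+1} = c = 2$, where the first-order Bernoulli estimate is tight: the quadratic term is genuinely needed, and a small separate observation (for instance, that the extreme endpoint equals $p_k(p_k+2)$ and is therefore composite) is required to upgrade $\leq$ to the strict inequality that ensures the nested intervals $[p_k^{1/C_k}, (p_k+1)^{1/C_k}]$ shrink to a single real number $A$ with the desired properties.
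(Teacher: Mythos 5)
Your construction mirrors the paper's own proof exactly: build the prime chain via \eqref{P1} and then pass to $A = \lim_{k\to\infty} p_k^{1/C_k}$ as in the proof of Theorem~\ref{Theorem:Matomaki}. Where the paper expands $x^{c_{k+1}}$ by Taylor's theorem, you invoke the second-order Bernoulli inequality; these are the same estimate in different clothing, and the dominance argument $c_{k+1}-1\ge c_{k+1}(1-1/c)$ is identical. Your attention to the boundary $c_{k+1}=c=2$ is in fact more scrupulous than the paper's: the display \eqref{Ineq:Const:2} quietly drops the factor $\tfrac12$ in the Taylor remainder $f''(\eta)/2$, and once restored the quadratic term equals exactly $1$ at $c_{k+1}=2$ (and $\eta^{c_{k+1}-2}=1$), so the strict inequality $p_{k+1}<(p_k+1)^2-1$ does not follow from the size estimate alone. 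Your compositeness observation --- that the critical endpoint $p_k(p_k+2)$ cannot be prime --- is the correct patch for this edge case; the paper sidesteps the issue only because Theorem~\ref{Theorem:Main3} always applies the lemma with $c\ge 3$, where the quadratic remainder is strictly larger than $1$ and no such care is needed.
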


\begin{proof}

Let $p_1$ be an arbitrary prime number with $p_1>   x_0^{1/c_2}$. Then, by applying \eqref{P1} with $x=p_1^{c_2}\geq x_0$, there exists $p_2\in \mathcal{P}$ such that    
\begin{equation}\label{Ineq:Const:1}
p_1^{c_2}\leq p_2 <   p_1^{c_2}+cp_1^{c_2(1-1/c)}\leq p_1^{c_2}+c_2p_1^{c_2-1}.
\end{equation}
Let $f(x)=x^{c_2}$. Then by the Taylor expansion of $f(x)$ around $x=p_1$, there exists $\eta\in (p_1,p_1+1)$ such that $f(p_1+1)=f(p_1)+f'(p_1) + f''(\eta)/2$, and hence 
\begin{equation}\label{Ineq:Const:2}
(p_1+1)^{c_2}  = p_1^{c_2}+c_2 p_1^{c_2-1} + c_2(c_2-1) \eta^{c_2-2} > p_1^{c_2}+c_2 p_1^{c_2-1}+1.
\end{equation}
Combining \eqref{Ineq:Const:1} and \eqref{Ineq:Const:2}, we have $p_1^{c_2}\leq p_2 < (p_1+1)^{c_2}-1$. Let $m$ be an integer greater than or equal to $2$. By iterating this argument, we construct a sequence of prime numbers $(p_k)_{k\ge 1}$ such that
\begin{equation}\label{Ineq:Const:3}
p_{k}^{c_{k+1}}\leq p_{k+1} < (p_{k}+1)^{c_{k+1}}-1.
\end{equation}
Similarly with the proof of Theorem~\ref{Theorem:Matomaki}, the limit $\lim_{k\to \infty} p_k^{1/C_k}$ exists, say $A$. In addition, we have $A\in \mathcal{W}(C_k)$ and $\lfloor A^{C_1} \rfloor =p_1$. 
\end{proof}

\begin{proof}[Proof of Theorem~\ref{Theorem:Main3}]
Suppose that a sequence $(c_k)_{k\ge 1}$ satisfies \eqref{C1} to \eqref{C5}. Then,  $(c_k)_{k\ge 1}$ satisfies \eqref{B1} to \eqref{B5}. Thus, by Theorem~\ref{Theorem:Main2}, the number $\xi=\xi(C_k)$ exists, and either $\xi$ is transcendental or there exists $g$ such that $\xi^g=\beta$ is a Pisot number of degree $\ell$ with 
\begin{equation}\label{equation:typeC:0}
3\leq \ell \leq 1+ \left(\frac{19}{40}\limsup_{k\to \infty} c_{k+1} -1 \right)^{-1}\leq 1+ \left(\frac{57}{40} -1 \right)^{-1}=1+\frac{40}{17}<4
\end{equation}
and for every sufficiently large $k\in I$ we have $g\mid C_k$. Therefore, $\ell=3$. We note that  \eqref{C2} leads to
$I=\{k\in \mathbb{N}\colon c_{k+1}\geq 40/19+\epsilon \}=\mathbb{N}$, and hence $g\mid \mathrm{agcd}(C_k)$ since  $g\mid C_k $ for every sufficiently large $k\in \mathbb{N}$. 

By Bertrand's postulate (or Bertrand-Chebyshev theorem), we find a prime number $p_1\in [x_0^{1/c_2},2x_0^{1/c_2}]$. Therefore, by applying Lemma~\ref{Lemma:Const}, there exists $A\in \mathcal{W}(C_k)$ such that $\lfloor A^{C_1}\rfloor = p_1$. Since $\xi$ is the smallest real number of $\mathcal{W}(C_k)$, we obtain 
\[
\xi \leq A \leq (p_1+1)^{1/C_1} \leq (2x_0^{1/c_2}+1 )^{1/c_1}.
\]
Therefore, $\xi^g=\beta$ is a cubic Pisot number less than or equal to 
\[
(2x_0^{1/c_2}+1)^{g/c_1}.\qedhere
\]
\end{proof}

\section{Type D: Proof of Theorems~\ref{Theorem:Main5} and \ref{Theorem:Main4}}\label{Section:TypeD}

\begin{lemma}\label{Lemma:finiteT-expansion}
Let $(b_j)_{1\leq j\leq m}$ be a sequence of positive real numbers satisfying $b_j>1$ for all $j\in [1,m]$. Let $B_j=b_1\cdots b_j$ for every $j\in [1,m]$. Then, for every $x\in [0,1)$, there are a sequence $(a_j)_{1\leq j\leq m}$ of integers and a real number $x'\in[0,1)$ such that $0\leq a_j < b_j$ for all $j\in [1,m]$ and 
\begin{equation}\label{eq:C-expansion}
x=\sum_{j=1}^m  a_j B_j^{-1}+ x' B_m^{-1}.
\end{equation}
\end{lemma}

\begin{proof}
We define $x_1=x$, $a_1=\lfloor b_1 x_1\rfloor$, and $x_2=b_1x_1-a_1=b_1 x_1 -\lfloor b_1 x_1 \rfloor$. Then we see that $x= a_1B_1^{-1} + x_2 b_1^{-1}$. Recursively, we define $(a_j)_{1\leq j\leq m}$ and $(x_j)_{1\leq j\leq m+1}$ by
\[
a_j=\lfloor b_j x_j \rfloor\in[0,b_j),\quad  x_{j+1}=b_jx_j-a_j\in[0,1)
\]
for all $j\in [1,m]$. This implies that $x_j= a_jb_j^{-1} +x_{j+1}b_j^{-1}$, and hence 
\[
x=a_1B_1^{-1} + x_2 b_1^{-1}=a_1B_1^{-1} +a_2B_2^{-1} +x_3B_2^{-1}=\cdots=\sum_{j=1}^m a_j B_j^{-1} + x_{m+1} B_m^{-1}
\]
By setting $x'=x_{m+1}$, we obtain the lemma. 
\end{proof}

\begin{lemma}\label{Lemma:digit-restriction}
Let $\psi$ be a real number in $[0,1)$. Let $b_1$ be an arbitrary positive integer greater than $1$, and let $m$ be an integer greater than $2$. Let $(a_i)_{1\leq i\leq m}$ be as in Lemma~\ref{Lemma:finiteT-expansion} with $(b_j)_{1\leq j \leq m}\coloneqq (b_1,3,\ldots, 3)=b_1\ 3^{m-1}$ and $x\coloneqq \psi$. Suppose that
\begin{equation}\label{Relation:Range-psi}
\{B_j \psi\} \in [1/4, 3/4]
\end{equation}
for every $j\in[1, m-1]$. Then $(a_{j+1},a_{j+2})\in \{02,10,11,20\}$ for every $j\in [1,m-2]$.
\end{lemma}

\begin{proof} By Lemma~\ref{Lemma:finiteT-expansion}, for every $j\in [1,m-2]$, we have 
\[
(a_{j+1},a_{j+2})\in \{00,01,02,10,11, 20, 21, 22\}
\]
and
\begin{equation}\label{eq:digit-exp}
\{B_{j}\psi\}= \frac{a_{j+1}}{3}+\frac{a_{j+2}}{9} + \frac{\psi'}{9}.
\end{equation}
for some $\psi'=\psi'(j)\in [0,1)$.\\

\noindent\underline{Case $00$ or $01$}: If $(a_{j+1},a_{j+2})\in \{00,01\}$ for some $j\in [1,m-2]$, then \eqref{eq:digit-exp} implies that  
\[
\{B_{j}\psi\}\leq \frac{2}{9}<\frac{1}{4},
\]
a contradiction to \eqref{Relation:Range-psi}.

\noindent\underline{Case $21$ or $22$}: If $(a_{k+i},a_{k+i+1})\in \{21,22\}$ for some $i\in [1,m-2]$, then  \eqref{eq:digit-exp} implies that
\[
\{B_{j}\psi\}\geq \frac{2}{3} + \frac{1}{9} = \frac{7}{9} > \frac{3}{4},
\]
a contradiction to \eqref{Relation:Range-psi}. Therefore, we conclude the lemma.
\end{proof}

\begin{lemma}\label{Lemma:Liouville}
Let $\psi$, $m$, $(b_j)_{1\leq j\leq m}$, and $(a_j)_{1\leq j\leq m}$ be as in Lemma~\ref{Lemma:digit-restriction}. Let $\mu$ be the largest positive integer in $[1,m]$ such that $a_j=1$ for all $j\in[1,\mu]$, where $\mu=0$ if $a_1\neq 1$. Then, we have 
\begin{equation}\label{ineq:LiouvilleBound}
0\leq |2B_1 \psi - p |\leq  3^{-\mu+1}
\end{equation}
for some $p\in \mathbb{Z}$. Furthermore, if $\psi$ satisfies \eqref{ineq:non-Liouville} for some $\lambda>0$, then 
\[
\mu \leq \lambda \frac{\log B_1}{\log 3} +O(1),
\]
where the implicit constant depends only on $\lambda$.
\end{lemma}

\begin{proof} The lemma is clear for $\mu\in \{0,1\}$, and so we may assume that $\mu\ge 2$. By Lemma~\ref{Lemma:finiteT-expansion}, we obtain 
\[
\{B_1 \psi\}= \frac{1}{3} + \frac{1}{3^2} + \cdots +\frac{1}{3^{\mu-1}} + \frac{\psi'}{3^{\mu-1}}
\]
for some $\psi'\in [0,1)$. By the formula for geometric sums, we observe that
\[
\frac{1}{3} + \frac{1}{3^2} + \cdots +\frac{1}{3^{\mu-1}}=\frac{1}{3}\cdot \frac{1-(\frac{1}{3})^{\mu-1}}{1-\frac{1}{3}} = \frac{1}{2} - \frac{3}{2} \cdot \frac{1}{3^{\mu}},
\]
which implies \eqref{ineq:LiouvilleBound}. Furthermore, let us suppose that $\psi$ satisfies \eqref{ineq:non-Liouville} for some $\lambda>0$. Then, by applying \eqref{ineq:non-Liouville} with $N\coloneqq \max(2B_1,p)$, $q\coloneqq 2B_1$, and $p\coloneqq p$, we obtain  
\[
\max(2B_1, p)^{-\lambda}  \leq |2B_1 \psi - p |\leq 3^{-\mu+1}.
\]
By taking the logarithm on both sides and $p\ll B_1$, we have  
\[
\mu \leq \lambda \frac{\log \max(2B_1, p) }{\log 3}+1=\lambda \frac{\log B_1 }{\log 3}+O_\lambda (1). \qedhere
\]
\end{proof}

Hereinafter, we suppose that $(c_k)_{k\ge 1}$ is a sequence of positive integers satisfying \eqref{D1} and \eqref{D2} in Theorem~\ref{Theorem:Main4}. In addition, we suppose that $c_{k+1}=3$ for infinitely many $k$.  Let $\theta$ be a real number in $[1/2,1)$ satisfying \eqref{dag}. Then, $(c_k)_{k\ge 1}$ satisfies \eqref{G1} to \eqref{G4} in Proposition~\ref{Proposition:General}. Thus, $\xi=\xi(C_k)$ exists. We also obtain \eqref{ineq:key1-assump1} similarly to the proof of Theorem~\ref{Theorem:Main2}. We now assume that $\xi$ is algebraic, which will lead to a contradiction in the proof of Theorems~\ref{Theorem:Main5} and \ref{Theorem:Main4}.  

By Proposition~\ref{Proposition:General}, we have \eqref{Result:General:1} to \eqref{Result:General:4}. Therefore, by choosing $\theta=21/40$, we have 
\[
I=\{k\in \mathbb{N}\mid c_{k+1}\ge 1/(1-\theta)+\epsilon\}=\{k\in \mathbb{N}\mid c_{k+1}\ge 3\}.
\]
Then, there exists $g\in \mathbb{N}$ such that for every sufficiently large $k\in I$, we have the following properties:
\begin{enumerate}
\renewcommand{\theenumi}{\roman{enumi}'}
\renewcommand{\labelenumi}{(\theenumi)}
\item \label{i'}$\{\xi^{C_k}\}\ll \xi^{-C_k((1-\theta)c_{k+1}-1)}$;
\item \label{ii'}$\beta=\xi^g$ is a Pisot number of degree $\ell\in \{2,3\}$, and $g \mid C_k$;
\item \label{iii'}if $\ell=2$, then $C_k/g$ is an odd prime number;
\item \label{iv'}$\mathrm{Tr}(\beta^{C_k/g}) = \lfloor \beta^{C_k/g} \rfloor$.
\end{enumerate}
It is clear that $\ell=3$ from \eqref{iii'} since $C_k/g$ is not a prime number for sufficiently large $k\in I$. Therefore, $\beta$ is a cubic Pisot number.

Let $B_k=C_k/g$ for every $k\ge 1$. Then, $B_k$ is an integer for every sufficiently large $k\ge 1$ by \eqref{ii'}. Let $\beta_1=\beta$, $\beta_2$, $\beta_3$ be all the conjugates of $\beta$. We recall that $|\beta_2|<1$ and $|\beta_3|<1$ since $\beta$ is a Pisot number. Let $k$ be a sufficiently large variable running over $I$. Then,  \eqref{iv'} leads to
\begin{equation}\label{ineq:positive-fractional-part}
0\leq \{\beta_1^{B_k}\}= -\beta_2^{B_k}-\beta_3^{B_k}.
\end{equation}

\begin{proof}[Proof of Theorem~\ref{Theorem:Main4}] In this proof, we further suppose \eqref{D3} and \eqref{D4}.  

If $\beta_2$ is a real number, then $\beta_3$ is so. By \eqref{D3}, $B_k$ is even. Therefore, $\beta_2^{B_k}$ and $\beta_3^{B_k}$ are positive real numbers, a contradiction to \eqref{ineq:positive-fractional-part}. 

If $\beta_2$ is not a real number, then let $\beta_2=|\beta_2|e^{2\pi i \phi}$, where $\phi\in (0,1)$. This leads to $\beta_3=|\beta_2|e^{-2\pi i \phi}$. Therefore, \eqref{ineq:positive-fractional-part} implies that 
\begin{equation}\label{ineq:positive-fractional-part2}
0\leq -|\beta_2|^{B_k}(e^{2\pi i B_k \phi} +e^{-2\pi i B_k \phi})=-2 |\beta_2|^{B_k}\cos(2\pi B_k \phi),
\end{equation}
and hence
\begin{equation}\label{Relation:Range-phi}
\{B_k \phi\} \in [1/4,3/4]
\end{equation}
for every sufficiently large $k\in I$.  Let $k_1$ be a sufficiently large positive integer with $c_{k_1}=2$ and $c_{k_1+1}=3$. Then, by \eqref{D2} and \eqref{D3}, there exists a sequence $(\nu_{k})_{k\ge 1}$ of non-negative integers such that
\[
(c_k)_{k\ge k_1}= 2\ 3^{\nu_{1}}\ 2\ 3^{\nu_{2}}\ 2\ \cdots. 
\]
Take a sufficiently large positive integer $N\ge 1$ with $\nu_{N+1}\ge 3$. We note that such $N$ exists by \eqref{D4}. Let $t=k_1-1+\sum_{k=1}^N (\nu_{k}+1)$, and let $m=\nu_{N+1}$. Then, $c_{t+j+1}=3$ holds for every $j\in [1,m-1]$, and thus  
\[
\{B_{t+j}\phi\}\in [1/4,3/4] \quad (j\in [1,m-1]). 
\]
By Lemma~\ref{Lemma:digit-restriction} with $\psi\coloneqq \phi$, $b_1\coloneqq B_{t+1}$, $m\coloneqq m$, we obtain 
\begin{equation}\label{Relation:a}
(a_{t+j+1},a_{t+j+2})\in\{02, 10, 11,20\}
\end{equation}
for every $j\in [1, m-2]$, where $(a_{t+j+1})_{1\leq j\leq m-1}$ denotes as in Lemma~\ref{Lemma:finiteT-expansion} with $(b_j)_{1\leq j\leq m} \coloneqq (B_{t+1}, 3, \ldots, 3)$ and $x\coloneqq \phi$. 
Let $\lambda$ be as in Lemma~\ref{Lemma:non-Liouville} with $\psi\coloneqq \phi$.

Let $\mu$ be the largest integer such that $a_{t+j+1}=1$ for every $j\in [1, \mu]$, where we define $\mu=0$ if $a_{t+j+1}\neq 1$. Then, Lemma~\ref{Lemma:Liouville} with $B_1\coloneqq B_{t+1}$ implies that
\[
\mu \leq \lambda \frac{\log B_{t+1}}{\log 3} +O(1).
\]
By the definition of $\mu$ and \eqref{Relation:a}, we obtain $(a_{t+j+1})_{\mu+1\leq i\leq m-1} =  02020\cdots$,  and hence
\[
\{B_{t+\mu+1} \phi\}= \frac{1}{4}+O(3^{-(m-\mu)} ).
\]
We find integers $p$ and $q$ such that $\max(|p|,|q|)\ll B_{t+\mu+1}$ and
$|q\phi -p|\ll 3^{-(m-\mu)}$. Furthermore, by  Lemma~\ref{Lemma:non-Liouville}, we have
\[
B_{t+\mu+1}^{-\lambda} \ll |q\phi -p|\ll 3^{-(m-\mu)}.
\] 
By taking logarithms, we have
\[
m-\mu \leq \frac{ \lambda \log B_{t+\mu+1} }{\log 3}, 
\]
and the right-hand side is less than or equal to 
\[
\lambda \left(\mu + \sum_{k=1}^N \nu_k + \frac{\log 2}{\log 3} N +O(1)\right)
\]
since  $t=k_1-1+\sum_{k=1}^N (\nu_{k}+1)$ and $ B_{t+\mu+1}=B_{k_1-1}\cdot 2\cdot 3^{\nu_1} \cdots  2\cdot 3^{\nu_N}2 \cdot 3^{\mu} $. Recalling $m=\nu_{N+1}$, we obtain
\[
\nu_{N+1} \leq (\lambda^2+2\lambda)  \left(\sum_{k=1}^N \nu_k + \frac{\log 2}{\log 3} N \right)+O(1),
\]
a contradiction to \eqref{D4} by taking $\limsup_{N\to \infty}$. Therefore, $\xi$ is transcendental. 
\end{proof}

\begin{proof}[Proof of Theorem~\ref{Theorem:Main5}] 
Instead of \eqref{D3} and \eqref{D4}, we suppose that there exists $k_0>0$ such that for every $k\ge k_0$, we have $c_{k+1}=3$. Then, $I=\{k\in \mathbb{N} \colon k\ge k_0 \}$ 

If $\beta_2$ is not a real number, then let $\beta_2=|\beta_2|e^{2\pi i \phi}$, where $\phi\in (0,1)$. Similarly with the proof of Theorem~\ref{Theorem:Main4}, there exists $k_1>0$ such that for every $k\ge k_1\ge k_0$, we obtain
\begin{equation}\label{Relation:Range-phi2}
\{B_k \phi\} \in [1/4,3/4].
\end{equation}
By Lemma~\ref{Lemma:digit-restriction} with $\psi\coloneqq \phi$, $b_1\coloneqq B_{k_1}$,  we obtain 
\[
(a_{k_1+j},a_{k_1+j+1})\in\{02, 10, 11,20\} 
\]
for every positive integer $j$. Therefore, $\phi$ must be a rational number since $(a_{k})_{k\ge 1}$ is ultimately periodic and $B_{k+1}/B_k=c_{k+1}=3$ for every $k\ge k_1$. This contradicts Lemma~\ref{Lemma:Mignotte}, and hence $\beta_2$ and $\beta_3$ are real numbers.

We note that Lemma~\ref{Lemma:Smyth} implies that $|\beta_2|>|\beta_3|$. Let $\theta$ be a real number in $(1/2,2/3)$ satisfying \eqref{dag}. Then by combining \eqref{ineq:positive-fractional-part} and \eqref{i'}, we have
\[
 |\beta_2|^{B_k} \left(1- \left(\frac{|\beta_3|}{|\beta_2|}\right)^{B_k}\right)\leq  \|\beta_1^{B_k} \|\ll \beta_1^{(3\theta-2)B_k}. 
\]
This implies that $|\beta_2|\leq  \beta_1^{3\theta-2}$ by taking logarithms on both sides and $k\to \infty$. Therefore, by setting $\eta= \frac{\log |\beta_3|}{\log |\beta_2|}>1$, we have
\[
1\leq |\beta_1\beta_2\beta_3|= |\beta_1| |\beta_2|^{1+\eta}  \leq |\beta_1|^{1+ (3\theta-2)(1+\eta)},
\]
which yields that $1+ (3\theta-2)(1+\eta)\ge 0$. It is equivalent to $\eta\leq (3\theta-1)/(2-3\theta)$, and hence 
\begin{equation}\label{Inequality:beta2beta3}
|\beta_3|<-\beta_2 \leq \min (|\beta_3|^{(2-3\theta)/(3\theta-1)}, \beta_1^{3\theta-2})
\end{equation}
where $|\beta_2|=-\beta_2$ by \eqref{ineq:positive-fractional-part}. Therefore, we conclude \eqref{Inequality:Theorem:Main5} since  \eqref{dag} is true for $\theta=21/40$ by Theorem~\ref{Theorem:BHP}.

Furthermore, let us assume that  \eqref{dag} is true for every fixed $\theta>1/2$.  We now choose a real number $\theta$ satisfying
\begin{equation}\label{Inequality:Choosetheta}
\frac{1}{2} < \theta < \frac{1}{2} + \frac{1}{3}\left(\frac{1}{2} - \frac{1}{1+\eta} \right)= \frac{1}{3}\left(2-\frac{1}{1+\eta}\right).
\end{equation}
Then, by \eqref{Inequality:Choosetheta}, we observe that 
\[
\frac{2-3\theta}{3\theta-1}=\frac{1}{3\theta-1}-1> \frac{1}{1- \frac{1}{1+\eta}}-1=\frac{1}{\eta}=\frac{\log|\beta_2|}{\log|\beta_3|}.
\]
Combining this and \eqref{Inequality:beta2beta3} implies that 
\[
|\beta_2 |\leq |\beta_3|^{(2-3\theta)/(3\theta-1)}< |\beta_3|^ {(\log|\beta_2|)/(\log|\beta_3|)}=|\beta_2|, 
\]
a contradiction. Therefore, $\xi$ is transcendental. 
\end{proof}

\appendix

\section{Proof of Corollary \ref{Corollary:AssumeRH}}\label{AppendixA}

\begin{lemma}\label{Lemma:Akiyama}
There are exactly $20$ cubic Pisot numbers less than or equal to $3$. Let $\alpha_1,\ldots, \alpha_{20}$ be such Pisot numbers labeled so that $1<\alpha_1<\cdots< \alpha_{20} <3$. Then, we obtain the list of $\alpha_j$ $(j=1,2,\ldots, 20)$ as Table~\ref{Table:Pisot}. 
\end{lemma}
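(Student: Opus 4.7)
The plan is to classify cubic Pisot numbers in $(1,3]$ by enumerating their minimal polynomials. Let $\alpha$ be such a number with conjugates $\alpha_2,\alpha_3$ in the open unit disk, and write the minimal polynomial as $P(X)=X^3+aX^2+bX+c\in\mathbb{Z}[X]$. Vieta's formulas combined with $1<\alpha\le 3$ and $|\alpha_2|,|\alpha_3|<1$ give
\[
|a|\le \alpha+|\alpha_2|+|\alpha_3|<5,\quad |b|\le 2\alpha+1\le 7,\quad |c|=\alpha|\alpha_2\alpha_3|<3,
\]
and $c\neq 0$, since $c=0$ would force $0$ to be a root. Hence $a\in\{-4,\dots,4\}$, $b\in\{-6,\dots,6\}$, $c\in\{\pm 1,\pm 2\}$, which is a finite list of candidate triples.

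For each candidate I would perform two tests. First, check irreducibility of $P$ over $\mathbb{Q}$ via the rational root theorem: any rational root divides $c$, so only $\pm 1, \pm 2$ need be substituted. Second, for each irreducible candidate, verify the Pisot condition by numerically locating the real roots of $P$ and checking that exactly one of them lies in $(1,3]$ while the remaining two roots (possibly complex conjugate) have modulus less than $1$. The latter can be certified rigorously by applying the Schur--Cohn criterion to $P(X)/(X-\alpha)$, or by using $|\alpha_2\alpha_3|=|c|/\alpha$ together with an interval-arithmetic bound on $\alpha_2+\alpha_3=-a-\alpha$.

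Executing this enumeration yields precisely twenty polynomials. Sorting their distinguished real roots in increasing order produces Table~\ref{Table:Pisot}; the smallest entry is the plastic number, namely the unique real root of $X^3-X-1$ (approximately $1.3247$), which is also the smallest Pisot number of any degree. The main obstacle is purely bookkeeping: one must iterate over all several hundred triples without missing or double-counting any admissible polynomial, and must also ensure that distinct triples with the same dominant root are not counted twice (which does not actually occur here, since the minimal polynomial is unique). This finite verification is exactly what Akiyama carried out in the reference cited in the statement, and we simply reproduce the resulting list.
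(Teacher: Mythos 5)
Your plan is correct in outline and would arrive at the same list, but it takes a genuinely different route from the paper at the critical verification step. You propose, for each candidate polynomial, to (i) test irreducibility by the rational-root theorem and (ii) certify the Pisot condition by rigorous numerics — locating the real roots and bounding the remaining conjugates via Schur--Cohn or interval arithmetic applied to $P(X)/(X-\alpha)$. The paper instead invokes Akiyama's Lemma~1, which characterizes the minimal polynomials of cubic Pisot numbers by two purely arithmetic inequalities on the integer coefficients: writing $f(X)=X^3-a_2X^2-a_1X-a_0$, the polynomial $f$ is the minimal polynomial of a cubic Pisot number if and only if
\[
|a_1-1|<a_2+a_0 \quad\text{and}\quad a_0^2-a_1<\mathrm{sgn}(a_0)\,(1+a_2a_0).
\]
This replaces your rigorous-numerics step (and the separate irreducibility check, which is subsumed in Akiyama's criterion) with a simple integer computation, which is cleaner to certify by hand or by the Mathematica script the paper supplies. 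A secondary difference: the paper exploits the one-sided bound $a_2=\alpha+\alpha'+\alpha''>-1$, so $a_2\in\{0,\ldots,4\}$, whereas you use only $|a|<5$ (i.e.\ $a\in\{-4,\ldots,4\}$ in your sign convention), which roughly doubles the candidate list — harmless but wasteful. Both approaches reduce to a finite exhaustive check and yield the twenty polynomials of Table~\ref{Table:Pisot}.
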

\begin{proof}
Let $M$ be a given positive integer. If $\alpha$ is a cubic Pisot number with $1<\alpha \leq M$, then the minimal polynomial $f(X)$ of $\alpha$ over $\mathbb{Q}$ forms as 
\[
f(X) =X^3-a_2 X^2 -a_1 X-a_0,\quad (a_2,a_1,a_0\in \mathbb{Z},\ a_0 \neq 0 ).
\]
Let $\alpha'$ and $\alpha''$ be the conjugates of $\alpha$. Then, we have $|\alpha'|<1$ and $|\alpha''|<1$ since $\alpha$ is a Pisot number. Therefore, the relation between roots and coefficients implies that 
\begin{gather*}
a_2=\alpha+\alpha'+ \alpha''\in (-1 ,M+2), \\
|a_1|=|\alpha\alpha'+ \alpha'\alpha''+\alpha\alpha''| <2M+1,\\
0<|a_0|=|\alpha \alpha'\alpha''| <M.    
\end{gather*}
By substituting $M=3$, we have $a_2 \in [0,4]$, $|a_1|\leq 6$, and $1\leq |a_0|\leq 2$. Thus, there are $5\cdot 13\cdot 4=260$ candidates of polynomials whose roots contain a cubic Pisot number. By Akiyama's result  \cite[Lemma~1]{Akiyama}, $f(X)$ is the minimal polynomial of some cubic Pisot number if and only if  
\begin{equation}\label{equation-Akiyama}
|a_1-1|<a_{2}+a_0\quad \text{and}\quad  a_0^2- a_1 < \mathrm{sgn}(a_0)(1+ a_2a_0),
\end{equation}
where $\mathrm{sgn}(x)=1$ if $x>0$; $\mathrm{sgn}(x)=-1$ if $x<0$. Using Mathematica, for all  $a_2 \in [0,4]$, $|a_1|\leq 6$, and $1\leq |a_0|\leq 2$, we check whether $(a_0,a_1,a_2)$ satisfies \eqref{equation-Akiyama}. After that, we examine whether all real roots of $f(X)$ are less than or equal to $3$, and obtain Table~\ref{Table:Pisot}. 
\end{proof}

\begin{center}
\begin{table}[htbp]\renewcommand{\arraystretch}{1.2}
\begin{tabular}{c|c|c}
$j$ & $\alpha_j$ & Minimal polynomial of $\alpha_j$  \\ \hline\hline
1 & $1.32471795724474602596\cdots$ & $X^3  -X -1$ \\ \hline
2 & $1.46557123187676802665\cdots$ & $X^3 -X^2  -1$ \\ \hline
3 & $1.75487766624669276004\cdots$ & $X^3 -2X^2 +X -1$ \\ \hline
4 & $1.83928675521416113255\cdots$ & $X^3 -X^2 -X -1$ \\ \hline
5 & $2.14789903570478735402\cdots$ & $X^3 -X^2 -2X -1$ \\ \hline
6 & $2.20556943040059031170\cdots$ & $X^3 -2X^2  -1$ \\ \hline
7 & $2.24697960371746706105\cdots$ & $X^3 -2X^2 -X +1$ \\ \hline
8 & $2.26953084208114277085\cdots$ & $X^3 -X^2 -2X -2$ \\ \hline
9 & $2.32471795724474602596\cdots$ & $X^3 -3X^2+ 2X -1$ \\ \hline
10 & $2.35930408597177642073\cdots$ & $X^3 -2X^2  -2$ \\ \hline
11 & $2.51154714169453198401\cdots$ & $X^3 -X^2 -3X -2$ \\ \hline
12 & $2.52137970680456756960\cdots$ & $X^3 -3X^2 +2X -2$ \\ \hline
13 & $2.54681827688408207913\cdots$ & $X^3 -2X^2 -X -1$ \\ \hline
14 & $2.65896708191699407934\cdots$ & $X^3 -2X^2 -X -2$ \\ \hline
15 & $2.76929235423863141524\cdots$ & $X^3 -3X^2 +X -1$ \\ \hline
16 & $2.83117720720833690413\cdots$ & $X^3 -2X^2 -2X -1$ \\ \hline
17 & $2.83928675521416113255\cdots$ & $X^3 -4X^2 +4X -2$ \\ \hline
18 & $2.87938524157181676810\cdots$ & $X^3 -3X^2  +1$ \\ \hline
19 & $2.89328919630449778890\cdots$ & $X^3 -3X^2 +X -2$ \\ \hline
20 & $2.91963956583941814511\cdots$ & $X^3 -2X^2 -2X -2$ \\ \hline
\end{tabular}
\vspace{5pt}
\caption{All cubic Pisot numbers $\leq 3$}\label{Table:Pisot}
\end{table}
\end{center}

\begin{proof}[Proof of Corollary~\ref{Corollary:AssumeRH}] 
Assume that \eqref{P1} is true for $x_0=1$. Similarly with the proof of \eqref{TypeD} in Theorem~\ref{Theorem:main:example}, either $\xi=\xi(r3^k-1)$ is transcendental or there exists $g\in \{1,2\}$ such that $\xi^g$ is a cubic Pisot number satisfying 
\[
\beta=\xi^g\leq (2x_0^{1/c_2} +1 )^{g/c_1}=3^{g/(3r-1)},
\]
where $g\in \{1,2\}$. Assume that $\xi$ is algebraic. Then, in the case $r\geq 3$, we have 
\[
\beta\leq 3^{2/(3r-1)}\leq 3^{1/4}<1.320<1.3247< \kappa, 
\]
a contradiction since $\kappa$ is the smallest Pisot number. Therefore, $\xi=\xi(r3^k-1)$ is transcendental for all $r\geq 3$.  

In the case $r=2$, we have $g=1$ by \eqref{equation:typeC:1}. Therefore, we obtain
\[
\beta\leq  3^{1/(3\cdot 2-1)}=3^{1/5}=1.2457\cdots < 1.3247 < \kappa,
\]
a contradiction.

In the case $r=1$ and $g=1$, we see that 
\[
\beta=\xi^g(=\xi)\leq  3^{1/(3-1)}=3^{1/2}=1.732\cdots < 1.754 < \alpha_3.
\]
This yileds that $\xi\in \{\alpha_1,\alpha_2\}$. By the definition of $\xi$, both $\lfloor \xi^{3^1-1}\rfloor$ and $\lfloor \xi^{3^2-1}\rfloor$ are prime numbers, but we observe that 
\[
\lfloor \alpha_1^{3^1-1} \rfloor = 1,
\quad
\lfloor \alpha_2^{3^2-1} \rfloor = 21,
\]
a contradiction. 

In the case $r=1$ and $g=2$, we see that 
\[
\beta=\xi^g(=\xi^2)\leq  3^{2/(3-1)}=3,
\]
and hence $\xi\in \{\alpha_1^{1/2},\alpha_2^{1/2},\ldots, \alpha_{20}^{1/2} \}$. Since $\lfloor (\alpha_j^{1/2})^{3^1-1} \rfloor=1$ for all $j\in \{1,2,3,4\}$, we have $\xi\in \{\alpha_5^{1/2},\alpha_6^{1/2},\ldots , \alpha_{20}^{1/2} \}$. Let us recall that $\lfloor \xi^{3^k-1} \rfloor$  is a prime number for every $k\in \mathbb{N}$, but we observe that
\begin{align*}
\lfloor(\alpha_5^{1/2})^{3^2 -1}\rfloor&=21=3\cdot 7,&
\lfloor(\alpha_6^{1/2})^{3^3 -1}\rfloor&=29226=2\cdot 3\cdot 4871,\\
\lfloor(\alpha_7^{1/2})^{3^2 -1}\rfloor&=25=5^2,&
\lfloor(\alpha_8^{1/2})^{3^2 -1}\rfloor&=26=2\cdot 13,\\
\lfloor(\alpha_9^{1/2})^{3^4 -1}\rfloor&=2\cdot 3\cdot 43\cdot 1750616861141, &
\lfloor(\alpha_{10}^{1/2})^{3^2 -1}\rfloor&=30=2\cdot 3\cdot 5,\\
\lfloor(\alpha_{11}^{1/2})^{3^2 -1}\rfloor&=39=3\cdot 13,&
\lfloor(\alpha_{12}^{1/2})^{3^2 -1}\rfloor&=40=2^3\cdot 5, \\
\lfloor(\alpha_{13}^{1/2})^{3^2 -1}\rfloor&=42=2\cdot 3\cdot 7,& 
\lfloor(\alpha_{14}^{1/2})^{3^2 -1}\rfloor&=49=7^2,\\
\lfloor(\alpha_{15}^{1/2})^{3^2 -1}\rfloor&=58=2\cdot 29,&
\lfloor(\alpha_{16}^{1/2})^{3^2 -1}\rfloor&=64=2^6,\\
\lfloor(\alpha_{17}^{1/2})^{3^2 -1}\rfloor&=64=2^6,&
\lfloor(\alpha_{18}^{1/2})^{3^2 -1}\rfloor&=68=2^2\cdot 17,\\
\lfloor(\alpha_{19}^{1/2})^{3^2 -1}\rfloor&=70=2\cdot 5\cdot 7,&
\lfloor(\alpha_{20}^{1/2})^{3^2 -1}\rfloor&=72=2^3\cdot 3^2.
\end{align*}
All of them are composite numbers, a contradiction. 
\end{proof}

We give a program using Mathematica to obtain a list of all cubic Pisot numbers up to a given positive real number $M$. We apply it to describe Table~\ref{Table:Pisot}. %It is based on the proof of Lemma~\ref{Lemma:Akiyama}.\\

\begin{verbatim}
(*Input M as a positive real number*) 
M = Input["Enter a positive real number"];

(*Fix a range of coefficients (a0,a1,a2)*)
ceilM = Ceiling[M];
coeffs = 
  Flatten[Table[{a0, a1, a2}, {a0, -ceilM + 1, 
     ceilM - 1}, {a1, -2*ceilM, 2*ceilM}, {a2, 0, ceilM + 1}], 2];
coeffs = DeleteCases[coeffs, {0, _, _}];

(*Select polynomials f(x)=x^3-a2*x^2-a1*x-a0 of which all real roots are 
less than or equal to M, and make a list of {alpha, a0, a1, a2}, 
where alpha is the maxmum real root of f(x) (it should be a Pisot number)*)
list = {};
max = Length[pisotcoeffs];
For[j = 1, j <= max, j++,
  a0 = pisotcoeffs[[j, 1]];  
  a1 = pisotcoeffs[[j, 2]]; 
  a2 = pisotcoeffs[[j, 3]];  
  poly = x^3 - a2*x^2 - a1*x - a0;
  realroots = x /. NSolve[poly == 0, x, Reals, WorkingPrecision -> 50];
	If[Select[realroots, # > M &] == {},   
	list = Join[list, {{Max[realroots], a0, a1, a2}}] ];
];

(*Sort the list*)
list = SortBy[list, First];
max = Length[list];

(*Print*)
Print[StringForm[
   "The number of cubic Pisot numbers less than or equal to `` is 
exactly ``, and the list is as follows:\n
a cubic Pisot number, its minimal polynomial", M, max]];

For[j = 1, j <= max, j++,
 alpha = list[[j, 1]]; 
 a0 = list[[j, 2]]; a1 = list[[j, 3]]; a2 = list[[j, 4]];
 poly = x^3 - a2*x^2 - a1*x - a0;
 Print[StringForm[" alpha_`` = `` , ``", j, NumberForm[alpha, 25], 
   TraditionalForm[poly]]]
 ]
\end{verbatim}

\section*{Acknowledgement}
The author is grateful to Prof.~Art\={u}ras Dubickas and Prof.~Pieter Allaart for their helpful comments and advice. The author would like to thank Prof. Hajime Kaneko for the invitation to speak at Numeration and Substitution 2025 at the University of Tsukuba. The author had valuable conversations concerning Mills' constant with the participants at the conference. The author would like to thank Prof. Wataru Takeda for his encouraging comments and for finding some typos. The author is also grateful to Daniel Johnston for pointing out the best-known result on \eqref{P1}. The author was supported by JSPS KAKENHI Grant Number JP25K17223.

\end{document}